\numberwithin{equation}{section}
\newcommand{\aug}{\text{Aug}}
\newcommand{\augrep}{KCH representation }
\newcommand{\augreps}{KCH representations }
\begin{document}

\title[KCH representations, augmentations, and $A$-polynomials]{KCH representations, augmentations, and $A$-polynomials}
\author[Christopher R. Cornwell]{Christopher R. Cornwell}

\begin{abstract}
We describe a correspondence between augmentations of knot contact homology and certain representations of the knot group. The correspondence makes the 2-variable augmentation polynomial into a generalization of the classical $A$-polynomial. It also associates to an augmentation a rank, which is bounded by the bridge number and shares its behavior under connect sums. We also study augmentations with rank equal to the braid index.
\end{abstract}

\maketitle

\section{Introduction}

Let $K$ be a knot in $\rls^3$ and denote by $\pi_K$ the fundamental group of the complement $\overline{\rls^3\setminus n(K)}$. Define an element of $\pi_K$ to be a \emph{meridian of} $K$ if it may be represented by the boundary of a disk $D$ that is embedded in $\rls^3$ and intersects $K$ at one point in the interior of $D$. Fix a field $\bb F$.

\begin{defn} If $V$ is an $\bb F$-vector space, a homomorphism $\rho:\pi_K\to\text{GL}(V)$ is a \emph{\augrep of $\pi_K$} if for a meridian $m$ of $K$, $\rho(m)$ is diagonalizable and has an eigenvalue of 1 with multiplicity $\dim V-1$. We call $\rho$ a \emph{KCH irrep} if it is irreducible as a representation.
\label{DefKCHrep}
\end{defn}

Describing the \emph{knot contact homology} $HC_*(K)$ is somewhat involved and we delay its definition until Section \ref{SecBG}. However, we remark here that $HC_*(K)$ is a non-commutative graded algebra over $\ints[U^{\pm1},\lambda^{\pm1},\mu^{\pm1}]$, and is defined as the homology of a certain differential graded algebra $(\cl A_K,\bd_K)$. An \emph{augmentation} is a graded algebra map $\epsilon:\cl A_K\to\bb F$ such that $\epsilon\circ\bd=0$ (and $\epsilon(1)=1$), where $\bb F$ has grading 0. This paper deals with the specialization of $HC_*(K)$ to an $\ints[\lambda^{\pm1},\mu^{\pm1}]$-algebra where we set $U=1$. In this case $HC_0(K)$ is isomorphic to the cord algebra $\cl C_K$ introduced in \cite{Ng08}, and we may view augmentations as homomorphisms $\epsilon:\cl C_K\to\bb F$.

It is discussed in \cite{NgSurv12} how to associate an augmentation to a KCH representation, giving a correspondence
\begin{equation}
\text{
\begin{tabular}{c c c}
$\setn{\rho:\pi_K\to\text{GL}(V)}{\rho\text{ is a KCH irrep}}$ &$\to$ & $\setn{\epsilon:\cl C_K\to\bb F}{\epsilon(\mu)\ne1}$\\
$\rho$ &$\mapsto$ &$\epsilon_\rho$
\end{tabular}
}
\label{EqnCorr}
\end{equation}

If $\rho,\rho'$ are conjugate then $\epsilon_\rho = \epsilon_{\rho'}$ (see Remark \ref{RemConjugateReps}). Our primary result shows this correspondence to be surjective.

\begin{thm} Let $\epsilon:\cl C_K\to\bb F$ be an augmentation such that $\epsilon(\mu)\ne1$. Then a KCH irrep $\rho:\pi_K\to\text{GL}(V)$ can be constructed explicitly from $\epsilon$ with the property that $\epsilon_\rho = \epsilon$. Moreover, any KCH irrep that induces $\epsilon$ is isomorphic to $(V,\rho)$.
\label{ThmMain}
\end{thm}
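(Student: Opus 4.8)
The plan is to model a KCH irrep by linear-algebraic \emph{meridian data} and then to recover that data from the cord values recorded by $\epsilon$. I would begin from a presentation of $\pi_K$ by meridians $x_1,\dots,x_n$ (from a Wirtinger or braid diagram). Since $\epsilon(\mu)\ne1$, set $\mu=\epsilon(\mu)$; then any KCH irrep $\rho$ must send each generator to a matrix $\rho(x_i)=I+(\mu-1)\,v_i\phi_i/\phi_i(v_i)$, where $v_i\in V$ spans the $\mu$-eigenline of $\rho(x_i)$, $\phi_i\in V^{*}$ cuts out its $1$-eigenspace, and $\phi_i(v_i)\ne0$ (this is where $\mu\ne1$ enters, making $\rho(x_i)-I$ genuinely rank one). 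Thus a KCH irrep amounts to a tuple $(v_i,\phi_i)$ satisfying the matrix relations coming from the relators of $\pi_K$; irreducibility forces the $v_i$ to span $V$ and the $\phi_i$ to span $V^{*}$, each failure producing an obvious invariant subspace.

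Next I would introduce the $n\times n$ \emph{cord matrix} $M$ with entries $M_{ij}=\phi_i(v_j)$ and identify these with the augmentation values of $\epsilon_\rho$ on the generating cords joining strand $i$ to strand $j$; under the isomorphism $HC_0(K)\cong\cl C_K$ this identification is essentially the definition of $\epsilon_\rho$. Factoring $M$ as the product of the $n\times d$ matrix of covectors with the $d\times n$ matrix of vectors (where $d=\dim V$) gives $\operatorname{rank}M\le d$, while the two spanning conditions force equality, so $\dim V=\operatorname{rank}M$. Since the entries of $M$ are determined by $\epsilon$ alone, this settles the uniqueness clause at once: any KCH irrep $\rho'$ with $\epsilon_{\rho'}=\epsilon$ satisfies $\dim V'=\operatorname{rank}M=\dim V$.

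For surjectivity I would run the argument backwards. Given $\epsilon$, assemble $M$ from the cord values, put $d=\operatorname{rank}M$, and factor $M=\Phi\Theta$ with $\Phi$ of size $n\times d$ and $\Theta$ of size $d\times n$, both of rank $d$; the rows of $\Phi$ furnish covectors $\phi_i$ and the columns of $\Theta$ furnish vectors $v_i$ on $V=\bb F^{\,d}$, and I would set $\rho(x_i)=I+(\mu-1)\,v_i\phi_i/\phi_i(v_i)$, the diagonal entries $\phi_i(v_i)=M_{ii}$ being nonzero by a cord relation together with $\mu\ne1$. This is well defined up to conjugation because the only freedom in the factorization is the $\mathrm{GL}_d$-action, which corresponds to a change of basis of $V$. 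The main obstacle is to check that $\rho$ descends to a homomorphism on $\pi_K$: I must show that the relations satisfied by $\epsilon$ in $\cl C_K$ (the skein relations among cords together with the meridian--longitude relation) translate precisely into the matrix identities $\rho(r)=I$ for each relator $r$, using the rank-one form of the $\rho(x_i)$ to collapse each identity into a scalar relation among the $M_{ij}$. Two further points then remain, and both are more delicate than the spanning argument above: that the resulting $\rho$ is genuinely irreducible---which I expect to follow from the connectedness of $K$ (equivalently, that the cords tie all $n$ strands together, precluding a block-triangular degeneration of $M$)---and that $\epsilon_\rho=\epsilon$, which I would verify by running the matrix-coefficient computation of the second paragraph in reverse.
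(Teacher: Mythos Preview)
Your approach is correct and closely parallels the paper's: your cord matrix $M$ is exactly the matrix $\cl E(\Gamma)$ the paper builds from $\epsilon$ (its $(i,j)$ entry is $\epsilon([\gamma_i\gamma_j^{-1}])$ for conjugators $\gamma_i$ with $g_i=\gamma_i^{-1}m\gamma_i$), and your uniqueness argument---that any inducing irrep has dimension $\operatorname{rank}M$ because the $v_i$ span $V$ and the $\phi_i$ span $V^*$---is precisely the content of the paper's Lemma~\ref{DimWAndEpsDeg}.

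The genuine difference is in how existence is established. You factor $M=\Phi\Theta$, define $\rho$ on generators by the rank-one formula, and must then verify the group relations; you rightly flag this as the main obstacle but do not carry it out. It does work: for a Wirtinger relation $g_k=g_ig_jg_i^{-1}$ one may take $\gamma_k^{-1}=g_i\gamma_j^{-1}$, and then the cord identity $[ab]-[amb]=[a][b]$ applied to $a=\gamma_l\gamma_i^{-1}$, $b=\gamma_i\gamma_j^{-1}$ yields the column relation $M_{lk}=M_{lj}-M_{ij}M_{li}$ for all $l$; since $\Phi$ has full column rank this forces $v_k=v_j-M_{ij}v_i=\rho(g_i)v_j$, and the analogous row computation gives $\phi_k=\phi_j\rho(g_i)^{-1}$, whence $\rho(g_k)=\rho(g_i)\rho(g_j)\rho(g_i)^{-1}$. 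The paper sidesteps this entirely by first building a \emph{universal} KCH representation $\rho_\iota:\pi_K\to\text{Aut}_{\wt{\cl Q}_K}(V)$ over the localized cord algebra (Theorem~\ref{ThmGenMain}), setting $\rho_\iota(g)v_j:=[\gamma_\I g\gamma_j^{-1}]$; well-definedness is then automatic because these are honest elements of $\wt{\cl P}_K$, and one simply pushes forward through $\epsilon$. Your route is more concrete; the paper's is cleaner and works uniformly over any base ring.

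Your irreducibility sketch is also correct, though the ``block-triangular'' phrasing is imprecise (the matrix $M$ is $n\times n$, not $d\times d$). The clean argument: if $W\subsetneq V$ is invariant, then for each $i$ either $v_i\in W$ or $W\subset\ker\phi_i$; if some $v_{i_0}\in W$, conjugacy of all meridians (this is where connectedness of $K$ enters) gives every $v_j$ proportional to $\rho(h)v_{i_0}\in W$, forcing $W=V$; otherwise $W\subset\bigcap_i\ker\phi_i=0$. The paper instead obtains irreducibility indirectly: it passes to a subquotient irrep inducing $\epsilon$ (Corollary~\ref{cor:W(G)IsIrrep}), observes that its dimension is again $\operatorname{rank}\cl E(\Gamma)$ by Lemma~\ref{DimWAndEpsDeg}, and concludes no passage was needed.
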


\begin{rem}Augmentations are geometrically motivated from the fact that Legendrian DGA's are functorial under (exact Lagrangian) cobordisms, and a cobordism to a Legendrian from the empty set induces an augmentation. See Section \ref{SecP_KInducedKCH} for how the correspondence $\rho \mapsto \epsilon_\rho$ may be viewed from this perspective.
\end{rem}

\begin{rem}We state Theorem \ref{ThmMain} for a field $\bb F$ to discuss irreducible representations and for the relationship to the $A$-polynomial. However, if we drop irreducibility and choose a ring with unity $S$, any augmentation $\epsilon:\cl C_K\to S$ (which sends $1-\mu$ to a unit) is induced from a representation $\rho:\pi_K\to\text{Aut}_S(V)$ (see Theorem \ref{ThmGenMain} and Corollary \ref{Inducing KCH reps}).
\label{RemGenKCHReps}
\end{rem}

Every \augrep $\rho$ has an eigenvalue $\mu_0\ne1$ of $\rho(m)$. An eigenvector corresponding to $\mu_0$ is also an eigenvector for $\rho(\ell)$, where $\ell$ is the preferred (Seifert-framed) longitude. Let $\lambda_0$ be the corresponding eigenvalue of $\rho(\ell)$. 

Let $\bb F=\bb C$ and write $\bb C^*$ for $\bb C\setminus\{0\}$ and define the following sets in $(\bb C^*)^2$:
\[U_K = \setn{(\lambda_0,\mu_0)}{\rho:\pi_K\to\text{GL}(V)\text{ is a KCH irrep}};\]
\[V_K = \setn{(\epsilon(\lambda),\epsilon(\mu))}{\epsilon:\cl C_K\to\bb C\text{ is an augmentation}}\setminus(\bb C^*\times\{1\}).\]

It is conjectured that, for any $K$, the maximum dimensional part of the Zariski closure of $U_K$ (resp. that of $V_K$) is a complex curve. If so a polynomial in $\ints[\lambda,\mu]$ exists with zero locus the closure of $U_K$ (resp. $V_K$). This polynomial is unique (up to a sign) once repeated factors and extraneous powers of $\lambda,\mu$ are removed and coefficients are made to be coprime. 

The polynomial for $V_K$, if multiplied by $1-\mu$, is called the \emph{2-variable augmentation polynomial} $\aug_K(\lambda,\mu)$. However, as this paper only considers augmentations sending $U$ to 1, we will not encounter its 3-variable analogue and so we refer to $\aug_K(\lambda,\mu)$ simply as the \emph{augmentation polynomial} (see \cite{AENV} for an interesting conjecture that relates the 3-variable polynomial to HOMFLY-PT polynomials).

The polynomial for $U_K$, studied in \cite{C12}, is called the \emph{stable $A$-polynomial} $\wt{A}_K(\lambda,\mu)$. The terminology ``stable'' is motivated by the bound of Theorem \ref{ThmDimBound} below. The reason for ``$A$-polynomial'' is explained as follows. A KCH representation $\rho$ may be modified to an SL$(V)$ representation by multiplying $\rho$ by some 1-dimensional representation determined by $m\mapsto (\mu_0)^{-1/d}$, $d=\dim V$. The 2-dimensional KCH representations then determine the original $A$-polynomial.

While no \emph{a priori} restriction is placed on $\dim V$ in the definition of $U_K$, it was shown in \cite{C12} that $K$ itself imposes a restriction.

\begin{thm}[\cite{C12}] Let $\set{g_1,\ldots,g_r}$ be a set of meridians that generate $\pi_K$. If $\rho:\pi_K\to\text{GL}(V)$ is a KCH irrep of $\pi_K$ then $\dim V\le r$.
\label{ThmDimBound}
\end{thm}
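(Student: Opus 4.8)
The plan is to read off, from Definition \ref{DefKCHrep}, the local structure of $\rho$ at each meridian generator, and then to combine conjugacy of meridians with irreducibility to manufacture an invariant subspace whose dimension is patently at most $r$. The first step is the topological input: any two meridians of $K$ are conjugate in $\pi_K$, since a loop bounding an embedded disk meeting $K$ once is freely homotopic in the complement to the standard meridian on $\partial n(K)$, and free homotopy of loops is conjugacy in $\pi_1$. Consequently each generator $g_i$ is conjugate to the distinguished meridian $m$, so $\rho(g_i)$ is conjugate to $\rho(m)$; in particular every $\rho(g_i)$ is diagonalizable with eigenvalue $1$ of multiplicity $\dim V-1$ and a single further eigenvalue $\mu_0$, the same for all $i$.

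Write $n=\dim V$. If $\mu_0=1$ then $\rho(m)=I$, whence every $\rho(g_i)=I$ and $\rho$ has trivial image; irreducibility then forces $n=1\le r$ and there is nothing more to prove. So assume $\mu_0\ne1$ and set $A_i=\rho(g_i)-I$. Since $\rho(g_i)$ is diagonalizable with eigenvalue $1$ of multiplicity $n-1$, the kernel of $A_i$ has dimension $n-1$, so $A_i$ has rank exactly $1$ and its image is the line $L_i\subset V$ spanned by the $\mu_0$-eigenvector of $\rho(g_i)$. I would then introduce $W=L_1+\cdots+L_r$ and argue that $W$ is $\rho(\pi_K)$-invariant. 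For any $v\in W$ and any generator $g_j$ we have $\rho(g_j)v=v+A_jv$ with $A_jv\in L_j\subseteq W$, so $\rho(g_j)W\subseteq W$. For the inverse, note that $\rho(g_j)$ and $\rho(g_j)^{-1}$ are simultaneously diagonalizable, so the $\mu_0^{-1}$-eigenline of $\rho(g_j)^{-1}$ is again $L_j$; hence $\rho(g_j)^{-1}-I$ also has image $L_j\subseteq W$ and $\rho(g_j)^{-1}W\subseteq W$. As the $g_j^{\pm1}$ generate $\pi_K$, the subspace $W$ is invariant under all of $\rho(\pi_K)$.

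The conclusion then follows quickly: by irreducibility $W=0$ or $W=V$, and $W=0$ would make every $A_i=0$, i.e. $\rho$ trivial, which is reducible once $n\ge2$ (already excluded). Hence $W=V$, and therefore
\[
\dim V=\dim W\le\dim L_1+\cdots+\dim L_r=r.
\]
I expect the main obstacle to be the topological conjugacy-of-meridians claim, which must be invoked carefully to ensure the generators $g_i$ are genuinely conjugate to $m$ (and not merely homologous), since the entire eigenline bookkeeping rests on it. The secondary point requiring care is the verification of invariance under the inverse generators, which hinges on recognizing that $\rho(g_j)$ and its inverse share the eigenline $L_j$; once these two facts are in hand, the rank-one/invariant-subspace argument is purely formal.
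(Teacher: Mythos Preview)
Your argument is correct and essentially identical to the paper's. The paper defines the \emph{meridian subspace} $W(\Gamma)=\bb F\langle w_1,\ldots,w_r\rangle$ with $w_i=\rho(\gamma_i^{-1})e_1$ (where $g_i=\gamma_i^{-1}m\gamma_i$), proves in Lemma~\ref{lem:BasisEigenvecs} that it is a nonzero invariant subspace via the same computation $\rho(g_i)w_j=w_j-(\text{scalar})w_i$, and concludes by irreducibility. Your lines $L_i$ are exactly $\bb F w_i$, so your $W$ coincides with $W(\Gamma)$; the only cosmetic difference is that the paper fixes explicit conjugating elements $\gamma_i$ (because it needs the vectors $w_i$, not just the lines, for later results such as Lemma~\ref{DimWAndEpsDeg}), whereas you characterize $L_i$ intrinsically as $\operatorname{im}(\rho(g_i)-I)$.
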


The reason for the bound above is that each meridian has a distinguished 1-dimensional eigenspace and the sum of these eigenspaces is an invariant subspace. This will play an important role in the proof of Theorem \ref{ThmMain} (see Section \ref{SecMerBasis}).

Theorem \ref{ThmMain} implies that $U_K=V_K$, giving us the following corollary.

\begin{cor} Given $K\subset\rls^3$ a knot, $\aug_K(\lambda,\mu)=(1-\mu)\wt{A}_K(\lambda,\mu)$ holds up to a sign.
\label{CorAugStabA}
\end{cor}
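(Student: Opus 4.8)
The plan is to deduce the corollary from the set equality $U_K = V_K$ in $(\bb C^*)^2$, after which the statement reduces to a comparison of the reduced polynomials defining the two Zariski closures. By construction, both $\wt{A}_K$ and the polynomial cutting out $\overline{V_K}$ are the unique (up to sign) squarefree, coprime-coefficient polynomial with no extraneous powers of $\lambda,\mu$ whose zero locus is the maximal-dimensional part of the respective closure. Thus equality of the two sets forces these two polynomials to agree up to sign, and the factor $(1-\mu)$ then enters solely through the definition of $\aug_K$ as $(1-\mu)$ times the polynomial of $V_K$, yielding $\aug_K = (1-\mu)\wt{A}_K$ up to sign.

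To prove $U_K \subseteq V_K$, I would take a KCH irrep $\rho$ with distinguished data $(\lambda_0,\mu_0)$ and pass to its associated augmentation $\epsilon_\rho$ under the correspondence \eqref{EqnCorr} of \cite{NgSurv12}. The essential point is coordinate compatibility: I must check that $\epsilon_\rho(\mu)=\mu_0$ and $\epsilon_\rho(\lambda)=\lambda_0$. This follows from how $\epsilon_\rho$ is built — the augmentation reads off the nontrivial eigenvalue of $\rho(m)$ on the meridian variable, and, using that $\ell$ commutes with $m$ and hence acts as a scalar $\lambda_0$ on the one-dimensional $\mu_0$-eigenspace, reads off $\lambda_0$ on the longitude variable. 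Since $\mu_0\neq1$ by definition of a KCH irrep, the point $(\epsilon_\rho(\lambda),\epsilon_\rho(\mu))$ is not removed in forming $V_K$, so $(\lambda_0,\mu_0)\in V_K$.

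The reverse inclusion $V_K \subseteq U_K$ is exactly the content of Theorem \ref{ThmMain}: given an augmentation $\epsilon$ with $\epsilon(\mu)\neq1$, the theorem produces a KCH irrep $\rho$ with $\epsilon_\rho=\epsilon$. Applying the coordinate compatibility above to this $\rho$ gives $(\lambda_0,\mu_0)=(\epsilon(\lambda),\epsilon(\mu))$, so every point of $V_K$ lies in $U_K$. Combining the two inclusions yields $U_K = V_K$, whence $\overline{U_K}=\overline{V_K}$ together with the equality of their maximal-dimensional parts.

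I expect the main obstacle to lie in the coordinate-compatibility verification used in the forward inclusion: one must unwind the construction of $\epsilon_\rho$ from \cite{NgSurv12} to confirm that the augmentation records exactly the eigenvalue pair $(\lambda_0,\mu_0)$ and not some rescaled data — the normalization by $\dim V$ that appears when passing to an $\text{SL}(V)$ representation (described just before the corollary) is a reminder that such factors can creep in. Granting this compatibility, the factor $(1-\mu)$ in the conclusion is purely definitional, since $\aug_K$ is declared to be $(1-\mu)$ times the polynomial cutting out $\overline{V_K}$, and both $U_K$ and $V_K$ are taken within $\{\mu\neq1\}$, so that no maximal-dimensional component is created or destroyed along $\{\mu=1\}$.
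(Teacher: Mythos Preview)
Your proposal is correct and follows essentially the same approach as the paper: the paper states that Theorem~\ref{ThmMain} implies $U_K=V_K$, from which the corollary follows immediately by the definitions of the two polynomials. The coordinate compatibility you flag as a potential obstacle is exactly the content of Proposition~\ref{ThmKCHReps} in the paper (which records $\epsilon_\rho(\mu)=\mu_0$ and $\epsilon_\rho(\lambda)=\lambda_0$), so your concern is already resolved there; the $\dim V$ rescaling you mention pertains only to the passage to $\text{SL}(V)$ representations and plays no role in the construction of $\epsilon_\rho$.
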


The set $U_K$ was computed in \cite{C12} for torus knots. We may now view this as a computation of the augmentation polynomial of torus knots.

\begin{cor}
Given $0<p<q$ relatively prime, let $T(p,q)$ denote the $(p,q)${--}torus knot. Then	\[\aug_{T(p,q)}(\lambda,\mu)=(1-\mu)(\lambda\mu^{(p-1)q}+(-1)^p)\prod_{n=1}^{p-1}(\lambda^n\mu^{(n-1)pq}-1).\]
\label{CorTorusKnots}
\end{cor}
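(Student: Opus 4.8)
The plan is to derive Corollary \ref{CorTorusKnots} from the already-established results by recognizing it as a reformulation of a known computation. By Corollary \ref{CorAugStabA}, we have $\aug_K(\lambda,\mu)=(1-\mu)\wt{A}_K(\lambda,\mu)$ up to sign for every knot $K$, where $\wt{A}_K$ is the stable $A$-polynomial studied in \cite{C12}. The set $U_{T(p,q)}$ was computed explicitly in \cite{C12}, so the first step is simply to quote that computation: it should yield that the defining polynomial of the Zariski closure of $U_{T(p,q)}$ is (up to sign, repeated factors, and extraneous powers of $\lambda,\mu$) the product $(\lambda\mu^{(p-1)q}+(-1)^p)\prod_{n=1}^{p-1}(\lambda^n\mu^{(n-1)pq}-1)$. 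Substituting this into Corollary \ref{CorAugStabA} immediately gives the claimed formula for $\aug_{T(p,q)}$.

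Concretely, I would first recall how $U_K$ arises from KCH irreps: for each KCH irrep $\rho$, an eigenvalue $\mu_0\ne1$ of $\rho(m)$ and the corresponding eigenvalue $\lambda_0$ of $\rho(\ell)$ furnish a point $(\lambda_0,\mu_0)\in U_K$. For a torus knot, $\pi_{T(p,q)}$ has the well-known presentation $\langle a,b \mid a^p=b^q\rangle$ with the meridian and longitude expressible in terms of $a,b$; the KCH irreps are classified by their dimension $d$ (bounded by the bridge number $\min(p,q)=p$ via Theorem \ref{ThmDimBound}) and by the eigenvalue data. The factor $\lambda\mu^{(p-1)q}+(-1)^p$ should correspond to the maximal-dimensional ($d=p$) irreps, while each factor $\lambda^n\mu^{(n-1)pq}-1$ for $1\le n\le p-1$ should correspond to lower-dimensional families. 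Rather than rederive these, I would point to the relevant theorem of \cite{C12} that assembles them into $\wt{A}_{T(p,q)}$.

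The main obstacle is bookkeeping rather than conceptual: I must verify that the normalizations match. The stable $A$-polynomial is defined only up to sign, up to removal of repeated factors, and up to clearing extraneous powers of $\lambda$ and $\mu$, and similarly $\aug_K$ is defined up to sign. I would therefore check that no spurious factor of the form $\lambda^a\mu^b$ has been dropped or introduced when passing between the $U_K$ computation in \cite{C12} and the stated product, and confirm that each displayed factor is genuinely irreducible over $\ints[\lambda,\mu]$ so that no repeated or reducible factor survives. I would also confirm that the overall sign and the explicit factor $(1-\mu)$ (versus $(\mu-1)$) are consistent, since the corollary is asserted only up to a sign.

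Assuming \cite{C12} records $\wt{A}_{T(p,q)}(\lambda,\mu)=(\lambda\mu^{(p-1)q}+(-1)^p)\prod_{n=1}^{p-1}(\lambda^n\mu^{(n-1)pq}-1)$ up to these ambiguities, the proof reduces to a single application of Corollary \ref{CorAugStabA}. Thus the entire argument is: invoke Corollary \ref{CorAugStabA} to replace $\aug_{T(p,q)}$ by $(1-\mu)\wt{A}_{T(p,q)}$, then substitute the explicit $\wt{A}_{T(p,q)}$ from \cite{C12}. I expect the write-up to be short, with the only care needed in the verification that the normalization conventions align.
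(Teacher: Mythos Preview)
Your approach is correct and matches the paper's: the corollary is stated immediately after the remark that $U_K$ was computed in \cite{C12} for torus knots, and the paper offers no further proof beyond the implicit invocation of Corollary~\ref{CorAugStabA}. Your additional remarks on normalization and on which factors correspond to which dimensions are reasonable elaborations but not required.
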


In the proof of Theorem \ref{ThmMain} we construct a matrix from $\epsilon$ with rank equal to $\dim V$, where the KCH irrep corresponding to $\epsilon$ has image in $\text{GL}(V)$.

\begin{defn} The \emph{rank} of an augmentation $\epsilon:\cl C_K\to\bb F$, with the property $\epsilon(\mu)\ne 1$, is the dimension of any KCH irrep that induces $\epsilon$. The \emph{augmentation rank}, $\text{ar}(K,\bb F)$, of a knot $K$ is the maximal rank of an augmentation to $\bb F$.
\label{DefnAugRank}
\end{defn}

Let $\text{mr}(K)$ denote the \emph{meridional rank} of $K$, i.e. the minimal size of a generating set of meridians for $\pi_K$. It is well-known that $\text{mr}(K)$ is at most the bridge number $b(K)$. Recalling Theorem \ref{ThmDimBound} we have,
	\begin{equation}\text{ar}(K,\bb F) \le \text{mr}(K) \le b(K).
	\label{EqnIneqs}
	\end{equation}
Problem 1.11 in \cite{Kir95}, a question of Cappell and Shaneson that remains open, asks whether $\text{mr}(K) = b(K)$. The following result implies that, similar to bridge number, $\text{ar}(K,\bb F)-1$ is additive under connect sums.

\begin{thm}Let $K_1,K_2\subset \rls^3$ be oriented knots and suppose $\mu_0\in\bb F^*$ is such that for $n=1,2$ there is an augmentation $\epsilon_n:\cl C_{K_n}\to\bb F$ with rank $d_n$ and so that $\epsilon_n(\mu)=\mu_0\ne1$. Then $K_1\# K_2$ has an augmentation with rank $d_1+d_2-1$. Furthermore, $\text{ar}(K_1\# K_2,\bb F)=\text{ar}(K_1,\bb F)+\text{ar}(K_2,\bb F)-1$.
\label{ThmConnSum}
\end{thm}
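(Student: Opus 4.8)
The plan is to move everything to the representation side via Theorem~\ref{ThmMain}, under which the rank of an augmentation $\epsilon$ with $\epsilon(\mu)=\mu_0\neq1$ is the dimension of any KCH irrep inducing it, so that $\text{ar}(K,\bb F)$ is the largest dimension of a KCH irrep of $\pi_K$. The geometric input is that the complement of $K_1\# K_2$ is glued from the complements of $K_1$ and $K_2$ along a meridional annulus, so van Kampen gives $\pi_{K_1\# K_2}\cong\pi_{K_1}\ast_{\langle m\rangle}\pi_{K_2}$, the amalgamated product identifying the two factor meridians with the common meridian $m$. Thus the whole statement becomes a question of gluing and splitting KCH irreps along the single line on which $\rho(m)$ has eigenvalue $\mu_0$.

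First I would prove the existence statement. Let $\rho_n:\pi_{K_n}\to\text{GL}(V_n)$ be KCH irreps inducing $\epsilon_n$, with $\dim V_n=d_n$ and $\rho_n(m)$ diagonal, equal to $\mu_0$ on a line $L_n=\bb F\,v_n$ and to $1$ on a complement $W_n$. Identifying $L_1\cong L_2$ with a single line $L$, set $V=L\oplus W_1\oplus W_2$, of dimension $d_1+d_2-1$. Extend $\rho_1$ to $\sigma_1:\pi_{K_1}\to\text{GL}(V)$ by letting $\pi_{K_1}$ act trivially on $W_2$, and symmetrically define $\sigma_2$. Then $\sigma_1(m)=\sigma_2(m)$ is diagonal with eigenvalue $\mu_0$ on $L$ and $1$ elsewhere, so by the universal property of the amalgamated product $\sigma_1,\sigma_2$ glue to $\rho:\pi_{K_1\# K_2}\to\text{GL}(V)$, whose value on $m$ meets Definition~\ref{DefKCHrep}. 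For irreducibility I would use the isotypic decomposition over each factor: as a $\pi_{K_1}$-module $V=V_1\oplus W_2$ with $V_1$ irreducible and nontrivial (since $\mu_0\neq1$) of multiplicity one and $W_2$ trivial, any invariant subspace $U$ satisfies $U\cap V_1\in\{0,V_1\}$. If $V_1\subseteq U$ then $U\supseteq L$, and $\pi_{K_2}$-invariance with irreducibility of $V_2$ forces $U\supseteq V_2$, hence $U=V$; if $U\cap V_1=0$ then $U\subseteq W_2\subsetneq V_2$ and $\pi_{K_2}$-irreducibility forces $U=0$. So $\rho$ is a KCH irrep inducing an augmentation with $\epsilon(\mu)=\mu_0$, and by Theorem~\ref{ThmMain} its rank is $\dim V=d_1+d_2-1$. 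Applying this with $d_n=\text{ar}(K_n,\bb F)$ at a common $\mu_0$ yields $\text{ar}(K_1\# K_2,\bb F)\ge\text{ar}(K_1,\bb F)+\text{ar}(K_2,\bb F)-1$.

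For the reverse inequality I would begin from a KCH irrep $\rho:\pi_{K_1\# K_2}\to\text{GL}(V)$ of dimension $D$, with $\mu_0$-eigenline $L$ of $\rho(m)$, and restrict to each factor, $\rho_i=\rho|_{\pi_{K_i}}$; these are KCH representations of $K_i$, but generally reducible. Set $V_i^0=\langle\rho(\pi_{K_i})L\rangle$. Since every meridian of $K_i$ is conjugate to $m$, its $\mu_0$-eigenline lies in $V_i^0$, so each meridian, and hence all of $\pi_{K_i}$, acts trivially on $V/V_i^0$; thus $(g-1)V\subseteq V_i^0$ for $g\in\pi_{K_i}$. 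This makes $V_1^0+V_2^0$ invariant under both factors, hence $V=V_1^0+V_2^0$ by global irreducibility, with $L\subseteq V_1^0\cap V_2^0$. Moreover every proper $\pi_{K_i}$-subrep of $V_i^0$ is trivial, so $V_i^0$ has a single nontrivial composition factor $F_i$, a KCH irrep of dimension $f_i\le\text{ar}(K_i,\bb F)$, modulo its maximal trivial subrep $T_i$. Combining, $D=\dim V\le\dim V_1^0+\dim V_2^0-1=(f_1+\dim T_1)+(f_2+\dim T_2)-1$.

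The hard part is precisely the last bound: it gives $D\le\text{ar}(K_1,\bb F)+\text{ar}(K_2,\bb F)-1$ only after one controls the trivial radicals, showing either that each $V_i^0$ is already a KCH irrep (so $T_i=0$) or that $T_1,T_2$ are absorbed into $V_1^0\cap V_2^0$. This cannot be read off a single factor, since the restrictions $\rho_i$ can be non-semisimple extensions, and must instead come from the interaction of the two cyclic pieces inside the globally irreducible $V$. I expect the cleanest route is to bypass representations here and argue on the matrix produced in the proof of Theorem~\ref{ThmMain}, whose rank computes the augmentation rank: a cord presentation of $\cl C_{K_1\# K_2}$ is assembled from those of $K_1$ and $K_2$ glued along the single meridian relation, so the associated matrix is block structured with exactly one shared row and column, giving rank additivity $\text{rank}=\text{rank}_1+\text{rank}_2-1$ and hence the bound directly, as well as reproving the gluing construction uniformly. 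The remaining subtlety is eigenvalue matching: the existence statement is stated for a common $\mu_0$, and for the equality of augmentation ranks one must know $\text{ar}(K_1,\bb F)$ and $\text{ar}(K_2,\bb F)$ are attained at a single $\mu_0$. I would settle this by showing, for each factor, that the maximal rank is attained on a cofinite set of eigenvalues $\mu_0\in\bb F^*$ via a constructibility argument on the augmentation variety over an infinite field, so a common $\mu_0$ can always be chosen; together with the two inequalities this gives the asserted equality.
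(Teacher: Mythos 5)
Your existence argument is correct, and it takes a genuinely different route from the paper. You glue KCH irreps over the amalgamated product $\pi_{K_1\# K_2}\cong\pi_{K_1}\ast_{\langle m\rangle}\pi_{K_2}$ and check irreducibility by isotypic decomposition; this is sound, since the restriction of your glued $V$ to $\pi_{K_1}$ is semisimple, with $V_1$ nontrivial of multiplicity one (here $\mu_0\ne1$ is used) and $W_2$ exactly the trivial isotypic piece, so your two cases $U\cap V_1=V_1$ and $U\cap V_1=0$ do exhaust the possibilities. The paper instead stays on the augmentation side: it assembles an explicit $(r_1+r_2-1)\times(r_1+r_2-1)$ matrix $E$ whose diagonal blocks are $\cl E(\Gamma_1)$ and $\cl E(\Gamma_2)$ and whose mixed entries are the rank-one products $\frac1{1-\mu_0}\epsilon_1([\gamma_i])\epsilon_2([\gamma_j^{-1}])$, and verifies the relator criterion of Lemma \ref{LemAugFomE} (using a normal form $R=g_\ell g_m g_\ell^{-1}g_k^{-1}$ for the relators). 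Your route buys conceptual clarity and avoids the relator bookkeeping; the paper's buys an explicit augmentation and, more importantly, the structure used in the reverse inequality.

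The genuine gap is in your upper bound, as you yourself flag. Bounding $\dim V$ by $(f_1+\dim T_1)+(f_2+\dim T_2)-1$ leaves the trivial radicals $T_i$ uncontrolled, and your fallback claim that the matrix from Theorem \ref{ThmMain} for $K_1\# K_2$ is ``block structured with exactly one shared row and column, giving rank additivity'' is not correct as stated: for an arbitrary augmentation of $\cl C_{K_1\# K_2}$ the mixed entries $\epsilon([\gamma_i\gamma_j^{-1}])$, with $\gamma_i,\gamma_j$ from different summands, are not zero. What is true is that they split as $\frac1{1-\mu_0}\epsilon([\gamma_i])\epsilon([\gamma_j^{-1}])$, but that splitting is a nontrivial fact about cords crossing the summing sphere (it is the content of the observation ($\ast$) in the paper's proof, in the spirit of \cite[Prop.~5.8]{Ng08}), and your sketch never supplies it. The paper resolves exactly your $T_i$ problem another way: it takes the KCH irrep $\rho$ on $W(\Gamma)$, the column span of $\cl E(\Gamma)$, restricts to $\pi_{K_n}$, and identifies the span $W_n$ of the relevant columns with $W(\Gamma_n)$, the column span of the matrix $\cl E(\Gamma_n)$ built from the restricted augmentation $\epsilon_n$, as $\pi_{K_n}$-modules. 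Since the proof of Theorem \ref{ThmMain} shows such a column-span representation is \emph{already irreducible}, $\dim W_n$ equals the rank of $\epsilon_n$, hence is at most $\text{ar}(K_n,\bb F)$ --- there is no trivial radical to absorb --- and then $W(\Gamma)=W_1+W_2$ with the common line spanned by the first column gives $d\le\text{ar}(K_1,\bb F)+\text{ar}(K_2,\bb F)-1$ via Lemma \ref{DimWAndEpsDeg}.

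One further caveat: your eigenvalue-matching worry for the displayed equality is legitimate (the lower bound needs both maxima attained at a common $\mu_0$, and the paper leaves this point implicit), but your proposed fix is itself unproven: constructibility of the top-rank locus only makes its $\mu$-projection finite or cofinite in $\bb F^*$, it does not rule out finite, and the argument says nothing over finite fields, for which the theorem is also stated. So flagging the issue is right, but your remedy does not close it.
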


Since $\cl C_K$ is isomorphic to $HC_0(K)\vert_{U=1}$ a study of augmentations can carried out in this setting, where the algebra is described from a closed braid representing $K$ (see Section \ref{SecHC}). This formulation allows us to obtain the following result.

\begin{thm} Suppose that $K$ is the closure of $B\in B_n$, and that $\epsilon:\cl C_K\to\bb C$ is an augmentation of $K$ with rank $n$ and $\epsilon(\mu)=\mu_0$. Then $\epsilon(\lambda)=(-\mu_0)^{-w(B)}$, where $w(B)$ is the writhe (or algebraic length) of $B$. Furthermore, there is a curve of rank $n$ augmentations in the closure of $V_K$ that corresponds to a factor $\lambda\mu^{w(B)}-(-1)^{w(B)}$ of $\aug_K(\lambda,\mu)$.
\label{ThmWrithe}
\end{thm}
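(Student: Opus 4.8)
The plan is to use Theorem \ref{ThmMain} to convert the augmentation into a representation, and then to read off the longitude eigenvalue directly from the braid word. First I would invoke Theorem \ref{ThmMain} to replace $\epsilon$ by an $n$-dimensional KCH irrep $\rho:\pi_K\to\text{GL}(V)$ with $\epsilon_\rho=\epsilon$, so that $\dim V=n$ and $\epsilon(\mu)=\mu_0$ is the eigenvalue $\ne1$ of each $\rho(a_i)$, where $a_1,\dots,a_n$ are the meridians of the $n$ strands at the bottom of the braid (these generate $\pi_K$). Since $\dim V=n$ equals the number of generators, Theorem \ref{ThmDimBound} shows the representation is maximal: writing $\rho(a_i)=I+(\mu_0-1)\,v_i\phi_i$ for the complex reflection with $\mu_0$-eigenline $\langle v_i\rangle$ and fixed hyperplane $\ker\phi_i$, irreducibility forces $v_1,\dots,v_n$ to be a basis of $V$. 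Taking $m=a_1$ as the basepoint meridian, $\rho(\ell)$ commutes with $\rho(m)$, so $v_1$ is an eigenvector of $\rho(\ell)$ with eigenvalue $\lambda_0=\epsilon(\lambda)$; the content of the first assertion is then exactly $\lambda_0=(-\mu_0)^{-w(B)}$.

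Next I would compute $\lambda_0$ from the braid. Writing the preferred longitude as $\ell=\ell_0\,m^{-w(B)}$, where $\ell_0$ is the blackboard-framed longitude read off the braid diagram and the correction $m^{-w(B)}$ restores the Seifert framing, the factor $\rho(m^{-w(B)})$ acts on $v_1$ by $\mu_0^{-w(B)}$. It remains to show $\rho(\ell_0)v_1=(-1)^{w(B)}v_1$. For this I would express the transport of the eigenline $\langle v_1\rangle$ through the braid in terms of the reflections $\rho(a_i)$: each generator $\sigma_i^{\pm1}$ acts on the tuple $(v_j,\phi_j)$ by the Artin (Hurwitz) move, which is precisely the unreduced Burau action specialized at the Burau variable $t=\mu_0$. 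Since the Burau matrix of each $\sigma_i$ has determinant $-\mu_0$, tracking the induced action on the distinguished eigenline around one full traversal of the (single) knot component contributes exactly the sign $(-1)^{w(B)}$, giving $\lambda_0=(-1)^{w(B)}\mu_0^{-w(B)}=(-\mu_0)^{-w(B)}$. \textbf{I expect this sign bookkeeping to be the main obstacle.} The power of $\mu_0$ is essentially forced by the framing correction together with the homological class $[\ell_0]=w(B)$, but the sign $(-1)^{w(B)}$ is delicate: it comes from the per-crossing determinant $-\mu_0$ rather than from any naive crossing count, so keeping the conventions for $\ell_0$, the Artin action, and the orientation mutually consistent is where the real care is needed.

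Finally, for the second assertion, I would observe that the computation above shows that \emph{every} rank-$n$ augmentation lies on the curve $\lambda\mu^{w(B)}=(-1)^{w(B)}$. To produce an entire curve of such augmentations I would deform $\mu_0$: the maximal KCH irreps are solutions of the braid relations for complex reflections with common eigenvalue $\mu_0$, and the given rank-$n$ representation sits in a one-parameter family as $\mu_0$ ranges over a cofinite subset of $\bb C^*$, via the explicit construction of Theorem \ref{ThmMain}. This yields augmentations with $(\epsilon(\lambda),\epsilon(\mu))=\bigl((-\mu_0)^{-w(B)},\mu_0\bigr)$ for infinitely many $\mu_0$, so these points, and hence the whole zero locus of the irreducible polynomial $\lambda\mu^{w(B)}-(-1)^{w(B)}$, lie in the closure of $V_K$. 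Since this polynomial is irreducible in $\ints[\lambda,\mu]$ (it is linear in $\lambda$, primitive, and not divisible by $\mu$) and vanishes along a curve contained in $\overline{V_K}$, it must divide the polynomial cutting out $\overline{V_K}$; that is, it is a factor of $\aug_K(\lambda,\mu)$, which is the desired conclusion.
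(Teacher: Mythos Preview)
Your approach is genuinely different from the paper's, and as written it has two real gaps.

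For the first assertion the paper never passes through a representation. It works directly in $HC_0(K)\vert_{U=1}$: since $\epsilon$ has rank $n$, Lemma~\ref{LemRkep(A)} makes $\epsilon({\bf A})$ invertible, and the defining relation ${\bf A}={\bf A}\cdot\Phi_B^R\cdot{\bf\Lambda}^{-1}$ then forces $\epsilon(\Phi_B^R)=\epsilon({\bf\Lambda})$. Since $\Phi_B^R$ is invertible over $\ints[\{a_{ij}\}]$ its determinant is $\pm1$; the Chain Rule together with the one-generator check $\det\Phi_{\sg_k^{\pm1}}^R=-1$ gives $\det\Phi_B^R=(-1)^{w(B)}$, hence $\epsilon(\lambda\mu^{w(B)})=(-1)^{w(B)}$. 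Your route instead tries to read the eigenvalue of $\rho(\ell_0)$ on $\langle v_1\rangle$ from a Burau-type determinant. But a determinant does not control the action on a single eigenline: $\rho(\ell_0)$ preserves the decomposition $\langle v_1\rangle\oplus\ker\phi_1$ (it commutes with $\rho(m)$), and knowing $\det\rho(\ell_0)$ says nothing about how that product splits between the two blocks. Moreover the Hurwitz action on the tuple $(v_j)$ is not literally the unreduced Burau action: the per-crossing matrix in the $(k,k+1)$ block has entry $(\mu_0-1)\phi_k(v_{k+1})$ rather than a universal $1-t$, and the pairings $\phi_i(v_j)$ depend on the particular representation. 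You rightly flag this sign as ``the main obstacle,'' but you do not resolve it; the clean resolution is precisely the paper's, since $\Phi_B^L$ \emph{is} the nonabelian Burau-type transport (with entries recording those pairings) and its determinant is computable over $\ints[\{a_{ij}\}]$ before any specialization.

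For the curve, your deformation is circular. Theorem~\ref{ThmMain} builds a representation \emph{from} an augmentation; it does not hand you a family of augmentations as $\mu_0$ varies, which is what you need. The paper instead uses a structural fact invisible from the representation side: $\Phi_B^L$ and $\Phi_B^R$ have entries in $\cl A_n$ and do not involve $\mu$. Thus once $\epsilon(\Phi_B^L)=\epsilon({\bf\Lambda})^{-1}$ and $\epsilon(\Phi_B^R)=\epsilon({\bf\Lambda})$ hold, one may keep $\epsilon_1(a_{ij})=\epsilon(a_{ij})$ and freely reset $\epsilon_1(\mu)=\mu_1$, $\epsilon_1(\lambda)=(-\mu_1)^{-w(B)}$; the augmentation relations still vanish identically, and $\det\epsilon_1({\bf A})$ is a nonzero polynomial in $\mu_1$, so the rank stays $n$ away from finitely many values.
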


\begin{cor} If $K$ is the closure of a 3-braid then
	\[\aug_K(\lambda,\mu^2) = (1-\mu^2)A_K(\lambda,\mu)B_K(\lambda,\mu^2)\]
where $A_K(\lambda,\mu)$ is the $A$-polynomial and $B_K(\lambda,\mu)$ is either 1 or $(\lambda\mu^{w(B)}\pm 1)$.
\label{CorAug3-braids}
\end{cor}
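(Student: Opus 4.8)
The plan is to factor the stable $A$-polynomial $\wt A_K$ according to the rank of the augmentations it records and to match each piece against the right-hand side. By Corollary \ref{CorAugStabA} we have $\aug_K(\lambda,\mu)=(1-\mu)\wt A_K(\lambda,\mu)$ up to sign, so substituting $\mu\mapsto\mu^2$ reduces the claim to proving
\[\wt A_K(\lambda,\mu^2)=A_K(\lambda,\mu)\,B_K(\lambda,\mu^2)\]
up to a unit. Since $K$ is the closure of a $3$-braid $B$, the three strands furnish three meridians generating $\pi_K$, so Theorem \ref{ThmDimBound} forces every KCH irrep to have dimension at most $3$; by Theorem \ref{ThmMain} and Definition \ref{DefnAugRank} every augmentation with $\epsilon(\mu)\ne1$ therefore has rank $1$, $2$, or $3$. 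I would stratify $\overline{U_K}=\overline{V_K}$ by this rank, argue that each irreducible component carries a well-defined generic rank, and conclude that $\wt A_K$ factors as a product of the contributions of the three strata.

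Next I would identify the rank $1$ and rank $2$ strata with the $A$-polynomial. A rank $1$ KCH rep factors through $H_1(\pi_K)=\ints$ and sends the null-homologous longitude to $1$, so it contributes exactly the line $\lambda=1$, i.e. the factor $(\lambda-1)$, which is precisely the abelian factor of $A_K$. For rank $2$ I would invoke the passage from a $2$-dimensional KCH irrep to an $\text{SL}(V)$ representation by multiplying by the abelian character $m\mapsto\mu_0^{-1/2}$, as in the discussion preceding Corollary \ref{CorAugStabA}. Under this modification the meridian acquires eigenvalues $\mu_0^{\pm1/2}$ while the longitude keeps eigenvalue $\lambda_0$, so the eigenvalue coordinates $(L,M)$ of the resulting $\text{SL}_2$ representation satisfy $L=\lambda_0$ and $M^2=\mu_0$. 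Hence, under $\mu_0=\mu^2$, the closure of the rank $2$ part of $U_K$ maps onto the non-abelian locus of the $A$-polynomial, and combining with the rank $1$ contribution gives $\wt A_K(\lambda,\mu^2)\big|_{\text{rank}\le2}=A_K(\lambda,\mu)$ up to a unit.

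It then remains to treat the rank $3$ stratum, which yields $B_K$. By Theorem \ref{ThmWrithe} every rank $3$ augmentation satisfies $\epsilon(\lambda)=(-\mu_0)^{-w(B)}$, so all such points lie on the single curve $\lambda\mu_0^{\,w(B)}=(-1)^{w(B)}$, and that theorem also guarantees this curve actually occurs as a factor of $\aug_K$ whenever a rank $3$ augmentation exists. Substituting $\mu_0=\mu^2$ turns its defining equation into $\lambda\mu^{2w(B)}-(-1)^{w(B)}$, which is of the asserted shape $\lambda\mu^{w(B)}\pm1$ evaluated at $\mu^2$; if no rank $3$ augmentation exists, this stratum is empty and $B_K=1$. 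Assembling the three strata and multiplying back by $(1-\mu^2)$ gives the stated factorization.

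The main obstacle is the rank $2$ step: I must check that the correspondence above is an honest identification of curves, i.e. that after the substitution $\mu_0=\mu^2$ the rank $2$ part of $\overline{U_K}$ recovers every non-abelian component of $A_K$ with the correct reduced multiplicity and introduces nothing extraneous. This requires that passing to $\text{SL}_2$ neither collapses nor creates components, that the reducible non-abelian boundary points contributing to $A_K$ are recovered in the Zariski closure, and that strata of different rank meet only in lower dimension, so that the factorization of $\wt A_K$ is genuine and free of repeated factors. Establishing that each component of $\overline{U_K}$ has a single generic rank, so that rank is a property of components rather than merely of points, is the technical heart of the argument.
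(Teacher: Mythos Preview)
Your approach is exactly the paper's: the one-line proof there reads ``Follows from Theorem \ref{ThmDimBound}, Corollary \ref{CorAugStabA}, and Theorem \ref{ThmWrithe},'' and you have correctly unpacked what that means --- bound the rank by $3$, identify the rank $\le 2$ part with the classical $A$-polynomial via the $\mu\mapsto\mu^2$ square-root trick, and use Theorem \ref{ThmWrithe} to pin down the rank $3$ contribution as either absent or the single curve $\lambda\mu^{w(B)}-(-1)^{w(B)}$.

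The worries you flag at the end are real, but the paper does not prove them here either: the statement that the $2$-dimensional KCH irreps recover precisely the classical $A$-polynomial is asserted in the introduction (just before Corollary \ref{CorAugStabA}) as established in \cite{C12}, and the corollary simply quotes that. So your ``main obstacle'' is not a gap in your argument relative to the paper --- it is the content of an input the paper imports. Likewise, the issue of a rank $3$ component coinciding with a factor of $A_K$ is absorbed by the phrasing ``$B_K$ is either $1$ or $\lambda\mu^{w(B)}\pm1$'': since $\aug_K$ has no repeated factors by definition, if that curve already divides $A_K(\lambda,\mu)$ one takes $B_K=1$.
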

\begin{proof}Follows from Theorem \ref{ThmDimBound}, Corollary \ref{CorAugStabA}, and Theorem \ref{ThmWrithe}.\end{proof}

The hypothesis of Theorem \ref{ThmWrithe} can only possibly hold if $K$ has a braid representative with (necessarily minimal) index equal to the bridge number of $K$. In this setting the number $w(B)$ is, in fact, an invariant of $K$ by independent work in \cite{DP12} and \cite{LM12}, where the Jones Conjecture is proved.

The proof of Theorem \ref{ThmWrithe} also supplies us techniques to find knots for which the left-hand inequality in (\ref{EqnIneqs}) is strict (see Theorem \ref{Thm3bridge3braids}).

\begin{cor} If $K$ is one of the knots $\set{8_{16},8_{17},10_{91},10_{94}}$ then $2=\text{ar}(K,\bb C)<\text{mr}(K)=3$.
\label{CorARvMR}
\end{cor}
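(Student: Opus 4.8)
The plan is to establish the two equalities separately. For the meridional rank, each of these four knots has bridge number $b(K)=3$, so the right-hand inequality in (\ref{EqnIneqs}) gives $\text{mr}(K)\le 3$; the reverse inequality $\text{mr}(K)\ge 3$ (equivalently, that none of them is $2$-bridge) I would take from the known meridional-rank computations for these knots. It then remains to show $\text{ar}(K,\bb C)=2$, which I would bound below and above by $2$. For the lower bound I would exhibit a rank-$2$ augmentation: each $K$ is non-trivial, so its $A$-polynomial is non-trivial and $\pi_K$ carries a curve of irreducible $\text{SL}(2,\bb C)$ representations; by the correspondence recalled before Corollary \ref{CorAugStabA} these yield $2$-dimensional KCH irreps with $\mu_0\ne1$, hence rank-$2$ augmentations with $\epsilon(\mu)\ne1$, so $\text{ar}(K,\bb C)\ge 2$.

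The crux is the upper bound $\text{ar}(K,\bb C)\le 2$, that is, ruling out a rank-$3$ augmentation. By Theorem \ref{ThmDimBound} together with $\text{mr}(K)=3$, every KCH irrep of $\pi_K$ has dimension at most $3$, so it suffices to show that no augmentation has rank exactly $3$. Each of these knots has braid index $3$ and so is the closure of some $B\in B_3$; I would fix such a minimal braid word and record its writhe $w(B)$. Were a rank-$3$ augmentation to exist, the ``furthermore'' clause of Theorem \ref{ThmWrithe} would force $\lambda\mu^{w(B)}-(-1)^{w(B)}$ to be a factor of $\aug_K(\lambda,\mu)=(1-\mu)\wt{A}_K(\lambda,\mu)$, using Corollary \ref{CorAugStabA}. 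The plan is therefore to compute $\aug_K$ for each knot---from the braid description of the cord algebra in Section \ref{SecHC}, or from the known $\wt{A}_K$---and to verify directly that this factor does not occur. Absence of the factor contradicts the existence of a rank-$3$ augmentation, whence $\text{ar}(K,\bb C)\le 2$; combined with the lower bound this gives $\text{ar}(K,\bb C)=2<3=\text{mr}(K)$.

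I expect this last computation to be the main obstacle. One must determine $\aug_K$ (equivalently $\wt{A}_K$) precisely enough to certify the \emph{absence} of the single predicted factor $\lambda\mu^{w(B)}-(-1)^{w(B)}$. The delicate point is that Theorem \ref{ThmWrithe} only determines the $\lambda$-value of a hypothetical rank-$3$ augmentation through $\mu_0$ and $w(B)$, so the whole contradiction rests on the arithmetic of the writhe and on the irreducible factorization of $\aug_K$; an error in either would vitiate the conclusion.
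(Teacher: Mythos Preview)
Your lower bound $\text{ar}(K,\bb C)\ge 2$ and your handling of $\text{mr}(K)=3$ match the paper's argument (the paper cites the existence of non-abelian $\text{SL}_2\bb C$ representations for the former, and \cite{BZ89} with orbifold geometrization for the latter). The substantive difference is in how you rule out a rank-$3$ augmentation.

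The paper does not compute $\aug_K$ at all. Instead it extracts from the \emph{proof} of Theorem~\ref{ThmWrithe} the necessary condition that any rank-$n$ augmentation (for $K=\widehat B$, $B\in B_n$) forces $\epsilon(\Phi_B^L)=\Delta(B)$; it then verifies, for a specific $3$-braid representative of each of the four knots, that the polynomial system $\Phi_B^L=\Delta(B)$ in the variables $a_{ij}$ has no solution over $\bb C$. This is a finite solvability check (a few polynomial equations in six unknowns) and is done with the \texttt{transverse.m} package.

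Your route---compute $\aug_K$ and certify that $\lambda\mu^{w(B)}-(-1)^{w(B)}$ is not a factor---is logically sound by the contrapositive of the ``furthermore'' clause in Theorem~\ref{ThmWrithe}, but it is strictly harder: obtaining $\aug_K$ amounts to eliminating \emph{all} of the $a_{ij}$ from the defining relations of $HC_0(K)\vert_{U=1}$, whereas the paper only needs to test consistency of a subsystem. In particular, appealing to ``the known $\wt A_K$'' risks circularity, since for $3$-braid closures Corollary~\ref{CorAug3-braids} shows that the only undetermined content of $\aug_K$ beyond the classical $A$-polynomial is precisely whether the factor $\lambda\mu^{w(B)}\pm1$ is present---exactly the question at hand. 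So your plan works only if you compute $\aug_K$ directly from the braid relations, and at that point the paper's shortcut (check $\Phi_B^L=\Delta(B)$ for solvability) is both simpler and what you should do.
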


It would be very interesting if there were a coherent (or even geometric!) way to understand the absence of augmentations with rank $\text{mr}(K)$. Theorem \ref{ThmWrithe} does apply to the following family of knots. Many 3-braid closures that admit a positive or negative flype fit into this family \cite{KL99}.

\begin{thm} If $\abs u, \abs w\ge 2$, $\abs v\ge 3$, and $\delta=\pm1$ and a knot $K$ is the closure of $b=\sg_1^w\sg_2^{\delta}\sg_1^u\sg_2^v$, then the closure of $V_K$ contains a curve of rank 3 augmentations.
\label{ThmFlypes}
\end{thm}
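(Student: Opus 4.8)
The plan is to reduce the statement to the construction of a single augmentation and then invoke Theorem~\ref{ThmWrithe}. Since $K$ is the closure of the $3$--braid $b=\sg_1^w\sg_2^{\delta}\sg_1^u\sg_2^v$, its group $\pi_K$ is generated by three meridians, so Theorem~\ref{ThmDimBound} forces every KCH irrep to have dimension at most $3$; a rank~$3$ augmentation is therefore one of maximal possible rank. If I can exhibit one augmentation $\epsilon:\cl C_K\to\bb C$ of rank $3$ with $\epsilon(\mu)=\mu_0\neq 1$, then the ``furthermore'' clause of Theorem~\ref{ThmWrithe} immediately produces a curve of rank~$3$ augmentations in the closure of $V_K$, cut out by the factor $\lambda\mu^{w(b)}-(-1)^{w(b)}$ of $\aug_K$ with $w(b)=w+\delta+u+v$. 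Thus the entire content is the construction of a single rank~$3$ augmentation, equivalently (by Theorem~\ref{ThmMain}) of a $3$--dimensional KCH irrep of $\pi_K$.

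To build this representation I would use the braid presentation of $\cl C_K$ from Section~\ref{SecHC}, together with the fact that a rank~$3$ augmentation has $\epsilon(\lambda)=(-\mu_0)^{-w(b)}$ by Theorem~\ref{ThmWrithe}; this determines $\lambda$ once $\mu_0$ is fixed, so I only need to solve for the generators $a_{ij}$ and for $\mu_0$. On the representation side, present $\pi_K$ by meridians $m_1,m_2,m_3$ with the relations coming from the Artin action of $b$ on the free group, and look for meridian matrices of the form $X_i=I+(\mu_0-1)P_i$, where each $P_i$ is a rank--one idempotent (so $X_i$ has eigenvalues $1,1,\mu_0$, as a KCH generator must). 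The key simplification is the identity $X_i^{\,k}=I+(\mu_0^{\,k}-1)P_i$, which collapses each block $\sg_1^w,\sg_1^u,\sg_2^v$ of the braid word into a single rank--one modification with parameter $\mu_0^w,\mu_0^u,\mu_0^v$ respectively. Feeding these into the closure relations $X_i=\rho(b_\sharp(m_i))$ converts the problem into an explicit polynomial system in the entries of the $P_i$ (the $a_{ij}$) and in $\mu_0$.

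I would then solve this system. Because the Artin action sends each generator to a conjugate of a generator, each closure relation reduces to the requirement that a conjugating word carry one eigenline and eigen-hyperplane to another, and the block structure (two $\sg_1$--blocks and two $\sg_2$--blocks) keeps the resulting equations manageable. Solving them should yield the $a_{ij}$ as rational functions of $\mu_0$ along a one--parameter family, which is the desired curve. The remaining, and genuinely delicate, task is to verify that this solution really has rank $3$: that the associated $3\times 3$ matrix is nonsingular, equivalently that $\langle X_1,X_2,X_3\rangle$ acts irreducibly. This is exactly where the hypotheses $\abs u,\abs w\ge 2$ and $\abs v\ge 3$ should enter---they guarantee that the governing subdeterminant does not vanish and that the braid does not degenerate to a torus or two--bridge knot, for which the maximal--rank representation would become reducible and the rank would drop below $3$.

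The main obstacle I anticipate is precisely this verification carried out uniformly in the four parameters $u,v,w,\delta$: one must solve the nonlinear augmentation equations for the general word and then show that full rank persists exactly under the stated inequalities. I would attack it by exploiting the idempotent power formula to reduce the computation to a bounded--size determinant whose factors can be tracked as functions of $\mu_0^w,\mu_0^u,\mu_0^v$, then isolate the degenerate loci and confirm that they are avoided once the exponents are large enough.
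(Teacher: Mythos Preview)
Your opening reduction is correct and matches the paper: once a single rank~$3$ augmentation exists, Theorem~\ref{ThmWrithe} gives the curve. The divergence comes in the construction of that augmentation.

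You propose to write the meridian matrices as $I+(\mu_0-1)P_i$ with rank--one idempotents $P_i$, feed them into the Artin closure relations, and solve the resulting polynomial system directly. This is a reasonable heuristic, but as written it is a plan rather than a proof: you explicitly defer the solution (``should yield'', ``I would attack it'') and identify the uniform rank--$3$ verification as an unresolved obstacle. Nothing in your outline explains why the system is solvable or why the hypotheses $|u|,|w|\ge 2$, $|v|\ge 3$ are the right ones; you only conjecture that they prevent degeneration.

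The paper avoids the direct polynomial system entirely. It proves a general inductive statement (Theorem~\ref{ThmBuildingRknAugs}): if the closure of $B\in B_n$ has a rank~$n$ augmentation descending to $\cl A_n^{ab}$, then for $|u|\ge 2$, $|v|\ge 3$, $\delta=\pm1$ the closure of $B\sg_n^{\delta}\sg_{i,n}^u\sg_n^v\in B_{n+1}$ has a rank~$n+1$ augmentation, under the parity condition ``$u+v-1$ even, or $i=1$ and $u$ odd''. The proof factors $\Phi_b^L$ via the Chain Rule (Theorem~\ref{ThmChainRule}) and controls each factor separately: Lemma~\ref{LemBasePhis} shows that $\Phi_{\sg_{i,n}^u}^L$ can be made diagonal by choosing a root of a Chebyshev--type recursion $P_{k+1}(x)=P_{k-1}(x)-xP_k(-x)$, and Lemma~\ref{LemFlype} handles the trailing $\sg_n^v$ block. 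The hypotheses $|u|\ge 2$ and $|v|\ge 3$ arise precisely because one needs $P_{u-1}$ and $P_{v-2}$ to have roots. Applying this with $n=2$, $i=1$, and $B=\sg_1^w$ (whose closure is a $(2,w)$ torus knot with an obvious rank~$2$ augmentation) gives the result when $u$ is odd. When $u$ is even, the paper observes that $w$ must be odd (else $\widehat b$ is a link) and that a flype carries $b$ to $\sg_1^u\sg_2^{\delta}\sg_1^w\sg_2^v$ with the same closure, reducing to the odd case.

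So the missing idea in your proposal is this structural factorization via the Chain Rule together with the parity/flype trick; without it you are left with an unsolved four--parameter nonlinear system and no mechanism for the case split.
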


The paper is organized as follows. In Section \ref{SecBG} we review the background on knot contact homology in our setting, particularly the cord algebra and KCH representations. Section \ref{SecKCHRepsandAugs} is dedicated to determining KCH representations from augmentations and the proof of Theorem \ref{ThmMain}. In Section \ref{SecAugReconstruct} we discuss how to build an augmentation from basic data and how augmentation rank behaves under connect sum, proving Theorem \ref{ThmConnSum}. Finally, Section \ref{SecHRkAugs} studies augmentations (particularly those of highest possible rank) from the view of a braid closure. In this section Theorem \ref{ThmWrithe} and Theorem \ref{ThmFlypes} are proved.

\subsection*{Acknowledgements}
The author would like to thank Lenhard Ng for many enlightening discussions that have been invaluable to this work. He also warmly thanks the organizers of the 2012 CAST Summer School and Conference held at the R\'enyi Institute in Budapest. This work was partly supported by NSF grant DMS-0846346 and an AMS-Simons Travel Grant.

\section{Background}
\label{SecBG}

We begin by reviewing the definition of the cord algebra $\cl C_K$ introduced in \cite{Ng08}. We also discuss two alternate constructions of the cord algebra. 

The first construction (Section \ref{SecHC}) is the (degree zero) framed knot contact homology defined in \cite{Ng08}, which will be needed in Section \ref{SecHRkAugs}. This version is the degree zero homology of the ($U=1$) combinatorial knot DGA, which is a computation of the Legendrian DGA of the conormal lift of $K$ to the unit cotangent bundle \cite{EENS12}. The conventions we use in the definition here match those from \cite{NgSurv12}. We use the notation $HC_0(K)\vert_{U=1}$ to highlight when we work with the combinatorial knot DGA construction.

A second construction we review (Section \ref{SecP_KInducedKCH}) works with the set of elements in $\pi_K$ and is our starting point for the correspondence in (\ref{EqnCorr}). In the current section and those that follow, we write $\cl P_K$ to refer to this incarnation of the cord algebra.

\subsection{The cord algebra}
\label{SubsecC_K}

Let $R_0$ be the Laurent polynomial ring $\ints[\lambda^{\pm1},\mu^{\pm1}]$. Given a knot $K\subset S^3$ with a basepoint $\ast$ on $K$, define a \emph{cord} of $(K,\ast)$ to be a path $\gamma:[0,1]\to S^3$ such that $\gamma^{-1}(K)=\set{0,1}$ and $\ast\not\in\gamma([0,1])$.

\begin{defn} Consider the noncommutative unital algebra over $R_0$ freely generated by homotopy classes of cords of $(K,\ast)$ for some choice of basepoint. (Here, homotopy of cords allows endpoints to move, but not past the basepoint.) The \emph{cord algebra} $\cl C_K$ is the quotient of this algebra by the ideal generated by relations

\begin{tikzpicture}[scale=0.5,>=stealth]
	\draw(-3,0) node {(1)}; 
	\draw[very thick,->](1,1)--(-1,-1);
	\draw[gray,->](0,0) ..controls (-1.5,0)and(0,1.5).. (0,0);
	\draw(2,-.25) node {$=$};
	\draw(3.5,-.25) node {$1-\mu$;};

	\draw(-3,-3) node {(2)}; 
	\draw[very thick,->](1,-2)--(-1,-4);
	\draw[gray,->](0.35,0.35-3) -- (-1,-2);
	\filldraw(0,-3) circle (3pt);
	\draw[very thick](.4,-3.4) node {{\huge$\ast$}};
	
	\draw(2,-.25-3) node {$=$};

	\draw(3,-.25-3) node {$\lambda$};
	\draw[very thick,->](5.5,-2)--(3.5,-4);
	\draw[gray,->](4.15,-0.35-3) -- (3.5,-2);
	\filldraw(4.5,-3) circle (3pt);
	\draw[very thick](4.9,-3.4) node {{\huge$\ast$}};

	\draw (6.5,-.25-3) node {and};
	\draw[very thick,->](9+1,-2)--(9-1,-4);
	\draw[gray,<-](9-0.35,-0.35-3) -- (9-1,-2);
	\filldraw(9,-3) circle (3pt);
	\draw[very thick](9.4,-3.4) node {{\huge$\ast$}};
	
	\draw(9+2,-.25-3) node {$=$};

	\draw(9+3,-.25-3) node {$\lambda$};
	\draw[very thick,->](9+5.5,-2)--(9+3.5,-4);
	\draw[gray,<-](9+4.85,0.35-3) -- (9+3.5,-2);
	\filldraw(9+4.5,-3) circle (3pt);
	\draw[very thick](9+4.9,-3.4) node {{\huge$\ast$}};
	
	\draw(-3,-6) node {(3)}; 
	\draw[very thick,->](1,-5)--(-1,-7);
	\draw[->, white,line width=3pt](-1,-5)--(1,-7);
	\draw[->, gray](-1,-5)--(1,-7);	
	\draw(2,-.25-6) node {$-$};
	\draw(3,-.25-6) node {$\mu$};
	\draw[->,gray](3.5,-5)--(5.5,-7);
	\draw[->,very thick,draw=white,double=black,double distance=1pt](5.5,-5)--(3.5,-7);
	\draw(6.5,-.25-6)node{$=$};
	\draw[very thick,->](9,-5)--(7,-7);
	\draw[gray,->](7,-5)--(8,-6);
	\filldraw(9.5,-6) circle (1pt);
	\draw[very thick,->](12,-5)--(10,-7);
	\draw[gray,->](11,-6)--(12,-7);
	
	\end{tikzpicture}
\label{DefnC_K}
\end{defn}

In the above definition, relations are between any cords that differ only locally, as shown. The knot $K$ is depicted more thickly. Also the figures are understood to be 3-dimensional, rather than depicting relations on planar diagrams.

\subsection{Framed knot contact homology}
\label{SecHC}

We review the construction of $HC_0(K)\vert_{U=1}$ from the combinatorial knot DGA viewpoint. We content ourselves with only defining the algebra that arises as the degree zero part of the knot DGA. For more details see \cite{NgSurv12}. 

Let $\cl A_n$ be the noncommutative unital algebra over $\ints$ freely generated by $n(n-1)$ elements $a_{ij}$, $1\le i\ne j\le n$. Let $B_n$ denote the braid group on $n$ strands. If $\sg_k$ is one of the standard generators (twisting strands in a right-handed manner), then define $\phi:B_n\to\text{Aut }\cl A_n$ by defining it on each generator as
	\[\phi_{\sg_k}:\begin{cases}
				a_{ij}\mapsto a_{ij},					& i,j\ne k,k+1\\
				a_{k+1,i}\mapsto a_{ki},				&  i\ne k,k+1\\
				a_{i,k+1}\mapsto a_{ik},				& i\ne k,k+1\\
				a_{k,k+1}\mapsto -a_{k+1,k},			&\ \\
				a_{k+1,k}\mapsto -a_{k,k+1},			&\ \\
				a_{ki}\mapsto a_{k+1,i}-a_{k+1,k}a_{ki}	&i\ne k,k+1\\
				a_{ik}\mapsto a_{i,k+1}-a_{ik}a_{k,k+1}	&i\ne k,k+1
				\end{cases}\]

Include $\iota:B_n\hookrightarrow B_{n+1}$ so that the $(n+1)^{st}$ strand does not interact, and for $B\in B_n$ let $\phi_B^*=\phi_{\iota(B)}\in\text{Aut }\cl A_{n+1}$. Define matrices $\Phi_B^L, \Phi_B^R\in\text{Mat}_{n\times n}(\cl A_n)$ by
\[\phi_B^*(a_{i,n+1})=\sum_{j=1}^n(\Phi_B^L)_{ij}a_{j,n+1},\]
\[\phi_B^*(a_{n+1,i})=\sum_{j=1}^na_{n+1,j}(\Phi_B^R)_{ji}.\]

Define an involution $x\mapsto\overline x$ on $\cl A_n$ as follows: first $\overline{a_{ij}}=a_{ji}$; then, for any $x,y\in\cl A_n$, $\overline{xy}=\overline y\hspace*{1pt}\overline x$ and extend the operation linearly to $\cl A_n$.
\begin{prop}[\cite{Ng05}, Prop.\hspace*{-0.7pt} 6.2]For a matrix of elements in $\cl A_n$, let $\overline{M}$ be the matrix such that $\left(\overline M\right)_{ij} = \overline{M_{ij}}$. Then for $B\in B_n$, $\Phi_B^R$ is the transpose of $\overline{\Phi_B^L}$.
\label{Prop:Transpose}
\end{prop}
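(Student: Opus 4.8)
The plan is to show that the bar involution is equivariant with respect to the braid action $\phi$, and then to read off the claim by applying the involution to the defining equation of $\Phi_B^L$.

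\textbf{Step 1 (equivariance of the involution).} I would first establish the lemma that $\overline{\phi_B(x)}=\phi_B(\overline{x})$ for every $B\in B_n$ and $x\in\cl A_n$. The set of algebra automorphisms $\psi$ satisfying $\overline{\psi(x)}=\psi(\overline x)$ is closed under composition and inverse, hence is a subgroup of $\text{Aut }\cl A_n$; since $\phi$ is a homomorphism, it therefore suffices to verify the identity for each standard generator $\psi=\phi_{\sg_k}$, and by linearity and multiplicativity only on the algebra generators $a_{ij}$. This is a direct check against the seven cases defining $\phi_{\sg_k}$: recalling that $\overline{a_{ij}}=a_{ji}$ and that bar reverses products, one verifies, for instance, that $\overline{\phi_{\sg_k}(a_{ki})}=a_{i,k+1}-a_{ik}a_{k,k+1}=\phi_{\sg_k}(a_{ik})=\phi_{\sg_k}(\overline{a_{ki}})$, and similarly in the remaining cases (the sign-swapping cases $a_{k,k+1}\leftrightarrow a_{k+1,k}$ and the index-permuting cases being immediate). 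This step is mechanical but is the heart of the proof.

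\textbf{Step 2 (applying the involution).} Working now in $\cl A_{n+1}$, where the same lemma applies to $\phi_B^*$ since $\iota(B)$ is a word in $\sg_1,\dots,\sg_{n-1}$, I would apply bar to the defining relation $\phi_B^*(a_{i,n+1})=\sum_j (\Phi_B^L)_{ij}a_{j,n+1}$. Using that bar reverses products and satisfies $\overline{a_{j,n+1}}=a_{n+1,j}$, the right-hand side becomes $\sum_j a_{n+1,j}\,\overline{(\Phi_B^L)_{ij}}$; using equivariance and $\overline{a_{i,n+1}}=a_{n+1,i}$, the left-hand side becomes $\phi_B^*(a_{n+1,i})=\sum_j a_{n+1,j}(\Phi_B^R)_{ji}$, which is exactly the defining relation for $\Phi_B^R$. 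Equating the two gives $\sum_j a_{n+1,j}\big((\Phi_B^R)_{ji}-\overline{(\Phi_B^L)_{ij}}\big)=0$, with all the coefficients lying in $\cl A_n$.

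\textbf{Step 3 (extracting entries).} Finally, since $\cl A_{n+1}$ is free on the $a_{pq}$, every monomial appearing in $a_{n+1,j}c$ with $c\in\cl A_n$ begins with the letter $a_{n+1,j}$; monomials with distinct first letters cannot cancel, and left multiplication by a fixed generator is injective on the free algebra. Hence each coefficient must vanish, giving $(\Phi_B^R)_{ji}=\overline{(\Phi_B^L)_{ij}}$ for all $i,j$, which is precisely the statement $\Phi_B^R=(\overline{\Phi_B^L})^T$. I expect the main obstacle to be purely the bookkeeping in Step 1 --- keeping the order reversal of the involution straight across all seven defining cases of $\phi_{\sg_k}$ --- whereas Steps 2 and 3 are short once the equivariance lemma is in hand.
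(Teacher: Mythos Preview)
Your argument is correct. The paper does not actually supply a proof of this proposition; it simply cites \cite{Ng05}, Proposition~6.2, and uses the result as a black box. So there is no in-paper proof to compare against.

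That said, your approach is exactly the standard one (and essentially what appears in the cited reference): verify that the involution $x\mapsto\overline{x}$ intertwines each $\phi_{\sg_k}$ by checking on algebra generators, then apply the involution to the defining identity for $\Phi_B^L$ to obtain the defining identity for $\Phi_B^R$ with the entries $\overline{(\Phi_B^L)_{ij}}$ in the $(j,i)$ slot. Your Step~3 justification is also sound: since the coefficients $(\Phi_B^R)_{ji}-\overline{(\Phi_B^L)_{ij}}$ lie in $\cl A_n$, every monomial of $a_{n+1,j}c_j$ has $a_{n+1,j}$ as its leftmost letter, and these cannot cancel across different $j$ in the free algebra $\cl A_{n+1}$. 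The only real labor is the case check in Step~1, which you have handled correctly in the representative case you wrote out; the remaining cases are symmetric or easier.
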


Finally, define a matrix ${\bf A}$ by 
\begin{equation}{\bf A}_{ij}=\begin{cases}a_{ij}	& i<j\\
						-\mu a_{ij}	& i>j\\
						1-\mu		& i=j\end{cases},
\label{EqnAmatrixDefn}
\end{equation}
and, given $B\in B_n$, the diagonal matrix ${\bf\Lambda}=\text{diag}[\lambda\mu^w,1,\ldots,1]$ where $w$ is the writhe (algebraic length) of $B$. Extend the map $\phi_B$ to $\cl A_n\otimes R_0$ so that it fixes $\lambda,\mu$.

\begin{defn}Let $K=\widehat{B}$ be the (braid) closure of $B\in B_n$ and let $\cl I_B$ be the ideal in $\cl A_n \otimes R_0$ generated by entries in the matrices ${\bf A}-{\bf\Lambda}\cdot\phi_B({\bf A})\cdot{\bf\Lambda}^{-1}, {\bf A}-{\bf\Lambda}\cdot\Phi^L_B\cdot{\bf A}$, and ${\bf A}-{\bf A}\cdot\Phi^R_B\cdot{\bf\Lambda}^{-1}$. The algebra $(\cl A_n\otimes R_0)/\cl I_B$ is the degree zero homology of the combinatorial knot DGA, denoted $HC_0(K)\vert_{U=1}$. 
\label{DefHC_0}
\end{defn}
In Definition \ref{DefHC_0} (and throughout the paper), given a homomorphism $f$ defined on the entries of a matrix $M$ we use $f(M)$ for the matrix obtained by applying $f$ to the entries. The following was proved in \cite{Ng08} (see also \cite[\S 3, 4]{NgSurv12}).

\begin{thm} There is an isomorphism $F_{HC}:\cl C_K\to HC_0(K)\vert_{U=1}$ of $R_0$-algebras.
\label{ThmHC0}
\end{thm}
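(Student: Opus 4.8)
The plan is to realize both algebras through a common geometric model built from a braid presentation of $K$, to match generators, and then to verify that the three defining relations of $\cl C_K$ correspond precisely to the ideal $\cl I$. First I would fix $K$ as the closure of $B\in B_n$ and choose a disk $D$ transverse to the braid axis meeting $K$ in $n$ points $p_1,\ldots,p_n$, with the basepoint $\ast$ placed just off $D$. Inside a neighborhood of $D$ I single out the $n(n-1)$ short cords $\gamma_{ij}$ running from $p_i$ to $p_j$, and define $F_{HC}$ on generators by $\gamma_{ij}\mapsto a_{ij}$, together with $\lambda\mapsto\lambda$ and $\mu\mapsto\mu$. Reversing the orientation of a cord should correspond to the involution $a_{ij}\mapsto a_{ji}$; Proposition \ref{Prop:Transpose} then guarantees that the left and right relations are interchanged under this symmetry, which is exactly what we want, since the two endpoints of a cord on $K$ play symmetric roles.

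Next I would establish well-definedness by checking that each cord relation maps into $\cl I$. Relation (1), where a cord degenerates near a single point of $K$, should reproduce the diagonal value $1-\mu$ appearing in ${\bf A}$; relation (2), sliding an endpoint past $\ast$, should reproduce the factor of $\lambda$ (more precisely $\lambda\mu^{w}$) carried by ${\bf\Lambda}$; and relation (3), the skein move pushing a cord endpoint through a strand of $K$, should reproduce the $\mu$-weighted crossing relations. The key computation is to track a cord as its endpoints are dragged along $K$: once for a full trip around the braid closure, producing the relation coming from ${\bf A}-{\bf\Lambda}\,\phi_B({\bf A})\,{\bf\Lambda}^{-1}$ via the braid monodromy, and for partial trips that sweep a single endpoint through the braid, producing the mixed relations coming from ${\bf A}-{\bf\Lambda}\,\Phi_B^L\cdot{\bf A}$ and ${\bf A}-{\bf A}\cdot\Phi_B^R\,{\bf\Lambda}^{-1}$. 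I would carry this out by induction on the braid word, verifying for a single generator $\sg_k^{\pm1}$ that the effect of pushing a cord through the corresponding crossing agrees with the automorphism $\phi_{\sg_k}$ of $\cl A_n$; the general braid then follows by composition.

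Finally I would construct the inverse. Surjectivity is immediate, since every $a_{ij}$ is the image of $\gamma_{ij}$; for injectivity I would show that an arbitrary cord can be homotoped, using relations (1)--(3), into an $R_0$-linear combination of products of short cords, so that no relations beyond those generating $\cl I$ survive. I expect the main obstacle to be precisely this reduction together with the inductive crossing analysis of the previous step: one must verify that the local cord manipulations near a braid generator generate exactly $\phi_{\sg_k}$ and nothing more, so that the topologically defined relations and the combinatorially defined ideal $\cl I$ genuinely coincide rather than one merely containing the other. Controlling this matching carefully---ensuring that no extra relations are introduced, and none lost, as a cord is pushed through a crossing---is where the real work lies.
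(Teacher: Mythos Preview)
The paper does not prove Theorem~\ref{ThmHC0}; it is quoted from \cite{Ng08} (with a pointer to \cite[\S3,4]{NgSurv12}), and only the definition of $F_{HC}$ on generators is recorded afterward. So there is no in-paper proof to compare against: what you have written is a sketch of the argument in the cited references, not a comparison target within this paper.

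That said, your outline is broadly the right shape, but a few details are off relative to the map the paper actually writes down. The paper sets $F_{HC}(c_{ij})={\bf A}_{ij}$, not $a_{ij}$; the $\mu$-twist for $i>j$ (and the value $1-\mu$ on the diagonal) is essential, and the paper explicitly flags that one needs the \emph{framed} version of the algebra of paths, which incorporates $\mu$ into the map $\psi$, to obtain an isomorphism (see the sentence following Figure~\ref{FigRelnPathAlg}). Your assignment $\gamma_{ij}\mapsto a_{ij}$ and the separate ``reversal $\leftrightarrow$ involution'' bookkeeping would not by itself reproduce the relation $(1)$ correctly, nor the $-\mu$ weighting in skein relation $(3)$. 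Likewise, relation $(2)$ matches $\lambda$ itself, not $\lambda\mu^w$; the factor $\mu^w$ in ${\bf\Lambda}$ arises from accumulated crossings of the basepoint-carrying strand as the cord endpoint travels around the closure, and is accounted for separately from the single basepoint slide. If you want to turn your sketch into an honest proof, you would need to set up the framed path algebra carefully and then carry out exactly the inductive crossing analysis you describe; this is done in \cite[\S3.2]{Ng08}, which is why the present paper simply cites it.
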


For the discussion in Section \ref{SecHRkAugs} we need to define $F_{HC}$ on generators of $\cl C_K$. We view the braid $B$ as horizontal with strands oriented to the right and numbered to be increasing from top to bottom. Consider a flat disk $D$, to the right of the braid, with $n$ punctures where it intersects $K=\widehat{B}$ (see Figure \ref{FigA_nGens}). We assume that the $n$ punctures of $D$ are collinear, on a line that separates $D$ into upper and lower half-disks. Denote by $c_{ij}$, a cord of $(K,\ast)$ that is contained in the upper half-disk of $D$, and has initial endpoint on the $i^{th}$ strand and terminal endpoint on the $j^{th}$ strand. The cord algebra $\cl C_K$ is generated by the homotopy classes of the set $\{c_{ij}, 1\le i\ne j\le n\}$ and $F_{HC}$ is defined by $F_{HC}(c_{ij}) = {\bf A}_{ij}$ (as in (\ref{EqnAmatrixDefn})).

\begin{figure}[ht]
\begin{tikzpicture}[scale=0.5,>=stealth]
	\clip (-.7,-1.25) rectangle (7.3,2.5);
	\draw(0,-.25)--(0,2.25)--(3.5,2.25)--(3.5,-.25)--cycle;
	\draw(1.75,1) node {{\Large $B$}};
	\foreach \y in {0,.5,1,1.5,2}
		\filldraw (4.75,\y) circle (2pt);
	\foreach \y in {0,.5,1,1.5,2}
		\draw(-.35,\y) -- (0,\y);
	\foreach \y in {0,.5,1,1.5,2}	
		\draw[thin] (3.5,\y) -- (4.75,\y);
	\foreach \y in {0,.5,1,1.5,2}	
		\draw[->] (5.18,\y) -- (5.7,\y);
	\foreach \y in {0,.5,1,1.5,2}
		\draw (5.7,\y) ..controls (7.7+\y*1.3,\y) and (7.7+\y*1.3,-\y-1) ..(3,-\y-1)
			 		  ..controls (-3-\y*1.3,-\y-1) and (-2.7-\y*1.3,\y)..(-.7,\y);
	\draw[red,->]
			(4.75,.5) ..controls (5.05,.75) and (5.05,1.25) ..(4.75,1.5);
	\draw[draw=white,double=black,very thick]
			(4.45,2.1)  ..controls (4.5,2.35) and (4.6,2.5) .. (4.8,2.5)
					  ..controls (5,2.5) and (5.2,2) .. (5.2 ,1)
					  ..controls (5.2,0) and (5,-.5) .. (4.8,-.5)
					  ..controls (4.6,-.5) and (4.5,-.35)..(4.45,-.1);
	\draw 	(4,-.6) node {{\footnotesize $D$}};
	\draw	(6.15,1.95) node {{\large $\ast$}};
	\draw	(-.65,.5) node {{\footnotesize $i$}}
			(-.65,1.5) node{{\footnotesize $j$}};
\end{tikzpicture}
\caption{Cord $c_{ij}$ of $K=\widehat B$}
\label{FigA_nGens}
\end{figure}

To understand $\phi_B$ from this perspective, view $c_{ij}$ as a path in $D$. Considering $B$ as a mapping class of the punctured disk $D$, let $B\cdot c_{ij}$ denote the isotopy class (fixing endpoints) of the path to which $c_{ij}$ is sent. Viewing $D$ from the left (the side from which the strands of $B$ point towards $D$), $\sg_k$ acts by rotating the $k\textrm{-}$ and $(k+1)\textrm{-}$punctures an angle of $\pi$ about their midpoint in counter-clockwise fashion. 

Following \cite[Section 2]{Ng05b}, consider the set $P(D)$ of isotopy classes of embedded (oriented) paths in $D$ with endpoints on distinct punctures. There is a unique map $\psi:P(D)\to \cl A_n$ which satisfies $\psi(c_{ij})=a_{ij}$ if $i<j$, $\psi(c_{ij})=-a_{ij}$ if $i>j$, and such that $\psi(B\cdot c_{ij}) = \phi_B(\psi(c_{ij}))$ for any $B\in B_n$. In addition, given representative paths of elements of $P(D)$ which differ only near a puncture as depicted in Figure \ref{FigRelnPathAlg}, the relation in Figure \ref{FigRelnPathAlg} is satisfied by the corresponding images under $\psi$. In Section \ref{SecHRkAugs} we use this characterization of $\phi_B$ to justify some calculations of the matrix $\phi_B({\bf A})$.

\begin{figure}[ht]
\begin{tikzpicture}[scale=0.5,>=stealth]
	\filldraw	(-1,1) circle (3pt)
				(3.65,1) circle (3pt)
				(7.75,1) circle (3pt)
				(9.65,1)circle(3pt);
	\filldraw 	(8.525,1) circle(1pt);
	\draw[thick,->]	
				(-2,1) ..controls (-1.5,1) and (-1.5,.5) ..(-1,.5)
					   ..controls (-.5,.5) and (-.5,1) ..(0,1);
	\draw[thick,->]	
				(2.65,1) ..controls (3.15,1) and (3.15,1.5) ..(3.65,1.5)
					   ..controls (4.15,1.5) and (4.15,1) ..(4.65,1);
	\draw[thick,->]	(6.9,1) -- (7.8,1);
	\draw[thick,->]	(9.65,1) -- (10.65,1);
	\draw	(1.25,1) node {$=$}
			(5.65,1) node {$-$};
	\foreach \x in {-2,2.65,6.95,9.65}
	\draw	(\x,1) node[left] {$\psi\big[$};
	\foreach \x in {0,4.65,7.85,10.4}
	\draw 	(\x,1) node[right] {$\big]$};			
\end{tikzpicture}
\caption{Relation in the image of $\psi$}
\label{FigRelnPathAlg}
\end{figure}
To see that $F_{HC}$ produces an isomorphism one must use a framed version of $\psi$, incorporating $\mu$ into the map (see \cite[\S 3.2]{Ng08} for details).

We also require the following results, originally proved in \cite{Ng05}. Following the terminology from that paper, we refer to Theorem \ref{ThmChainRule} as the Chain Rule.

\begin{thm} Let $B,B'$ be braids in $B_n$. Then $\Phi_{BB'}^L = \phi_B(\Phi_{B'}^L)\cdot\Phi_B^L$ and $\Phi_{BB'}^R = \Phi_B^R\cdot\phi_B(\Phi_{B'}^R)$.
\label{ThmChainRule}
\end{thm}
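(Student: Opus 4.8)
The plan is to reduce everything to the single fact that $\phi^*=\phi\circ\iota$ is a group homomorphism, and then to extract matrix coefficients by a freeness argument. First I would observe that since $\iota:B_n\hookrightarrow B_{n+1}$ and $\phi:B_{n+1}\to\text{Aut }\cl A_{n+1}$ are both homomorphisms, so is their composite; hence $\phi_{BB'}^*=\phi_B^*\circ\phi_{B'}^*$ as automorphisms of $\cl A_{n+1}$. The one compatibility I then need is that $\phi_B^*$ restricts to $\phi_B$ on the subalgebra $\cl A_n\subset\cl A_{n+1}$. This holds because $\iota(B)$ is a word in $\sg_1,\ldots,\sg_{n-1}$, and for $k\le n-1$ the defining formulas for $\phi_{\sg_k}$ send each $a_{ij}$ with $i,j\le n$ to an expression involving only indices $\le n$; so the $(n+1)^{st}$ strand is never introduced when acting on $\cl A_n$.

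Next I would record the bookkeeping that makes the matrices well defined. The elements $a_{1,n+1},\ldots,a_{n,n+1}$ generate a free left $\cl A_n$-submodule $\cl M\subset\cl A_{n+1}$, namely the span of words whose only occurrence of the index $n+1$ is a final letter $a_{\ell,n+1}$. Consequently any element written as $\sum_\ell c_\ell\, a_{\ell,n+1}$ with $c_\ell\in\cl A_n$ has uniquely determined coefficients $c_\ell$. By definition $\phi_B^*(a_{j,n+1})=\sum_\ell(\Phi_B^L)_{j\ell}a_{\ell,n+1}\in\cl M$, and since $\cl A_n\cdot\cl M\subseteq\cl M$, all of the computation below stays inside $\cl M$ and coefficients may be read off unambiguously.

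The main computation is then a direct substitution. Applying $\phi_{BB'}^*=\phi_B^*\circ\phi_{B'}^*$ to $a_{i,n+1}$, expanding the inner application by the definition of $\Phi_{B'}^L$, using that $\phi_B^*$ is an algebra map which agrees with $\phi_B$ on the coefficients $(\Phi_{B'}^L)_{ij}\in\cl A_n$, and finally expanding $\phi_B^*(a_{j,n+1})$ by the definition of $\Phi_B^L$, gives
\[\phi_{BB'}^*(a_{i,n+1})=\sum_{\ell}\Big(\sum_{j}\phi_B\big((\Phi_{B'}^L)_{ij}\big)\,(\Phi_B^L)_{j\ell}\Big)a_{\ell,n+1}.\]
Comparing this with $\phi_{BB'}^*(a_{i,n+1})=\sum_\ell(\Phi_{BB'}^L)_{i\ell}a_{\ell,n+1}$ and invoking uniqueness of coefficients yields $\Phi_{BB'}^L=\phi_B(\Phi_{B'}^L)\cdot\Phi_B^L$. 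The identity for $\Phi^R$ is proved by the mirror-image argument, expanding $\phi_{BB'}^*(a_{n+1,i})$ inside the free \emph{right} $\cl A_n$-submodule generated by $a_{n+1,1},\ldots,a_{n+1,n}$; alternatively one can deduce it from the $\Phi^L$ identity using the involution $x\mapsto\overline x$ together with Proposition \ref{Prop:Transpose}.

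I expect no serious obstacle here: the entire content is that $\phi^*$ is a homomorphism, plus the freeness that legitimizes reading off coefficients. The only point requiring genuine care is the compatibility $\phi_B^*|_{\cl A_n}=\phi_B$, since without it the coefficients $\phi_B^*\big((\Phi_{B'}^L)_{ij}\big)$ could in principle involve the index $n+1$ and fall outside $\cl A_n$, breaking the coefficient comparison; verifying this from the explicit generator formulas is routine but essential.
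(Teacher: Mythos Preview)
Your argument is correct and is essentially the standard proof. Note, however, that the paper does not actually give its own proof of this statement: it simply quotes the result from \cite{Ng05} (``originally proved in \cite{Ng05}''), so there is no in-paper proof to compare against. The approach in \cite{Ng05} is the same direct computation you outline --- expand $\phi_{BB'}^*(a_{i,n+1})$ using $\phi_{BB'}^*=\phi_B^*\circ\phi_{B'}^*$ and read off coefficients --- so your write-up matches the original source as well.
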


\begin{cor} The matrices $\phi_B(\Phi_{B^{-1}}^L)$ and $\phi_B(\Phi_{B^{-1}}^R)$  are the inverse of $\Phi_B^L$ and $\Phi_B^R$, respectively.
\label{CorPhiInv}
\end{cor}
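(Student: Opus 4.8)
The plan is to prove Corollary \ref{CorPhiInv} by applying the Chain Rule (Theorem \ref{ThmChainRule}) to the product $B \cdot B^{-1}$ and exploiting the fact that $\Phi_{\mathrm{id}}^L = \Phi_{\mathrm{id}}^R = I$, the identity matrix. First I would verify this base case: when $B$ is the trivial braid, the automorphism $\phi_B$ is the identity on $\cl A_n$, so $\phi_B^*$ fixes every $a_{i,n+1}$ and every $a_{n+1,i}$; comparing with the defining equations $\phi_B^*(a_{i,n+1}) = \sum_j (\Phi_B^L)_{ij} a_{j,n+1}$ and $\phi_B^*(a_{n+1,i}) = \sum_j a_{n+1,j}(\Phi_B^R)_{ji}$ forces $\Phi_{\mathrm{id}}^L = \Phi_{\mathrm{id}}^R = I$.

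Next I would instantiate the Chain Rule with $B' = B^{-1}$, so that $BB' = \mathrm{id}$. The left-hand formula gives
\[
I = \Phi_{BB^{-1}}^L = \phi_B\bigl(\Phi_{B^{-1}}^L\bigr)\cdot\Phi_B^L,
\]
which immediately exhibits $\phi_B(\Phi_{B^{-1}}^L)$ as a left inverse of $\Phi_B^L$. Symmetrically, the right-hand formula gives
\[
I = \Phi_{BB^{-1}}^R = \Phi_B^R\cdot\phi_B\bigl(\Phi_{B^{-1}}^R\bigr),
\]
so $\phi_B(\Phi_{B^{-1}}^R)$ is a right inverse of $\Phi_B^R$. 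Since these are square matrices over a (noncommutative, unital) ring, I would then promote the one-sided inverses to genuine two-sided inverses, either by invoking the analogous identity with the roles reversed or by a direct check.

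The main subtlety, and the step I expect to require the most care, is the noncommutativity of $\cl A_n$: over a noncommutative ring a one-sided inverse of a square matrix need not be a two-sided inverse in general. To close this gap cleanly I would apply the Chain Rule a second time to the product $B^{-1} \cdot B = \mathrm{id}$, yielding $I = \phi_{B^{-1}}(\Phi_{B}^L)\cdot\Phi_{B^{-1}}^L$. Applying the automorphism $\phi_B$ to this relation and using that $\phi_B \circ \phi_{B^{-1}} = \mathrm{id}$ produces $I = \Phi_B^L \cdot \phi_B(\Phi_{B^{-1}}^L)$, giving the complementary one-sided inverse and hence a genuine two-sided inverse for $\Phi_B^L$. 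The same argument applied to the $R$-matrices completes the proof; alternatively, one could simply observe that a left inverse together with a right inverse for the same element automatically coincide, which also suffices.
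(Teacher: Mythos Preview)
Your proof is correct and is precisely the intended argument: the paper states this result as an immediate corollary of the Chain Rule (citing \cite{Ng05}) without spelling out any details, and your derivation via $\Phi_{\mathrm{id}}^L=\Phi_{\mathrm{id}}^R=I$ together with the two instantiations $BB^{-1}=\mathrm{id}$ and $B^{-1}B=\mathrm{id}$ is exactly how one unpacks it. Your care about one-sided versus two-sided inverses over the noncommutative ring $\cl A_n$ is appropriate, and applying $\phi_B$ to the second Chain Rule identity is the clean way to close that gap.
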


\begin{thm} Let ${\bf A}$ be the matrix defined in (\ref{EqnAmatrixDefn}). Then for any $B\in B_n$,
	\[\phi_B({\bf A}) = \Phi_B^L\cdot {\bf A}\cdot \Phi_B^R.\]
\label{ThmPhiSqRt}
\end{thm}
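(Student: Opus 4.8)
The plan is to induct on the word length of $B$ in the generators $\sg_1^{\pm1},\ldots,\sg_{n-1}^{\pm1}$, using the Chain Rule (Theorem \ref{ThmChainRule}) to reduce the general statement to the case of a single generator. Write $S$ for the set of braids $B\in B_n$ satisfying $\phi_B({\bf A})=\Phi_B^L\,{\bf A}\,\Phi_B^R$. Since $B_n$ is generated by the $\sg_k$ and their inverses, it suffices to show that $S$ contains the identity and each $\sg_k$, and that $S$ is closed under products and inverses.

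For closure under products, suppose $B,B'\in S$. Because $\phi_B$ is an algebra automorphism it commutes with entrywise matrix multiplication, so applying $\phi_B$ to the relation for $B'$ and using $\phi_{BB'}=\phi_B\circ\phi_{B'}$ gives
\[\phi_{BB'}({\bf A})=\phi_B(\Phi_{B'}^L)\,\phi_B({\bf A})\,\phi_B(\Phi_{B'}^R).\]
Substituting the relation for $B$ and regrouping with the Chain Rule identities $\Phi_{BB'}^L=\phi_B(\Phi_{B'}^L)\Phi_B^L$ and $\Phi_{BB'}^R=\Phi_B^R\phi_B(\Phi_{B'}^R)$ yields $\phi_{BB'}({\bf A})=\Phi_{BB'}^L\,{\bf A}\,\Phi_{BB'}^R$, so $BB'\in S$. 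For closure under inverses, given $B\in S$ one applies $\phi_B$ to the candidate identity for $B^{-1}$; Corollary \ref{CorPhiInv} identifies $\phi_B(\Phi_{B^{-1}}^L)$ and $\phi_B(\Phi_{B^{-1}}^R)$ with $(\Phi_B^L)^{-1}$ and $(\Phi_B^R)^{-1}$, while the relation for $B$ collapses the right-hand side to ${\bf A}=\phi_{BB^{-1}}({\bf A})=\phi_B(\phi_{B^{-1}}({\bf A}))$. Since $\phi_B$ is injective, $B^{-1}\in S$.

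It remains to check the base and generator cases directly. The identity braid gives $\Phi^L=\Phi^R=I$, so the relation is trivially ${\bf A}=I\,{\bf A}\,I$. For $\sg_k$, reading the definition of $\phi_{\sg_k}$ on the generators $a_{i,n+1}$ shows that $\Phi_{\sg_k}^L$ is the identity outside the rows and columns indexed by $k,k+1$, where it has the block $\left(\begin{smallmatrix}-a_{k+1,k}&1\\1&0\end{smallmatrix}\right)$; Proposition \ref{Prop:Transpose} then produces the corresponding block $\left(\begin{smallmatrix}-a_{k,k+1}&1\\1&0\end{smallmatrix}\right)$ of $\Phi_{\sg_k}^R$. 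One then verifies $\phi_{\sg_k}({\bf A})=\Phi_{\sg_k}^L\,{\bf A}\,\Phi_{\sg_k}^R$ entrywise; since both $\Phi$-matrices agree with the identity away from indices $k,k+1$, only the entries ${\bf A}_{ij}$ with $i$ or $j$ in $\set{k,k+1}$ need be compared against the definition of $\phi_{\sg_k}$. I expect this last step to be the main obstacle: it is the only genuinely hands-on computation, and the care lies in correctly tracking the factors of $\mu$ and the signs built into ${\bf A}$ via (\ref{EqnAmatrixDefn}) as they interact with the $-a_{k+1,k}$ and $-a_{k,k+1}$ entries of the two $\Phi$-blocks and with the diagonal $1-\mu$ terms.
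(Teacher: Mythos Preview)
The paper does not actually prove Theorem \ref{ThmPhiSqRt}; it is quoted from \cite{Ng05} (see the sentence preceding Theorem \ref{ThmChainRule}). Your approach---reduce via the Chain Rule and Corollary \ref{CorPhiInv} to a direct check on each $\sg_k$---is correct and is the natural argument (and essentially the one in \cite{Ng05}). Your identification of the $2\times 2$ blocks of $\Phi_{\sg_k}^L$ and $\Phi_{\sg_k}^R$ is accurate, and the closure arguments under products and inverses are fine.

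The only incomplete part is the final entrywise check, which you flag but do not carry out. It does go through: for instance the $(k,k+1)$-entry of $\Phi_{\sg_k}^L{\bf A}\Phi_{\sg_k}^R$ is $-a_{k+1,k}(1-\mu)-\mu a_{k+1,k}=-a_{k+1,k}=\phi_{\sg_k}(a_{k,k+1})$, and the $(k,k)$-entry collapses to $1-\mu$ after the obvious cancellation. The remaining entries (those with exactly one index in $\{k,k+1\}$) each involve only a single row or column operation and match the lines of the definition of $\phi_{\sg_k}$ directly, once the $-\mu$ in ${\bf A}_{ij}$ for $i>j$ is kept track of. So there is no hidden obstruction; the ``main obstacle'' you anticipate is a short, mechanical verification.
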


Theorem \ref{ThmPhiSqRt} implies that the ideal $\cl I_B$ used to define $HC_0(K)\vert_{U=1}$ is generated by entries in ${\bf A}-{\bf\Lambda}\cdot\Phi^L_B\cdot{\bf A}$ and ${\bf A}-{\bf A}\cdot\Phi^R_B\cdot{\bf\Lambda}^{-1}$ only.

\subsection{Augmentations from KCH representations}
\label{SecP_KInducedKCH}
We now review the algebra $\cl P_K$, which is defined with elements of $\pi_K$, and describe the correspondence in (\ref{EqnCorr}). 

\begin{defn} Let $P_K$ denote the underlying set of the knot group $\pi_K$, where we write $[\gamma]\in P_K$ for $\gamma\in\pi_K$. In $\pi_K$ let $e$ be the identity, $m$ a choice of meridian, and $\ell$ the preferred longitude of $K$. Define $\cl P_K$ to be the noncommutative unital algebra freely generated over $R_0$ by $P_K$ modulo the relations:
	\en{
	\item $[e]=1-\mu$;
	\item $[m\gamma]=\mu[\gamma], [\gamma m]=[\gamma]\mu$ and $[\ell\gamma]=\lambda[\gamma], [\gamma\ell]=[\gamma]\lambda$, for any $\gamma\in\pi_K$;
	\item $[\gamma_1\gamma_2]-[\gamma_1m\gamma_2]=[\gamma_1][\gamma_2]$ for any $\gamma_1,\gamma_2\in\pi_K$.
	}
\label{DefnPi1Cords}
\end{defn}

\begin{thm}[\cite{Ng08}] $\cl C_K$ and $\cl P_K$ are isomorphic as $R_0$-algebras.
\label{ThmPi1Cords}
\end{thm}

The isomorphism of the theorem $F_{\cl P}:\cl P_K\to\cl C_K$ may be defined as follows. Suppose the basepoint $x$ for the group $\pi_K$ is on the boundary torus. Choose a fixed path $p$ from a point on $K$ to $x$ with interior in the tubular neighborhood $n(K)$. Let $\overline p$ denote $p$ with reversed orientation. If $g\in\pi_K$ is represented by a loop $\gamma$, define $F_{\cl P}([g])$ to be $\mu^{\text{lk}(\gamma,K)}$ times the cord given by the concatenation $p\gamma\overline p$ (here $\text{lk}(\gamma,K)$ is the linking number of $\gamma$ and $K$). We use this definition in Section \ref{SecHRkAugs}.

This identification of $\cl C_K$ with $\cl P_K$ uses a basepoint for $\pi_K$, as would be expected by the choice of $m$ in the definition of $\cl P_K$. The oriented boundary of a meridian disk of $n(K)$ that contains $x$ in its boundary is a representative of $m$.

\begin{rem}That $\cl P_K$ is defined to be an algebra over $R_0$ along with (2) implies the relations $[m\gamma]=[\gamma m]$ and $[\ell\gamma]=[\gamma\ell]$ for any $\gamma\in\pi_K$. 

Alternatively, construct the unital algebra $\wt{\cl P}_K$ freely generated over $\ints$ by $P_K\cup\{\lambda^{\pm1},\mu^{\pm1}\}$, modulo the relations (1), (2), (3), and the relation $\lambda\mu=\mu\lambda$. Then $\wt{\cl P}_K$ is isomorphic to the degree zero homology of the \emph{fully noncommutative knot DGA} (see the discussion in \cite{NgSurv12}).

Working with $\wt{\cl P}_K$, an analogue of Theorem \ref{ThmPi1Cords} has been found in work of K.\ Cieliebak, T.\ Ekholm, J.\ Latschev, and L.\ Ng.
\label{RemFullyNonComm}
\end{rem}

\begin{thm}[\cite{CELN}] There is an injective ring homomorphism $\wt{\cl P}_K\hookrightarrow \ints[\pi_K]$ with image generated by the peripheral subgroup $\langle\ell,m\rangle\subset \pi_K$ and elements of the form $\gamma-m\gamma$ where $\gamma\in\pi_K$.
\label{ThmGpRing}
\end{thm}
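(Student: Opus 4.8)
The plan is to construct the map $\wt{\cl P}_K\to\ints[\pi_K]$ explicitly on generators, verify that it respects the defining relations (1), (2), (3), and $\lambda\mu=\mu\lambda$, and then argue injectivity by identifying the image and producing a left-inverse. First I would define the map on generators by sending $\lambda\mapsto\ell$, $\mu\mapsto m$, and the generator $[\gamma]\in P_K$ to the group-ring element $(1-m)\gamma$, where I regard $\gamma\in\pi_K\subset\ints[\pi_K]$. This choice is forced by relation (1): applying the putative map to $[e]=1-\mu$ gives $(1-m)e=1-m$, matching $1-\mu\mapsto 1-m$. With this definition the image of a generator $[\gamma]$ is exactly an element of the form $\gamma-m\gamma$, so the stated description of the image—generated by the peripheral subgroup $\langle\ell,m\rangle$ together with elements $\gamma-m\gamma$—is automatic once well-definedness is checked.

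Next I would verify the relations. For relation (2), $[m\gamma]\mapsto(1-m)m\gamma = m(1-m)\gamma$ provided $m$ commutes appropriately; here one uses that $(1-m)m\gamma$ equals $m(\gamma-m\gamma)$, matching $\mu[\gamma]\mapsto m(1-m)\gamma$. The longitude relations go the same way using $\lambda\mapsto\ell$, and since $\ell$ and $m$ commute in $\pi_K$ (they lie in the peripheral torus subgroup), the imposed relation $\lambda\mu=\mu\lambda$ is respected. The substantive check is relation (3): under the map, $[\gamma_1\gamma_2]-[\gamma_1 m\gamma_2]\mapsto (1-m)\gamma_1\gamma_2-(1-m)\gamma_1 m\gamma_2 = (1-m)\gamma_1(1-m)\gamma_2$, which is precisely the product of the images $[\gamma_1][\gamma_2]$. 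This is a short but essential computation and confirms that the map is a ring homomorphism out of $\wt{\cl P}_K$.

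The main obstacle is injectivity, since well-definedness only shows the relations are not under-determined. My approach is to construct an explicit inverse on the image. On the subring generated by $\langle\ell,m\rangle$ the map is clearly a bijection onto $\ints[\langle\ell,m\rangle]$ via $\ell\leftrightarrow\lambda$, $m\leftrightarrow\mu$. For the elements $\gamma-m\gamma$, I would define a candidate inverse sending $\gamma-m\gamma\mapsto[\gamma]$ and extending by the relations, then check consistency: relation (3) is exactly what guarantees that products $(1-m)\gamma_1(1-m)\gamma_2$ pull back unambiguously to $[\gamma_1][\gamma_2]$, so no two distinct reduced words in $\wt{\cl P}_K$ can collapse. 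The delicate point is ruling out a kernel coming from the $\ints$-linear structure—one must show that a nonzero $\ints$-combination of generators cannot map to zero, which follows because the elements $\gamma-m\gamma$ for distinct cosets $m\gamma$ are $\ints$-linearly independent in $\ints[\pi_K]$. I expect the cleanest route is to cite or adapt the cord-algebra normal form from \cite{Ng08}, reducing every element of $\wt{\cl P}_K$ to an $R_0$-combination of words $[\gamma]$ and matching these against a $\ints$-basis of the image in $\ints[\pi_K]$; since this result is attributed to unpublished work, the proof in the paper likely proceeds by exhibiting exactly such a normal form and the corresponding linear independence.
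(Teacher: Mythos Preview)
The paper does not prove Theorem~\ref{ThmGpRing}; it is stated with the attribution ``see \cite{NgSurv12}'' and no argument is given (the preceding remark also notes that the analogue for the fully noncommutative setting is due to unpublished work of Cieliebak, Ekholm, Latschev, and Ng). So there is no proof in the paper to compare against.

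That said, your construction of the map and the verification that it respects relations (1)--(3) and $\lambda\mu=\mu\lambda$ are correct, and your identification of the image is immediate from the definition. The weak point is injectivity. Observing that the elements $(1-m)\gamma$ are $\ints$-linearly independent in $\ints[\pi_K]$ for $\gamma$ ranging over distinct right $\langle m\rangle$-cosets only tells you the map is injective on the $\ints$-span of the generators $[\gamma]$; it does not by itself rule out a nontrivial kernel element that is a genuine \emph{product} in $\wt{\cl P}_K$. Your suggestion to build a one-sided inverse on the image is the right instinct, but ``extending by the relations'' and invoking relation~(3) for products is not yet a proof: you have to show that every element of the image admits a canonical expression to which the inverse can be consistently applied. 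The normal-form strategy you mention at the end is indeed what is needed, and carrying it out carefully (or appealing to the cord-algebra structure results in \cite{Ng08} and \cite{NgSurv12}) is the actual content of the theorem---which is presumably why the paper defers to the literature rather than supplying an argument.
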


\subsection{Geometric view of augmentations}
Here we provide a rough description of a geometric source of augmentations. For more details see \cite[Section 6]{AENV}. 

Given a Legendrian $\Lambda$ in a contact manifold $Y$, and under some conditions on the pair $(Y,\Lambda)$ (see e.g.\ \cite{EES07}), there is an associated Legendrian (or Chekanov-Eliashberg) DGA $(\mathcal A(\Lambda), \bd(\Lambda))$ which, with an appropriate notion of equivalence, is invariant under Legendrian isotopy \cite{EES07}. 

As alluded to at the start of Section \ref{SecBG} the definition of $HC_0(K)\vert_{U=1}$ comes from a computation of $(\cl A(\Lambda_K),\bd(\Lambda_K))$, where $\Lambda_K$ is the unit conormal lift of $K$, which is a Legendrian in the standard contact structure on the unit cotangent bundle $ST^*\rls^3$. We remark that this does fall into the setting of \cite{EES07}, since $ST^*\rls^3$ is contactomorphic to the 1-jet bundle $J^1(S^2)$. In the following, we wish to work with the \emph{fully noncommutative} version of the Legendrian DGA, denoted $\wt{\cl A}(\Lambda_K)$ (cf.\ \cite[Remark 2.2]{EENS12}).

The DGA construction produces a contravariant functor from the category of Legendrians and exact Lagrangian cobordisms to the category of differential graded algebras. In particular, an exact Lagrangian \emph{filling} $L$ {--} a cobordism from the empty set {--} of a Legendrian $\Lambda$ induces a DGA map from $\wt{\cl A}(\Lambda)$ to the ground ring, which is identified with the DGA of the empty set, with zero differential. This induced map is a chain map, hence it is an augmentation.

In the symplectization of $ST^*\rls^3$, $\Lambda_K$ admits an exact Lagrangian filling $M_K$ with the topology of the knot complement. While the augmentation induced from $M_K$ has little information, one can keep track of the homotopy class in $\pi_1(M_K)=\pi_K$ of the boundary of rigid holomorphic disks and obtain a homomorphism $\Phi:\wt{\cl A}(\Lambda_K) \to \ints[\pi_K]$ such that $\Phi\circ\bd_K=0$. Consideration of 1-parameter families of holomorphic disks shows the image of $\Phi$ is the subring of $\ints[\pi_K]$ indicated in Theorem \ref{ThmGpRing}. It is conjectured that $\Phi$ induces an isomorphism on zero-graded homology, which would give a symplecto-geometric source for Theorem \ref{ThmGpRing} (this is not the approach taken by Cieliebak, Ekholm, Latschev, and Ng).

Let $\rho:\pi_K\to\text{GL}(V)$ be a KCH representation, $\mu_0$ the eigenvalue of $\rho(m)$ not equal to 1. The longitude $\ell$ commutes with $m$, hence the $\mu_0$-eigenspace of $\rho(m)$ is preserved by $\rho(\ell)$. Extending $\rho$ to $\ints[\pi_K]$, the definition of a KCH representation implies that $\rho(m), \rho(\ell),$ and $\rho(\gamma - m\gamma)$ (for any $\gamma\in \pi_K$) are each a linear map preserving the 1-dimensional $\mu_0$-eigenspace, and so each restricted to that eigenspace corresponds to multiplication by an element of $\bb F$. This lets us assign a scalar to each element in the image of $\Phi$. Identifying that image with $\wt{\cl P}_K$ (via Theorem \ref{ThmGpRing}), this assignment agrees with the definition of $\epsilon_\rho$ presented in Proposition \ref{ThmKCHReps}. Any augmentation induced from a KCH representation thus arises from a flat connection on $M_K$. Theorem \ref{ThmMain} says that all augmentations with $\epsilon(\mu)\ne1$ arise in this way.

\subsection{The augmentation induced from a KCH representation}
Let $S$ be a ring with 1. Generalize Definition \ref{DefKCHrep} by letting $V$ be a right $S$-module. In this context, we say $\rho:\pi_K\to\text{Aut}_S(V)$ is a KCH representation if there is $\mu_0$ in $S$ such that $1-\mu_0$ is invertible and there is a generating set $\set{e_1,\ldots, e_r}$ for $V$ such that $\rho(m)e_1=e_1\mu_0$ and $\rho(m)e_i=e_i$ for $2\le i\le r$. As in the introduction, there is a $\lambda_0$ such that $\rho(\ell)e_1=e_1\lambda_0$ since $\rho(m)$ commutes with $\rho(\ell)$; also $\mu_0,\lambda_0$ are units as $\rho(m)$ and $\rho(\ell)$ are invertible.

\begin{prop} If $\rho:\pi_K\to\text{Aut}_S(V)$ is a KCH representation with $\text{Ann}_S(e_1)=\{0\}$, then there is an induced augmentation $\epsilon_{\rho}:\wt{\cl P}_K\to S$ with $\epsilon_\rho(\mu)=\mu_0$ and $\epsilon_\rho(\lambda)=\lambda_0$.
\label{ThmKCHReps}
\end{prop}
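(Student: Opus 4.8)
The plan is to define $\epsilon_\rho$ on the free generators of $\wt{\cl P}_K$ and then verify that the three defining relations of Theorem \ref{ThmPi1Cords} (together with $\lambda\mu=\mu\lambda$) are respected, so that the map descends to the quotient. The natural guess, motivated by relation (3) and the cord picture, is to send a generator $[\gamma]$ to a scalar extracted from $\rho(\gamma)$ acting on the distinguished eigenvector $e_1$. Concretely, writing $\rho(\gamma)e_1 = \sum_i e_i\, c_i(\gamma)$ for coefficients $c_i(\gamma)\in S$ in terms of the generating set $\set{e_1,\ldots,e_r}$, I would set $\epsilon_\rho([\gamma])$ to be $(1-\mu_0)$ times the appropriate coefficient — heuristically the ``$e_1$-component of $(\rho(\gamma)-\rho(m\gamma))e_1$'' expressed through the invertible element $1-\mu_0$. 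This matches relation (1): taking $\gamma=e$ gives $\rho(e)=\text{Id}$, whose action on $e_1$ is $e_1$, and one reads off $\epsilon_\rho([e])=1-\mu_0=1-\epsilon_\rho(\mu)$. One also sets $\epsilon_\rho(\mu)=\mu_0$ and $\epsilon_\rho(\lambda)=\lambda_0$, which commute in $S$ since $\rho(m)$ and $\rho(\ell)$ commute, handling the relation $\lambda\mu=\mu\lambda$.

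First I would pin down the precise formula for $\epsilon_\rho([\gamma])$ and check the scaling relations (2). Since $\rho(m)e_1=e_1\mu_0$, the computation $\rho(m\gamma)e_1 = \rho(m)\rho(\gamma)e_1$ should produce a factor of $\mu_0$ on the relevant component once one uses that $e_1$ is the unique eigenvector with nontrivial eigenvalue and that $\rho(m)$ fixes $e_2,\ldots,e_r$; this is where the hypothesis $\text{Ann}_S(e_1)=\{0\}$ enters, guaranteeing that the coefficient of $e_1$ is well-defined and that cancellation of $\mu_0$ is legitimate. The relation $[\ell\gamma]=\lambda[\gamma]$ is analogous, using $\rho(\ell)e_1=e_1\lambda_0$ and the fact that $e_1$ is simultaneously an eigenvector for $\rho(\ell)$. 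The right-multiplication relations $[\gamma m]=[\gamma]\mu$ and $[\gamma\ell]=[\gamma]\lambda$ require a symmetric argument; here I expect to use that $\rho$ is a homomorphism and to track how the $e_1$-coefficient transforms when $\gamma$ is post-composed with $m$ or $\ell$, again leaning on the annihilator hypothesis to divide cleanly.

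The crux is relation (3): $[\gamma_1\gamma_2]-[\gamma_1 m\gamma_2]=[\gamma_1][\gamma_2]$. I would prove this by inserting a resolution of the identity adapted to the splitting of $V$ determined by $\rho(m)$. The key algebraic fact is that $\text{Id} - (1-\mu_0)^{-1}(\text{Id}-\rho(m))$ acts as the projection onto the $e_1$-line along $\langle e_2,\ldots,e_r\rangle$, since $\text{Id}-\rho(m)$ kills $e_2,\ldots,e_r$ and scales $e_1$ by $1-\mu_0$. Writing $\rho(\gamma_1\gamma_2)e_1 = \rho(\gamma_1)\rho(\gamma_2)e_1$ and comparing with $\rho(\gamma_1 m\gamma_2)e_1 = \rho(\gamma_1)\rho(m)\rho(\gamma_2)e_1$, the difference isolates $\rho(\gamma_1)\,(\text{Id}-\rho(m))\,\rho(\gamma_2)e_1$, and $(\text{Id}-\rho(m))$ projects onto the $e_1$-direction up to the factor $1-\mu_0$. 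Reading off the $e_1$-coefficient of the result should factor exactly as $\epsilon_\rho([\gamma_1])\cdot\epsilon_\rho([\gamma_2])$, with the two occurrences of $1-\mu_0$ in the definition combining correctly. This multiplicative splitting, and in particular keeping the noncommutative order straight so that $[\gamma_1][\gamma_2]$ (not $[\gamma_2][\gamma_1]$) emerges, is the main obstacle; it is essentially the representation-theoretic incarnation of the skein relation (3) and is where the rank-one nature of $\text{Id}-\rho(m)$ does the real work. Once (3) is verified, the map is a well-defined algebra homomorphism $\wt{\cl P}_K\to S$ with the asserted values on $\mu$ and $\lambda$, completing the proof.
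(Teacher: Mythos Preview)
Your proposal is correct and follows essentially the same approach as the paper: set $\epsilon_\rho([\gamma])=(1-\mu_0)$ times the $e_1$-coefficient of $\rho(\gamma)e_1$ (the paper writes this as $(1-\mu_0)\langle\rho(\gamma)e_1,e_1\rangle$ via a Kronecker-delta form), use $\text{Ann}_S(e_1)=\{0\}$ together with the identity $(\text{Id}-\rho(m))v=e_1(1-\mu_0)(\text{$e_1$-coefficient of }v)$ to make that coefficient well-defined, and then check the relations. The paper outsources the verification of relations (1)--(3) to \cite{NgSurv12} and concentrates on the well-definedness step in the non-free case; you spell out the relation checks (including the rank-one projection argument for (3)) more explicitly, but the underlying argument is the same.
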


\begin{proof}If $V$ is free then the proof of Theorem \ref{ThmKCHReps} is the same as that which is outlined for $S=\bb C$ in \cite{NgSurv12}. In this case, one chooses a basis of $V$ from the set $\{e_1,\ldots,e_r\}$ (which contains $e_1$ by force). Define a bilinear form so that $\langle e_i,e_j\rangle=\delta_{ij}$ on this basis (where $\delta_{ij}$ is the Kronecker-delta). Then $\epsilon_\rho$ is defined by setting $\epsilon_\rho(\mu)=\mu_0$, $\epsilon_\rho(\lambda)=\lambda_0$, and $\epsilon_\rho([\gamma])=(1-\mu_0)\langle\rho(\gamma)e_1,e_1\rangle$ for $\gamma\in\pi_K$. The map $\epsilon_\rho:\wt{\cl P}_K\to S$ is then determined. That $V$ is a right module is relevant to $\epsilon_\rho$ being well-defined. For example, if $\langle\rho(\gamma)e_1,e_1\rangle = s$ then $\langle\rho(m\gamma)e_1,e_1\rangle = \mu_0s$ uses the right action of $S$.

The definition of $\epsilon_\rho$ above is equally valid when $V$ is not free, despite $\langle v,w\rangle$ not being well-defined for general $w\in V$. For suppose that $\sum_{k=1}^re_kb_k = v = \sum_{k=1}^re_kc_k$ for elements $b_k,c_k\in S$, $k=1,\ldots r$. Then
	\[0 = \sum_{k=1}^re_k(b_k-c_k) - \rho(m)\sum_{k=1}^re_k(b_k-c_k) = e_1(1-\mu_0)(b_1-c_1).\]
As $\text{Ann}_S(e_1)=\{0\}$ and $1-\mu_0$ is invertible, $b_1=c_1$.\end{proof}

\begin{rem}Given another KCH representation $\rho':\pi_K\to\text{Aut}_S(V')$ and a linear isomorphism $\varphi:V'\to V$ such that $\varphi\circ\rho'(g) = \rho(g)\circ\varphi$ for all $g\in\pi_K$, the vectors $e_i' = \varphi^{-1}(e_i)$, $i=1,\ldots,r$, are eigenvectors of $\rho'(m)$ (with the same eigenvalue as $e_i$). Noting how the bilinear form we used depends on these eigenvectors, this implies that $\epsilon_\rho = \epsilon_{\rho'}$.
\label{RemConjugateReps}
\end{rem}

\begin{rem} There is a different version of Proposition \ref{ThmKCHReps} for representations of $\pi_K$ that generalize KCH representations. Suppose that there is a basis of an $\bb F$-vector space $V$ and $\rho:\pi_K\to\text{GL}(V)$ with which $\rho(m)={\footnotesize\begin{pmatrix}M_0&0\\ 0&\text{Id}\end{pmatrix}}$ for a $k\times k$ invertible matrix $M_0$ with $\text{Id}_k-M_0$ also invertible. Let $W\subset V$ be the subspace spanned by the first $k$ basis vectors. Then $\rho$ induces an augmentation $\epsilon:\wt{\cl P}_K\to\text{Mat}_k(\bb F)$ by setting $\epsilon(\mu)=M_0$ and $\epsilon([g])=(\text{Id}_W-M_0)\text{Proj}_W\rho(g)$.
\end{rem}

\section{\augreps and augmentations}
\label{SecKCHRepsandAugs}

In this section, after some inital remarks, we prove Theorem \ref{ThmMain} by constructing a representation from a certain universal augmentation. The construction is the content of Theorem \ref{ThmGenMain} in Section \ref{SecKCHFromAug}. In Section \ref{SecMerBasis} we then restrict to the case that the target of our augmentation is a field and address the irreducibility of inducing KCH representations. The proof of Theorem \ref{ThmMain} appears at the end of Section \ref{SecMerBasis}.

Fix a meridian $m$ of $K$. Consider a set $\Gamma=\{\gamma_1,\ldots,\gamma_r\}\subset\pi_K$, with $\gamma_1$ the identity, such that $\cl G=\setn{g_i}{g_i=\gamma_i^{-1}m\gamma_i, 1\le i\le r}$ generates $\pi_K$. 

Note from Theorem \ref{ThmPi1Cords} that any augmentation $\epsilon:\wt{\cl P}_K\to S$ has values that, for any $g,h\in\pi_K$, satisfy
	\begin{equation}	
	\begin{aligned}
	\epsilon([e])=1-\epsilon(\mu),\quad &\epsilon([mg])=\epsilon(\mu[g]), \epsilon([gm])=\epsilon([g]\mu),\\
					\text{and }\epsilon([gmh])&=\epsilon([gh])-\epsilon([g])\epsilon([h]).
	\end{aligned}
	\label{AugvalueRelns}
	\end{equation}

\subsection{The universal augmentation of $K$}
\label{SecUnivAug}

Define $E = \{[e]^n \mid n\ge 0\}$ and let $\cl Q_{K}$ be the localization $E^{\text{-}1}\cl P_K$. In $\cl P_K$ the set $E$ satisfies Ore's condition, hence the localization homomorphism is injective and $\cl Q_K$ is isomorphic to a ring of fractions. We may also define $\wt{\cl Q}_K$, the localization of $\wt{\cl P}_K$ with respect to $E$, though $E$ does not satisfy Ore's condition in $\wt{\cl P}_K$; for example, see \cite{Cohn95}. Though we cannot consider $\wt{\cl Q}_K$ as a ring of fractions containing $\wt{\cl P}_K$, the localization $\iota:\wt{\cl P}_K\to\wt{\cl Q}_K$ has the expected universal property, that if $f:\wt{\cl P}_K\to S$ is an $E$-inverting homomorphism then there is a unique $g:\wt{\cl Q}_K\to S$ such that $g\circ\iota = f$.

\begin{defn}The \emph{universal augmentation of $K$} is the map $\iota:\wt{\cl P}_K\to\wt{\cl Q}_{K}$, which may not be injective.
\label{DefUnivAug}
\end{defn}

The augmentations we consider all factor through the universal augmentation; further, if $\epsilon(\mu), \epsilon(\lambda)$ are central then they define a map on $\cl P_K$ and factor through $\cl P_K\hookrightarrow\cl Q_K$. We will abuse notation, writing $[g]$ for the image $\iota([g])$ in $\wt{\cl Q}_K$.

\subsection{Notation and setup}
\label{SecNotationSetup} Consider the direct sum $\oplus^r\wt{\cl Q}_K$ as a right $\wt{\cl Q}_K$-module. Write $[\Gamma g]$ for the element $([\gamma_1g],\ldots,[\gamma_rg])$ in $\oplus^r\wt{\cl Q}_K$. Also, for $1\le j\le r$, we define $v_j = [\Gamma\gamma_j^{-1}]$ and let $V$ be the right submodule over $\wt{\cl Q}_K$ generated by $\{v_1,v_2,\ldots,v_r\}$.

We need some preparatory lemmas to prove Theorem \ref{ThmGenMain}, which shows that the universal augmentation is induced from a representation. 

\begin{lem}For $h,h'\in\pi_K$, if $g=hh'$ then there are elements $c_1,\ldots,c_r\in\wt{\cl P}_K$ such that $[\Gamma g]=\sum_{k=1}^r[\Gamma h\gamma_k^{-1}]c_k$. Thus $[\Gamma g]\in V$.
\label{LemEltExp}
\end{lem}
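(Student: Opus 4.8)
The plan is to reduce the statement to a single algebraic identity and establish that identity by induction on word length; the assertion $[\gamma_\I g]\in V$ then follows formally. First I would observe that the closing claim is an immediate specialization of the main identity: taking $h=e$ (so that $g=h'$ and, since $\gamma_1=e$, each target vector $[\gamma_\I h\gamma_k^{-1}]$ equals $[\gamma_\I\gamma_k^{-1}]=v_k$), the identity becomes $[\gamma_\I g]=\sum_{k=1}^r v_k c_k$, which lies in $V$ by definition. Thus all the content is in producing the coefficients $c_k\in\wt{\cl P}_K$ for a general factorization $g=hh'$.

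Since $\cl G$ generates $\pi_K$, I would fix a word $h'=g_{j_1}^{\nu_1}\cdots g_{j_\ell}^{\nu_\ell}$ with each $\nu_t\in\{1,-1\}$ and each $g_{j_t}=\gamma_{j_t}^{-1}m\gamma_{j_t}$, and induct on $\ell$. In the base case $\ell=0$ we have $g=h$, and because $\gamma_1=e$ the choice $c_1=1$, $c_k=0$ for $k\ne1$ gives $[\gamma_\I g]=[\gamma_\I h\gamma_1^{-1}]$, as desired.

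For the inductive step I would peel off the leftmost letter, writing $h'=g_j^{\nu}h''$ with $h''$ of length $\ell-1$, so that $g=h\gamma_j^{-1}m^{\nu}\gamma_j h''$. The heart of the argument is to apply relation (3) of Theorem \ref{ThmPi1Cords} at the inserted $m^{\nu}$, after prepending each $\gamma_i$. When $\nu=1$, splitting at $\gamma_i h\gamma_j^{-1}\mid\gamma_j h''$ gives, componentwise, $[\gamma_i g]=[\gamma_i hh'']-[\gamma_i h\gamma_j^{-1}][\gamma_j h'']$; when $\nu=-1$, one splits instead at $\gamma_i h\gamma_j^{-1}\mid m^{-1}\gamma_j h''$ and uses relation (2) in the form $[m^{-1}\gamma]=\mu^{-1}[\gamma]$ (valid since $\mu$ is a unit) to get $[\gamma_i g]=[\gamma_i hh'']+[\gamma_i h\gamma_j^{-1}]\mu^{-1}[\gamma_j h'']$. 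The decisive point is that the trailing scalar $[\gamma_j h'']$ is independent of $i$, so each of these is a genuine vector identity $[\gamma_\I g]=[\gamma_\I hh'']\mp[\gamma_\I h\gamma_j^{-1}]\,s$ with $s=[\gamma_j h'']$ or $s=\mu^{-1}[\gamma_j h'']$ lying in $\wt{\cl P}_K$, the scalar multiplying on the right as befits the right-module structure. Applying the inductive hypothesis to the shorter element $hh''$ yields $[\gamma_\I hh'']=\sum_k[\gamma_\I h\gamma_k^{-1}]c_k'$ with $c_k'\in\wt{\cl P}_K$, and absorbing the extra multiple of $[\gamma_\I h\gamma_j^{-1}]$ into the $j$-th coefficient produces the required expression with every $c_k\in\wt{\cl P}_K$.

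I expect the only delicate point to be the bookkeeping of the split rather than any deep obstruction: one must prepend $\gamma_i$ uniformly and keep $h$ fixed on the left so that the ``collapsed'' term is exactly $[\gamma_\I hh'']$ (the vector attached to the shorter element $g''=hh''$) while the ``product'' term is exactly a target vector times a scalar. The sign of $\nu$ furnishes the only case distinction, handled by the two computations above. It is worth emphasizing that relation (1), and hence any inversion of $[e]=1-\mu$, is never invoked, which is precisely why the coefficients land in $\wt{\cl P}_K$ and not merely in the localization $\wt{\cl Q}_K$.
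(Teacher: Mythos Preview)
Your proposal is correct and follows essentially the same approach as the paper. Both arguments fix a word $h'=g_{i_1}^{e_1}\cdots g_{i_\ell}^{e_\ell}$ and peel off the leftmost letter using the identity $[a\gamma_j^{-1}m^{\nu}\gamma_j b]=[ab]-\nu[a\gamma_j^{-1}][m^{(\nu-1)/2}\gamma_j b]$ coming from relation (3); the paper iterates this into a single closed-form sum, whereas you phrase it as an explicit induction on $\ell$, but the content is identical.
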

\begin{proof} As $h'$ is a product of elements in $\cl G$, we write $h'=g_{i_1}^{\ve_1}\ldots g_{i_l}^{\ve_l}$, with $\ve_k=\pm1$ for $1\le k\le l$. In $\wt{\cl P}_K$, $[a\gamma_{i_k}^{-1}m^{\ve_k}\gamma_{i_k}b]=[ab] -\ve_k[a\gamma_{i_k}^{-1}][m^{{\tiny \frac{\ve_k-1}2}}\gamma_{i_k}b]$ for any $a,b\in\pi_K$ and $1\le k\le l$. Hence
	\[[\Gamma hh'] = [\Gamma h\gamma_1^{-1}]+\sum_{k=1}^l-\ve_k[\Gamma h\gamma_{i_k}^{-1}]([m^{{\tiny \frac{\ve_k-1}2}}\gamma_{i_k}w(k)])\]
where $w(k)=g_{i_{k+1}}^{\ve_{k+1}}\ldots g_{i_l}^{\ve_l}$. Taking $h=e$ and using $\iota$ gives that $[\Gamma g]\in V$.\end{proof}
\begin{rem}The elements $c_1,\ldots,c_r$ are chosen independently of $h$.
\label{RemEltExp}
\end{rem}

\begin{lem} Suppose that $c_1,\ldots,c_r\in\wt{\cl Q}_K$ are such that $\sum_{k=1}^rv_kc_k = 0$. If $g\in\pi_K$ then $\sum_{k=1}^r[\Gamma g\gamma_k^{-1}]c_k =0$.
\label{LemEltExp2}
\end{lem}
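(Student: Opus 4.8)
The plan is to realize left-translation by $g$ as a right-$\wt{\cl Q}_K$-linear endomorphism of the free module $\oplus^r\wt{\cl Q}_K$, so that any relation among the $v_k$ is automatically carried along. First I would record the single-generator expansion that already underlies Lemma \ref{LemEltExp}: applying relation (3) of Theorem \ref{ThmPi1Cords} with $\gamma_1=\gamma_i\gamma_l^{-1}$ and $\gamma_2=\gamma_l y$ to a generator $g_l=\gamma_l^{-1}m\gamma_l$ gives, for every $y\in\pi_K$,
\[
[\gamma_ig_ly]=[\gamma_iy]-[\gamma_i\gamma_l^{-1}][\gamma_l y]
\qquad\text{and}\qquad
[\gamma_ig_l^{-1}y]=[\gamma_iy]+[\gamma_i\gamma_l^{-1}]\mu^{-1}[\gamma_l y],
\]
where the second identity uses $[m^{-1}\gamma]=\mu^{-1}[\gamma]$, which holds in $\wt{\cl Q}_K$ because $\mu$ is invertible there. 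In vector form these read $[\gamma_\I g_l^{\pm1}y]=[\gamma_\I y]\mp v_l\,\sigma$, where the scalar $\sigma\in\wt{\cl Q}_K$ equals, up to the unit $\mu^{-1}$, the $l$-th coordinate $[\gamma_l y]$ of $[\gamma_\I y]$. The point I want to extract is that this correcting scalar depends on $y$ only through that one coordinate, so $[\gamma_\I y]\mapsto[\gamma_\I g_l^{\pm1}y]$ is the restriction to the vectors $[\gamma_\I y]$ of an honest right-linear self-map of $\oplus^r\wt{\cl Q}_K$.

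Second, I would prove the statement by induction on the length of a word for $g$ in the generating set $\cl G$. The base case $g=e$ is exactly the hypothesis, since $[\gamma_\I e\gamma_k^{-1}]=v_k$ and so $\sum_k[\gamma_\I e\gamma_k^{-1}]c_k=\sum_kv_kc_k=0$. For the inductive step, write $g=g_l^{\pm1}g'$ with $g'$ shorter and expand each $[\gamma_\I g_l^{\pm1}(g'\gamma_k^{-1})]$ using the identity above with $y=g'\gamma_k^{-1}$. Summing against the $c_k$ (kept on the right throughout) breaks the result into two pieces: the vector $\sum_k[\gamma_\I g'\gamma_k^{-1}]c_k$, which is zero by the inductive hypothesis, and a right multiple of $v_l$ whose scalar factor is, up to the unit $\mu^{\pm1}$, the sum $\sum_k[\gamma_l g'\gamma_k^{-1}]c_k$, i.e. the $l$-th coordinate of that same vanishing vector. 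Hence both pieces vanish and the relation persists for $g$. Equivalently, one may compose the right-linear maps $\vec w\mapsto\vec w-v_lw_l$ and $\vec w\mapsto\vec w+v_l\mu^{-1}w_l$ (with $w_l$ the $l$-th coordinate) along a word for $g$ to produce a single right-linear operator $\rho(g)$ satisfying $\rho(g)([\gamma_\I y])=[\gamma_\I gy]$ for all $y$, and then apply $\rho(g)$ to $\sum_kv_kc_k=0$; the induction is merely the unwinding of this composition.

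The step I expect to demand the most care is not conceptual but the bookkeeping of left versus right multiplication. The coefficients $c_k$ must stay on the right, and the scalar $[\gamma_l y]$ produced by the expansion also multiplies $v_l$ on the right, so the noncommutative products have to be kept in the order dictated by relations (2)--(3); in particular I would not commute any $\mu^{\pm1}$ or bracket past a $c_k$. Working in $\wt{\cl Q}_K$ (after applying $\iota$) is what legitimizes the $\mu^{-1}$ appearing in the $g_l^{-1}$ case, since $\mu$ is invertible there. Beyond this, the whole argument rests on the single expansion identity together with the observation that its correcting term is a coordinate of the input vector, which is precisely what forces left-translation to be right-linear and hence to preserve relations.
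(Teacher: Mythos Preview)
Your proposal is correct and is essentially identical to the paper's own argument: both prove the claim by induction on the word length of $g$ in the generators $\cl G$, using the single-generator expansion $[\gamma_\I g_l^{\pm1}y]=[\gamma_\I y]\mp v_l\,\mu^{(\,\pm1-1)/2}[\gamma_l y]$ to split $\sum_k[\gamma_\I g\gamma_k^{-1}]c_k$ into the inductively vanishing vector plus a right multiple of $v_l$ by its $l$-th coordinate. Your additional framing in terms of a right-$\wt{\cl Q}_K$-linear operator $\rho(g)$ is a nice conceptual gloss but does not change the underlying proof.
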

\begin{proof}The statement trivially holds if $g$ is the identity since $v_k=[\Gamma\gamma_k^{-1}]$. Let $g=g_i^\ve g'$ for some $g_i\in\cl G$ and $\ve=\pm1$. Letting $\delta=(\ve-1)/2$, if we suppose that the statement holds for $g'$ then
	\[\sum_{k=1}^r[\Gamma g\gamma_k^{-1}]c_k = \sum_{k=1}^r[\Gamma g'\gamma_k^{-1}]c_k -\ve [\Gamma\gamma_i^{-1}]\mu^\delta\sum_{k=1}^r[\gamma_i g'\gamma_k^{-1}]c_k = 0,\]
as $\sum[\gamma_i g'\gamma_k^{-1}]c_k$ is the $i$ coordinate of $\sum[\Gamma g'\gamma_k^{-1}]c_k=0$.
\end{proof}

\subsection{KCH representations from augmentations}
\label{SecKCHFromAug}

In this section we show that the universal augmentation is induced from a KCH representation, from which it will follow that the same is true of any augmentation sending $1-\mu$ to a unit.

\begin{thm} There is a well-defined KCH representation $\rho_\iota:\pi_K\to\text{Aut}_{\wt{\cl Q}_K}(V)$ that induces the universal augmentation $\iota:\wt{\cl P}_K\to\wt{\cl Q}_K$.
\label{ThmGenMain}
\end{thm}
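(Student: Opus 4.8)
The goal is to define an action $\rho_\iota(g)$ on the module $V$ for each $g\in\pi_K$, check it is a well-defined $\wt{\cl Q}_K$-module automorphism, verify that $g\mapsto\rho_\iota(g)$ is a homomorphism, confirm the KCH eigenvalue condition on a meridian $m$, and finally check that the induced augmentation $\epsilon_{\rho_\iota}$ of Proposition \ref{ThmKCHReps} equals $\iota$. The natural candidate, motivated by the setup in Section \ref{SecNotationSetup}, is to let $\rho_\iota(g)$ act on the generating vectors by $\rho_\iota(g)\,v_k=[\gamma_{\I}\,g\,\gamma_k^{-1}]$ (recalling $v_k=[\gamma_{\I}\gamma_k^{-1}]$, the case $g=e$), and extend $\wt{\cl Q}_K$-linearly. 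Lemma \ref{LemEltExp} guarantees that $[\gamma_{\I}\,g\,\gamma_k^{-1}]$ lands back in $V$, so the formula at least defines a map $V\to V$; Lemma \ref{LemEltExp2} is exactly the statement that the map respects all relations $\sum v_k c_k=0$ among the generators, so $\rho_\iota(g)$ is a \emph{well-defined} right-module endomorphism of $V$.

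\textbf{Homomorphism and invertibility.} Next I would verify multiplicativity. Using the defining formula twice, one computes $\rho_\iota(g)\rho_\iota(h)\,v_k=\rho_\iota(g)\,[\gamma_{\I}\,h\,\gamma_k^{-1}]$; here the vector $[\gamma_{\I}\,h\,\gamma_k^{-1}]$ must first be re-expanded in the generators $v_j$ via Lemma \ref{LemEltExp} (with the coefficients $c_j$, which by Remark \ref{RemEltExp} depend only on $h\gamma_k^{-1}$ and not on the outer $\gamma_{\I}$), and then Lemma \ref{LemEltExp2} applied with the element $g$ lets one reassemble the result as $[\gamma_{\I}\,gh\,\gamma_k^{-1}]=\rho_\iota(gh)\,v_k$. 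Thus $\rho_\iota(g)\rho_\iota(h)=\rho_\iota(gh)$, and since $\rho_\iota(e)$ sends $v_k\mapsto v_k$ it is the identity, whence each $\rho_\iota(g)$ is invertible with inverse $\rho_\iota(g^{-1})$. This is where the two preparatory lemmas do their real work, and I expect the bookkeeping of the re-expansion step to be the \textbf{main obstacle}: one must confirm that the coefficients produced by Lemma \ref{LemEltExp} are genuinely independent of the prefix (so that the abstract linear map, defined on generators, can be composed coordinatewise), and that Lemma \ref{LemEltExp2}'s hypothesis $\sum v_k c_k=0$ is applied to the correct relation. The relations \eqref{AugvalueRelns} coming from Theorem \ref{ThmPi1Cords} are the algebraic engine behind both lemmas.

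\textbf{KCH condition and recovering $\iota$.} To see $\rho_\iota$ is KCH, I would examine $\rho_\iota(m)$ on the generators. Since $v_1=[\gamma_{\I}\gamma_1^{-1}]=[\gamma_{\I}]$ (as $\gamma_1=e$), relation (2) gives $\rho_\iota(m)\,v_1=[\gamma_{\I}\,m]=v_1\mu$, so $v_1$ is an eigenvector with eigenvalue $\mu_0=\iota(\mu)$, which is a unit and satisfies $1-\mu_0$ invertible by construction of $\wt{\cl Q}_K$ as a localization inverting $[e]=1-\mu$. For the remaining generators one checks (using $g_i=\gamma_i^{-1}m\gamma_i$ and the relations) that $\rho_\iota(m)$ fixes a complementary spanning set, so that $\{v_1,\dots,v_r\}$ realizes the required generating set with $\rho_\iota(m)v_1=v_1\mu_0$ and $\rho_\iota(m)v_i=v_i$ for $i\ge2$; this is precisely the generalized KCH condition over the ring $S=\wt{\cl Q}_K$ from Section \ref{SecP_KInducedKCH}. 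Finally, to identify the induced augmentation with $\iota$, I would equip $V$ with the bilinear form of Proposition \ref{ThmKCHReps} picking out the first coordinate, check $\mathrm{Ann}_{\wt{\cl Q}_K}(v_1)=\{0\}$, and compute $\epsilon_{\rho_\iota}([\gamma])=(1-\mu_0)\langle\rho_\iota(\gamma)v_1,v_1\rangle=(1-\mu_0)[\gamma]\,(1-\mu_0)^{-1}=[\gamma]=\iota([\gamma])$, matching the $\mu,\lambda$ values by the eigenvalue computation. The last identification is essentially forced once the module is set up so that the first coordinate of $\rho_\iota(\gamma)v_1$ reproduces $[\gamma]$ up to the normalizing factor $1-\mu_0$.
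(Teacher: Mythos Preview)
Your overall architecture matches the paper's proof: the definition $\rho_\iota(g)v_j=[\gamma_{\I}g\gamma_j^{-1}]$, the use of Lemma~\ref{LemEltExp} to land in $V$, Lemma~\ref{LemEltExp2} for well-definedness, and Remark~\ref{RemEltExp} for the homomorphism property are all exactly how the paper proceeds. (A small misattribution: the ``reassembly'' step in the homomorphism argument is really Lemma~\ref{LemEltExp} applied with prefix $g$ together with Remark~\ref{RemEltExp}, not Lemma~\ref{LemEltExp2}; the latter is only needed for well-definedness.)

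There is, however, a genuine gap in your verification of the KCH condition. You assert that $\rho_\iota(m)v_i=v_i$ for $i\ge 2$, but this is false. Using relation~(3) one finds
\[
\rho_\iota(m)v_j=[\gamma_{\I}m\gamma_j^{-1}]=[\gamma_{\I}\gamma_j^{-1}]-[\gamma_{\I}][\gamma_j^{-1}]=v_j-v_1[\gamma_j^{-1}],
\]
which is not $v_j$ unless $[\gamma_j^{-1}]=0$. The paper remedies this by passing to the shifted generators $e_1=v_1$ and $e_j=v_j-v_1(1-\mu)^{-1}[\gamma_j^{-1}]$ for $j\ge 2$; it is these $e_j$ that satisfy $\rho_\iota(m)e_j=e_j$. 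Without this change of generating set the KCH hypothesis of Proposition~\ref{ThmKCHReps} is not met, and your subsequent computation of $\epsilon_{\rho_\iota}$ via the Kronecker-delta form is carried out on the wrong set. The paper's actual computation expresses $\rho_\iota(g)e_1$ in the $\{e_k\}$ and reads off the $e_1$-coefficient as $(1-\mu)^{-1}[g]$ by comparing first coordinates in the ambient $\oplus^r\wt{\cl Q}_K$; your shortcut ``first-coordinate'' heuristic happens to point at the right answer, but only after the shift to the $e_j$ is made.
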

\begin{rem}The analogous statement for $\cl P_K\hookrightarrow\cl Q_K$ also holds.
\end{rem}

\begin{proof}
Given $g\in\pi_K$ define $\rho_\iota(g)v_j = [\Gamma g\gamma_j^{-1}]$, for each $1\le j\le r$, which is an element of $V$ by Lemma \ref{LemEltExp}. Extend $\rho_\iota(g)$ to a $\wt{\cl Q}_K$-linear map. By Lemma \ref{LemEltExp2}, this determines a well-defined map on $V$.

We show below that $\rho_\iota(gh)=\rho_\iota(g)\rho_\iota(h)$, and it follows that $\rho_\iota(g)$ is invertible and $\rho_\iota:\pi_K\to\text{Aut}_{\wt{\cl Q}_K}(V)$ is a well-defined homomorphism.

Fix $1\le j\le r$. Choose any $h,h''\in\pi_K$ and set $h'=h''\gamma_j^{-1}$. Using the calculation from Lemma \ref{LemEltExp} we write $[\Gamma hh']$ as a sum $\sum_{k=1}^r[\Gamma h\gamma_k^{-1}]c_k$. By our definitions this implies
	\[\rho_\iota(hh'')v_j = [\Gamma hh'] = \sum_{k=1}^r[\Gamma h\gamma_k^{-1}]c_k=\rho_\iota(h)\left(\sum_{k=1}^rv_kc_k\right).\]
By Remark \ref{RemEltExp} the $c_k$ are independent of $h$. Hence we may set $h=e$ in the above equation and obtain $\sum_{k=1}^rv_kc_k=[\Gamma h']=\rho_\iota(h'')v_j$. Hence $\rho_\iota(hh'')=\rho_\iota(h)\rho_\iota(h'')$, showing $\rho_\iota$ is a homomorphism.

Recall that $\gamma_1=e$ which makes $g_1=m$. To see that $\rho_\iota$ is a \augrep we find a generating set $\{e_1,\ldots,e_r\}$ for $V$ as discussed in Section \ref{SecP_KInducedKCH}.

Set $e_1=v_1$ and $e_j=v_j-v_1(1-\mu)^{-1}[\gamma_j^{-1}]$ for $j=2,\ldots,r$. One finds that $\rho_\iota(m)e_1=e_1\mu$ and $\rho_\iota(m)e_j=e_j$ for $j=2,\ldots,r$, as $\rho_\iota(m)v_j = [\Gamma g_1\gamma_j^{-1}] = v_j - v_1[\gamma_j^{-1}]$ by (3) in Theorem \ref{ThmPi1Cords}.

To determine the induced augmentation (which by Proposition \ref{ThmKCHReps} exists since the first coordinate of $e_1$ is invertible), for given $g\in\pi_K$ we choose $c_1,\ldots,c_r\in\cl P_K$ as in Lemma \ref{LemEltExp}, with $h=e$, so that
	\al{
	\rho_\iota(g)e_1 = [\Gamma g] = \sum_{k=1}^rv_kc_k 	&= e_1\left(c_1 + (1-\mu)^{-1}\sum_{k=2}^r[\gamma_k^{-1}]c_k\right) + \sum_{k=2}^re_kc_k\\
										&= e_1(1-\mu)^{-1}\sum_{k=1}^r[\gamma_k^{-1}]c_k + \sum_{k=2}^re_kc_k,
	}
the last equality since $[\gamma_1^{-1}]=[e]=1-\mu$. Now, the first coordinate of $[\Gamma g]\in V$ is $[g]$ and the first coordinate of $\sum_{k=1}^rv_kc_k$ is $\sum_{k=1}^r[\gamma_k^{-1}]c_k$. Thus, $\rho_\iota(g)e_1 = e_1(1-\mu)^{-1}[g]+\sum_{k=2}^r e_kc_k$. Hence $(1-\mu)\langle\rho_\iota(g)e_1,e_1\rangle = [g] \in\wt{\cl Q}_K$ and the induced augmentation is $\iota:\wt{\cl P}_K\to\wt{\cl Q}_K$.
\end{proof}

\begin{cor}Let $\epsilon:\wt{\cl P}_K\to S$ (or, alternatively $\epsilon:\cl P_K\to S$) be an augmentation with $\epsilon(1-\mu)$ invertible in $S$. Then there is a KCH representation $\rho:\pi_K\to\text{Aut}_S(W)$ for some right $S$-module $W$, such that $\epsilon_\rho = \epsilon$.
\label{Inducing KCH reps}
\end{cor}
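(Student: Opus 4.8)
The plan is to reduce the general statement to Theorem \ref{ThmGenMain} by exploiting the universal property of the localization $\iota:\wt{\cl P}_K\to\wt{\cl Q}_K$ established in Section \ref{SecUnivAug}. The key observation is that Theorem \ref{ThmGenMain} already produces a KCH representation $\rho_\iota$ that induces the \emph{universal} augmentation; what remains is to push this representation forward along any given augmentation $\epsilon$, which factors through $\iota$.

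First I would verify that $\epsilon$ is $E$-inverting. Since $\epsilon([e])=\epsilon(1-\mu)=1-\epsilon(\mu)$ by the relations in (\ref{AugvalueRelns}), and this element is invertible in $S$ by hypothesis, every element of $E=\{[e]^n\mid n\ge0\}$ is sent to a unit. By the universal property of $\iota$ stated in Section \ref{SecUnivAug}, there is then a unique ring homomorphism $g:\wt{\cl Q}_K\to S$ with $g\circ\iota=\epsilon$. Next I would use $g$ to give $S$ the structure of a left $\wt{\cl Q}_K$-module (via $q\cdot s = g(q)s$), and form the tensor product $W = V\otimes_{\wt{\cl Q}_K}S$, a right $S$-module. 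Define $\rho = \rho_\iota\otimes\mathrm{id}_S:\pi_K\to\text{Aut}_S(W)$, i.e. $\rho(\gamma)(v\otimes s)=(\rho_\iota(\gamma)v)\otimes s$; this is well-defined because each $\rho_\iota(\gamma)$ is $\wt{\cl Q}_K$-linear, and it is a homomorphism since $\rho_\iota$ is.

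I would then check that $\rho$ is a KCH representation in the sense generalizing Definition \ref{DefKCHrep}. Taking the generating set $\{e_1,\ldots,e_r\}$ of $V$ from the proof of Theorem \ref{ThmGenMain} and setting $\bar e_i = e_i\otimes 1\in W$, these generate $W$ over $S$, and applying $-\otimes_{\wt{\cl Q}_K}S$ to the relations $\rho_\iota(m)e_1=e_1\mu$, $\rho_\iota(m)e_j=e_j$ yields $\rho(m)\bar e_1=\bar e_1\mu_0$ and $\rho(m)\bar e_j=\bar e_j$, where $\mu_0=\epsilon(\mu)=g(\mu)$; since $1-\mu_0=\epsilon(1-\mu)$ is invertible, the defining conditions hold. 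Finally, to identify the induced augmentation I would invoke Proposition \ref{ThmKCHReps} (after checking $\text{Ann}_S(\bar e_1)=\{0\}$, or simply tracking the formula directly): for $\gamma\in\pi_K$ the computation in Theorem \ref{ThmGenMain} gives $\rho_\iota(\gamma)e_1 = e_1(1-\mu)^{-1}[\gamma]+\sum_{k\ge2}e_kc_k$, and tensoring with $S$ produces $(1-\mu_0)\langle\rho(\gamma)\bar e_1,\bar e_1\rangle = g([\gamma]) = \epsilon([\gamma])$, so $\epsilon_\rho=\epsilon$ as claimed.

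The main obstacle I anticipate is the subtlety flagged in the paper itself: $E$ does \emph{not} satisfy Ore's condition in $\wt{\cl P}_K$, so $\wt{\cl Q}_K$ is a genuine noncommutative localization rather than a ring of fractions, and $\iota$ may fail to be injective. This means $V$ need not be free and one cannot manipulate fractions naively. The care required is to lean only on the \emph{universal property} of $\iota$ (which the paper explicitly asserts holds regardless of the Ore condition) to produce $g$, and to verify that $-\otimes_{\wt{\cl Q}_K}S$ is exactly the functor that transports the module-theoretic data of Theorem \ref{ThmGenMain} to $S$. The alternative alluded to in the statement, namely the version over $\cl P_K\hookrightarrow\cl Q_K$ when $\epsilon(\mu),\epsilon(\lambda)$ are central, is cleaner because there $E$ is Ore and $\cl Q_K$ is an honest ring of fractions, so the same tensoring argument goes through with $V$ more concretely understood.
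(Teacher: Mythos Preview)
Your proposal is correct and follows the same strategy as the paper: factor $\epsilon$ through the universal augmentation via the universal property to get $\epsilon':\wt{\cl Q}_K\to S$ (your $g$), then push $\rho_\iota$ forward along it. The only difference is packaging: rather than forming the abstract tensor product $V\otimes_{\wt{\cl Q}_K}S$, the paper defines $W$ concretely as the $S$-submodule of $\oplus^r S$ generated by $w_j=\epsilon'(v_j)$ (applying $\epsilon'$ coordinatewise) and sets $\rho(g)w_j=\epsilon'(\rho_\iota(g)v_j)$. The paper's $W$ is precisely the image of your tensor product under the natural map $V\otimes_{\wt{\cl Q}_K}S\to(\oplus^r\wt{\cl Q}_K)\otimes_{\wt{\cl Q}_K}S\cong\oplus^r S$, and this concrete realization makes the first coordinate of $w_1$ equal to $\epsilon(1-\mu)$, so $\text{Ann}_S(w_1)=\{0\}$ is immediate---which is exactly the wrinkle you flagged and left unresolved in the abstract version.
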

\begin{proof}There is a unique homomorphism $\epsilon':\wt{\cl Q}_K\to S$ such that $\epsilon'\circ\iota = \epsilon$. Let $V$ be as in Theorem \ref{ThmGenMain} and define $W$ to be generated over $S$ by vectors $w_j=\epsilon'(v_j)$ (here we apply $\epsilon'$ to each coordinate). The map $\rho(g)$ is defined by setting $\rho(g)w_j = \epsilon'(\rho_\iota(g)v_j)$. From Theorem \ref{ThmGenMain} it follows that $\rho$ is well-defined and $\epsilon_\rho([g]) = \epsilon([g])$.
\end{proof}
\begin{rem} Note that $w_j=\rho(\gamma_j^{-1})w_1$ for each $j=1,\ldots,r$.\label{RemInducingKCH}\end{rem}

\subsection{KCH representations on vector spaces and the meridian subspace}
\label{SecMerBasis}

We restrict our attention to the case $S=\bb F$ is a field. Let $\rho:\pi_K\to\text{GL}(V)$ be any KCH representation, $\dim V=d$. Take a basis for $V$ of eigenvectors $e_1,\ldots,e_d$ of $\rho(m)$ such that $\rho(m)e_1=\mu_0e_1$.
\begin{defn} Define $w_j = \rho(\gamma_j^{-1})e_1$ for each $1\le j\le r$. Define $W_{\rho}(\Gamma)=\bb F\langle w_1,w_2,\ldots,w_r\rangle$ to be the \emph{meridian subspace} of $V$.
\label{defn:MerSbSpace}
\end{defn}

\begin{lem}For $1\le i\le r$, the vector $w_i$ satisfies $\rho(g_i)w_i=\mu_0w_i$ and $W_{\rho}(\Gamma)$ is an invariant subspace.
\label{lem:BasisEigenvecs}
\end{lem}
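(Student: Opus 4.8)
The plan is to exploit the defining feature of a KCH representation: since $\rho(m)$ acts as $\mu_0$ on $e_1$ and as $1$ on each $e_k$ for $k\ge 2$, the operator $\rho(m)-\text{Id}$ annihilates $e_2,\ldots,e_d$ and scales $e_1$ by $\mu_0-1\ne 0$. Hence $\rho(m)-\text{Id}$ has rank one with image the line $\bb F e_1$. Every conjugate of $m$ inherits an analogous structure, and this is all that is needed.

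For the first claim I would compute directly. Writing $g_i=\gamma_i^{-1}m\gamma_i$, we have
\[\rho(g_i)w_i = \rho(\gamma_i^{-1})\rho(m)\rho(\gamma_i)\rho(\gamma_i^{-1})e_1 = \rho(\gamma_i^{-1})\rho(m)e_1 = \mu_0\,\rho(\gamma_i^{-1})e_1 = \mu_0 w_i,\]
using that $\mu_0\in\bb F$ is central. This establishes $\rho(g_i)w_i=\mu_0 w_i$.

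For the invariance of $W(\Gamma)$, the key observation is that
\[\rho(g_i)-\text{Id} = \rho(\gamma_i^{-1})\bigl(\rho(m)-\text{Id}\bigr)\rho(\gamma_i)\]
is again rank one, now with image $\rho(\gamma_i^{-1})(\bb F e_1)=\bb F w_i$, since the maps $\rho(\gamma_i^{\pm1})$ are invertible and $w_i=\rho(\gamma_i^{-1})e_1\ne 0$. Consequently, for every $v\in V$ there is a scalar $c\in\bb F$ with $\rho(g_i)v = v + c\,w_i$. Applying this with $v=w_j$ gives $\rho(g_i)w_j = w_j + c\,w_i\in W(\Gamma)$, so each $\rho(g_i)$ carries $W(\Gamma)$ into itself. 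Because $W(\Gamma)$ is finite-dimensional and each $\rho(g_i)$ is invertible, $\rho(g_i)W(\Gamma)=W(\Gamma)$, whence also $\rho(g_i^{-1})W(\Gamma)=W(\Gamma)$. Since $\cl G=\{g_1,\ldots,g_r\}$ generates $\pi_K$, it follows that $W(\Gamma)$ is invariant under all of $\rho(\pi_K)$.

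There is no serious obstacle: the lemma reduces entirely to the rank-one nature of $\rho(m)-\text{Id}$. The only points demanding care are the passage from invariance under the generating conjugates $g_i$ to invariance under their inverses, which I handle via finite-dimensionality, and verifying $w_i\ne 0$ so that the image of $\rho(g_i)-\text{Id}$ is genuinely the line $\bb F w_i\subseteq W(\Gamma)$.
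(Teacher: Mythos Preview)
Your proof is correct and follows essentially the same idea as the paper: both arguments hinge on the fact that $\rho(g_i)-\text{Id}=\rho(\gamma_i^{-1})(\rho(m)-\text{Id})\rho(\gamma_i)$ has image contained in $\bb F w_i$, so $\rho(g_i)w_j-w_j$ is a multiple of $w_i$. The one notable difference is that the paper records the scalar explicitly, obtaining $w_j-\rho(g_i)w_j=\epsilon_\rho([\gamma_i\gamma_j^{-1}])\,w_i$; this formula is used later in the proof of Lemma~\ref{DimWAndEpsDeg}, so you would eventually need to go back and extract it.
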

\begin{proof}
For each $1\le i\le r$ we have
\[\rho(g_i)w_i = \rho(\gamma_i^{-1}m)e_1 = \mu_0w_i.\]
By definition of $w_j$, we find that $w_j-\rho(g_i)w_j=\rho(\gamma_i^{-1})(\text{Id}_V-\rho(m))\rho(\gamma_i\gamma_j^{-1})e_1$ for each $1\le j\le r$. In addition, if $w = \sum_{k=1}^dc_ke_k$ then $(\text{Id}_V-\rho(m))w = (1-\mu_0)c_1e_1$. Taking $w=\rho(\gamma_i\gamma_j^{-1})e_1$, this indicates the equality
	\begin{equation}
w_j - \rho(g_i)w_j = (1-\mu_0)\langle\rho(\gamma_i\gamma_j^{-1})e_1,e_1\rangle w_i = \epsilon_\rho([\gamma_i\gamma_j^{-1}])w_i.
	\label{MerActionEqn}
	\end{equation}

where, as in Section \ref{SecP_KInducedKCH}, $\langle\cdot,\cdot\rangle$ is the bilinear form on $V$ given by the Kronecker-delta $\langle e_i,e_j\rangle=\delta_{ij}$. This proves the lemma since $\cl G$ generates $\pi_K$.
\end{proof}

We remark that equation (\ref{MerActionEqn}) will be important in Lemma \ref{DimWAndEpsDeg} below. The following lemma was shown for $\bb F=\bb C$ in \cite[\S 3.2]{C12}. The proof given there carries over to our setting. 

\begin{lem} If $\rho:\pi_K\to\text{GL}(V)$ is a KCH representation and $W\subset V$ a subspace on which the action of $\rho$ is the identity, then the quotient representation $\overline\rho:\pi_K\to\text{GL}(V/W)$ is a KCH representation and $\epsilon_\rho = \epsilon_{\overline\rho}$.
\label{LemTrivSubspc}
\end{lem}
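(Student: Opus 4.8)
The plan is to verify the two assertions in turn: first that $\overline\rho$ is again a KCH representation on $V/W$, and second that it induces the same augmentation as $\rho$. The key structural fact I would exploit is that $W$ is a $\rho$-invariant subspace (since $\rho$ acts as the identity on it), so the quotient representation $\overline\rho:\pi_K\to\text{GL}(V/W)$ is well-defined; this is what lets me even speak of $\overline\rho$. Throughout I will write $\overline v$ for the image of $v\in V$ in $V/W$.

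For the first assertion I would check the defining property of a KCH representation from Definition \ref{DefKCHrep}: that $\overline\rho(m)$ is diagonalizable with eigenvalue $1$ of multiplicity $\dim(V/W)-1$ and a single other eigenvalue. Since $W$ lies in the $1$-eigenspace of $\rho(m)$ (the action of $\rho$ on $W$ is trivial, so in particular $\rho(m)$ fixes $W$ pointwise), I would pick the eigenbasis $e_1,\ldots,e_d$ of $\rho(m)$ with $\rho(m)e_1=\mu_0 e_1$ as in Section \ref{SecMerBasis} and use that $W$ is spanned by eigenvectors all having eigenvalue $1$. The crucial point to verify is that $e_1\notin W$, equivalently that $\mu_0$ survives as an eigenvalue in the quotient; this follows because $\mu_0\ne1$ while $W$ meets only the $1$-eigenspace, so the $\mu_0$-eigenline maps isomorphically into $V/W$. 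Then $\overline\rho(m)$ acts on the classes $\overline e_i$ (dropping those $e_i\in W$) diagonally, with exactly one eigenvalue equal to $\mu_0\ne1$ and the rest equal to $1$, which is precisely the KCH condition. Irreducibility is not part of the claim, so I need not track it.

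For the second assertion I would compute $\epsilon_{\overline\rho}$ directly from the formula $\epsilon_\rho([\gamma])=(1-\mu_0)\langle\rho(\gamma)e_1,e_1\rangle$ appearing in the proof of Proposition \ref{ThmKCHReps}. Taking $\overline e_1$ as the distinguished eigenvector for $\overline\rho$, I would set up the analogous bilinear form on $V/W$ using the basis of surviving classes $\overline e_i$ (those with $e_i\notin W$), and observe that the first coordinate of $\overline\rho(\gamma)\overline e_1$ in this basis equals the first coordinate of $\rho(\gamma)e_1$ in the original basis, since the quotient map simply forgets the $W$-components and $e_1\notin W$ contributes a well-defined coefficient. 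Hence $\langle\overline\rho(\gamma)\overline e_1,\overline e_1\rangle=\langle\rho(\gamma)e_1,e_1\rangle$, and multiplying by $1-\mu_0$ gives $\epsilon_{\overline\rho}([\gamma])=\epsilon_\rho([\gamma])$ for every $\gamma\in\pi_K$; since both send $\mu\mapsto\mu_0$ and $\lambda\mapsto\lambda_0$, the augmentations agree.

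The main obstacle I anticipate is the bookkeeping around the eigenbasis: one must argue carefully that choosing $W$ to be spanned by a subset of the eigenvectors $\{e_2,\ldots,e_d\}$ is legitimate (i.e.\ that $W$, being in the $1$-eigenspace, admits a basis of eigenvectors of $\rho(m)$ that extends to an eigenbasis of $V$), and that this choice does not disturb the distinguished vector $e_1$. Once the basis is arranged so that $W=\emph{span}\{e_i : i\in I\}$ for some index set $I$ with $1\notin I$, the matching of first coordinates under the quotient becomes transparent and the equality of augmentations is immediate. I expect the referenced argument from \cite[\S 3.2]{C12} handles exactly this basis-compatibility point, so I would lean on it rather than reprove it.
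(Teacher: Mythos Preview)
Your proposal is correct and follows essentially the same line as the paper's argument (the proof, though deferred in print to \cite[\S 3.2]{C12}, appears commented out in the source): both use $e_1\notin W$ since $\mu_0\ne1$ while $W$ sits in the $1$-eigenspace, then verify that the $e_1$-coordinate defining the augmentation survives the quotient unchanged. The only cosmetic difference is that you re-choose $e_2,\ldots,e_d$ within the $1$-eigenspace so that $W$ becomes a coordinate subspace, whereas the paper lifts an arbitrary basis of $V/W$ to representatives $f_i'\in\mathrm{span}(e_2,\ldots,e_d)$; either device makes $\langle\rho(g)e_1,e_1\rangle=\langle\overline\rho(g)\overline e_1,\overline e_1\rangle$ immediate.
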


\begin{lem} Let $W\subseteq V$ be an invariant subspace of $\rho:\pi_K\to\text{GL}(V)$. Then either the action of $\rho$ restricted to $W$ is the identity, or $W_{\rho}(\Gamma)\subset W$.
\label{lem:MerSubspaceMinimal}
\end{lem}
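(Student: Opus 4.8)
The plan is to split on whether the distinguished eigenvector $e_1$ (the $\mu_0$-eigenvector of $\rho(m)$) lies in $W$. Since $W$ is $\rho(m)$-invariant and the minimal polynomial of $\rho(m)$ is $(x-1)(x-\mu_0)$ with $\mu_0\ne1$, the restriction $\rho(m)|_W$ is diagonalizable with eigenvalues in $\{1,\mu_0\}$; because the $\mu_0$-eigenspace of $\rho(m)$ on $V$ is exactly $\bb F e_1$, membership of $e_1$ in $W$ is equivalent to $\mu_0$ being an eigenvalue of $\rho(m)|_W$. The two alternatives in the statement will correspond to the two cases $e_1\in W$ and $e_1\notin W$.

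First I would dispose of the case $e_1\in W$. Here, because $W$ is $\rho(\pi_K)$-invariant, each generator $w_j=\rho(\gamma_j^{-1})e_1$ of the meridian subspace lies in $W$, so $W(\Gamma)\subseteq W$ and we land in the second alternative. This case is immediate.

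The substance is the case $e_1\notin W$, where I would show that $\rho$ acts as the identity on $W$. The key observation is that $\text{Id}_V-\rho(m)$ has rank one with image $\bb F e_1$, since $(\text{Id}_V-\rho(m))e_1=(1-\mu_0)e_1$ with $\mu_0\ne1$ while $e_2,\ldots,e_d$ are killed. Conjugating, $\text{Id}_V-\rho(g_i)=\rho(\gamma_i^{-1})(\text{Id}_V-\rho(m))\rho(\gamma_i)$ has image $\bb F w_i$ (this is exactly equation (\ref{MerActionEqn}) read for an arbitrary vector rather than only for the $w_j$). For any $w\in W$, invariance of $W$ gives $(\text{Id}_V-\rho(g_i))w\in W$, while the rank-one computation gives $(\text{Id}_V-\rho(g_i))w\in\bb F w_i$; hence $(\text{Id}_V-\rho(g_i))w\in W\cap\bb F w_i$. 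If $w_i\in W$ then $e_1=\rho(\gamma_i)w_i\in W$, contradicting $e_1\notin W$; so $w_i\notin W$, forcing $W\cap\bb F w_i=0$ and thus $\rho(g_i)w=w$. Since $\mathcal G=\{g_1,\ldots,g_r\}$ generates $\pi_K$, it follows that $\rho|_W=\text{Id}$, which is the first alternative.

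The main obstacle is recognizing and exploiting the rank-one structure of $\text{Id}_V-\rho(g_i)$: once one sees that its image is the single line $\bb F w_i$ and that $w_i\in W$ would pull $e_1$ into $W$, the rest reduces to invariance of $W$ and to $\mathcal G$ generating $\pi_K$. No delicate estimates or inductive base cases are needed.
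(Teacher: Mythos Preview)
Your proof is correct and follows essentially the same approach as the paper: split on whether $e_1\in W$, exploiting that $\text{Id}_V-\rho(m)$ has rank one with image $\bb F e_1$. The only cosmetic difference is in the case $e_1\notin W$: the paper observes directly that $\rho(m)|_W=\text{Id}$ and then uses invariance to get $\rho(g^{-1}mg)|_W=\text{Id}$ for every $g\in\pi_K$, while you instead pass to each conjugated operator $\text{Id}_V-\rho(g_i)$ and argue that $w_i\notin W$ (lest $e_1=\rho(\gamma_i)w_i\in W$) to conclude $\rho(g_i)|_W=\text{Id}$ generator by generator.
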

\begin{proof}
Given $x\in W$ we have that $(1-\mu_0)\langle x,e_1\rangle e_1 = (\text{Id}_V-\rho(m))x \in W$. Thus, as $1-\mu_0$ is a unit, either $e_1\in W$ or $\langle x,e_1\rangle=0$ for every $x\in W$. 

If $e_1\in W$, then $w_i=\rho(\gamma_i^{-1})e_1\in W$ for $i=1,\ldots,r$. This implies $W_{\rho}(\Gamma)\subset W$.

Alternatively it must be that $\rho(m)x=x$ for every $x\in W$. But then, for any $g\in\pi_K$ and any $x\in W$, we have $\rho(g^{-1}mg)x = x$ since $\rho(g)x\in W$. As $\pi_K$ is generated by conjugates of $m$, the action of $\rho$ on $W$ is the identity.
\end{proof}

Given an augmentation $\epsilon:\cl C_K\to\bb F$ that is induced from a KCH representation $\rho:\pi_K\to\text{GL}(V)$, Lemma \ref{LemTrivSubspc} states that if the action on $W\subset V$ is the identity then the quotient representation induces the same augmentation. Taking such a quotient sufficiently many times gives a KCH representation $\rho':\pi_K\to\text{GL}(V')$ such that $V'$ has no such subspace. Restricting to the meridian subspace $W_{\rho'}(\Gamma)$, and applying Lemmas \ref{lem:MerSubspaceMinimal} and \ref{lem:BasisEigenvecs}, we obtain the following corollary.

\begin{cor} If $\epsilon:\cl C_K\to\bb F$ is induced from a KCH representation, it is induced from a KCH irrep on the meridian subspace of some KCH representation.
\label{cor:W(G)IsIrrep}
\end{cor}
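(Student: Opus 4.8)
The plan is to assemble Lemmas \ref{LemTrivSubspc}, \ref{lem:MerSubspaceMinimal}, and \ref{lem:BasisEigenvecs} into a single construction, following the outline preceding the statement. By hypothesis $\epsilon$ is induced from some KCH representation $\rho:\pi_K\to\text{GL}(V)$. First I would pass to a representation that has no nonzero subspace on which the group acts as the identity. Whenever such a nonzero subspace $W$ exists, Lemma \ref{LemTrivSubspc} shows that the quotient $\overline\rho:\pi_K\to\text{GL}(V/W)$ is again a KCH representation inducing the same $\epsilon$, and $\dim(V/W)<\dim V$. Since $\dim V$ is finite, iterating this process terminates, and I would repeat it until arriving at a KCH representation $\rho':\pi_K\to\text{GL}(V')$ with $\epsilon_{\rho'}=\epsilon$ that admits no nonzero subspace on which $\rho'$ acts as the identity.

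Next I would restrict $\rho'$ to its meridian subspace $W(\Gamma)\subseteq V'$, writing $\rho''$ for this restriction. By Lemma \ref{lem:BasisEigenvecs}, $W(\Gamma)$ is an invariant subspace, and $\rho'(g_1)w_1=\rho'(m)w_1=\mu_0w_1$ with $\mu_0\ne1$ (as $1-\mu_0$ is invertible); in particular $W(\Gamma)\ne0$ and $\rho'$ is not the identity on $W(\Gamma)$. To see that $\rho''$ is a KCH representation, recall that $\rho'(m)$ is diagonalizable on $V'$ with $\mu_0$-eigenspace $\bb F e_1$ of multiplicity one and $1$-eigenspace of dimension $\dim V'-1$. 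Since $W(\Gamma)$ is $\rho'(m)$-invariant and contains $e_1=w_1$, the restriction $\rho''(m)$ is diagonalizable with $\mu_0$-eigenspace $W(\Gamma)\cap\bb F e_1=\bb F w_1$ of multiplicity one and all other eigenvalues equal to $1$, which is exactly Definition \ref{DefKCHrep}.

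For irreducibility, let $U\subseteq W(\Gamma)$ be a nonzero $\rho''$-invariant subspace; since $W(\Gamma)$ is $\rho'$-invariant, $U$ is $\rho'$-invariant, so Lemma \ref{lem:MerSubspaceMinimal} gives that either $\rho'$ acts as the identity on $U$ or $W(\Gamma)\subseteq U$. The first alternative is excluded by the defining property of $V'$, so $U=W(\Gamma)$ and $\rho''$ is a KCH irrep. Finally, to check $\epsilon_{\rho''}=\epsilon$, I would choose an eigenbasis of $\rho''(m)$ with $w_1$ as its $\mu_0$-eigenvector and its remaining members lying in the $1$-eigenspace of $\rho'(m)$, which does not involve $e_1$. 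Then the coefficient of $e_1=w_1$ in $\rho''(\gamma)w_1=\rho'(\gamma)e_1$ is the same whether read off in $W(\Gamma)$ or in $V'$, so $(1-\mu_0)\langle\rho''(\gamma)w_1,w_1\rangle$ agrees with $(1-\mu_0)\langle\rho'(\gamma)e_1,e_1\rangle$; hence $\epsilon_{\rho''}=\epsilon_{\rho'}=\epsilon$, and $\rho''$ is the desired KCH irrep on the meridian subspace of $\rho'$.

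The step I expect to require the most care is the first: one must verify that the iterated quotient genuinely exhausts all identity-action subspaces. A single quotient by the (unique maximal) trivial subspace need not suffice, since the preimage of a trivial subspace of the quotient need not be trivial, so the exhaustiveness relies on the strict drop in dimension at each stage rather than on a one-step argument. The remaining verifications—the KCH property and irreducibility of $\rho''$, and the equality $\epsilon_{\rho''}=\epsilon$—are then direct consequences of the cited lemmas, with the only subtlety being the compatible choice of eigenbases used to identify the two induced augmentations.
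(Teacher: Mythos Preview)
Your proposal is correct and follows essentially the same approach as the paper: iterate Lemma \ref{LemTrivSubspc} to reach a KCH representation with no nonzero trivially-acted subspace, then restrict to the meridian subspace and invoke Lemmas \ref{lem:BasisEigenvecs} and \ref{lem:MerSubspaceMinimal}. You supply more detail than the paper does---in particular the explicit check that the restriction $\rho''$ still induces $\epsilon$ and the termination argument for the iteration---but these are exactly the verifications the paper's outline leaves to the reader.
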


Now for an augmentation $\epsilon:\cl C_K\to\bb F$, define $\cl E(\Gamma)$ to be the $r\times r$ matrix over $\bb F$ having columns $\epsilon([\Gamma\gamma_j^{-1}])$ (note that these are the vectors $w_j$ from the KCH representation defined in Section \ref{SecKCHFromAug}).

\begin{lem} Let $\rho:\pi_K\to\text{GL}(V)$ be any KCH representation that induces $\epsilon$ and suppose $V$ has no proper subspace where the action of $\rho$ is the identity. Then
	\begin{enumerate}
		\item the rank of $\cl E(\Gamma)$ equals $\dim W_{\rho}(\Gamma)$;
		\item the dimension of any KCH irrep inducing $\epsilon$ equals $\dim W_{\rho}(\Gamma)$.
	\end{enumerate} 
\label{DimWAndEpsDeg}
\end{lem}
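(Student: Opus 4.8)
The plan is to realize $\text{rank}\,\cl E(\Gamma)$ as the dimension of the image of an explicit linear map out of the meridian subspace, and then to show that map is injective using the hypothesis on $V$. Concretely, I would fix the eigenbasis $e_1,\ldots,e_d$ and the form with $\langle e_i,e_j\rangle=\delta_{ij}$, and define $\alpha:W(\Gamma)\to\bb F^r$ by $\alpha(u)=\bigl((1-\mu_0)\langle\rho(\gamma_i)u,e_1\rangle\bigr)_{i=1}^r$. Since $\rho$ induces $\epsilon$, the formula $\epsilon([g])=(1-\mu_0)\langle\rho(g)e_1,e_1\rangle$ together with $w_j=\rho(\gamma_j^{-1})e_1$ gives $\alpha(w_j)_i=(1-\mu_0)\langle\rho(\gamma_i\gamma_j^{-1})e_1,e_1\rangle=\epsilon([\gamma_i\gamma_j^{-1}])=\cl E(\Gamma)_{ij}$. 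Thus $\alpha(w_j)$ is the $j$-th column of $\cl E(\Gamma)$, so $\alpha(W(\Gamma))$ is exactly the column space and $\text{rank}\,\cl E(\Gamma)=\dim\alpha(W(\Gamma))$.

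For part (1) it then remains to see that $\alpha$ is injective on $W(\Gamma)$. Here I would use the identity $(\text{Id}_V-\rho(m))v=(1-\mu_0)\langle v,e_1\rangle e_1$ from the proof of Lemma \ref{lem:BasisEigenvecs}: since $1-\mu_0$ is a unit and $e_1\neq0$, this shows $\langle v,e_1\rangle=0$ iff $\rho(m)v=v$. Applying it to $v=\rho(\gamma_i)u$ converts each coordinate condition $\langle\rho(\gamma_i)u,e_1\rangle=0$ into $\rho(\gamma_i^{-1}m\gamma_i)u=\rho(g_i)u=u$. Hence $u\in\ker\alpha$ precisely when $\rho(g_i)u=u$ for all $i$; as $\cl G=\{g_i\}$ generates $\pi_K$, this says $u$ lies in the fixed subspace of $\rho$, on which $\rho$ acts as the identity. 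By hypothesis $V$ has no nonzero such subspace, so $u=0$. Therefore $\alpha$ is injective and $\text{rank}\,\cl E(\Gamma)=\dim W(\Gamma)$.

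For part (2) I would exploit that $\cl E(\Gamma)$ is built solely from the values $\epsilon([\gamma_i\gamma_j^{-1}])$, so its rank depends only on $\epsilon$ and $\Gamma$, not on the chosen representation. Let $\rho':\pi_K\to\text{GL}(V')$ be any KCH irrep inducing $\epsilon$. A nonzero subspace on which $\rho'$ acts as the identity would be invariant, hence all of $V'$ by irreducibility, forcing $\rho'(m)=\text{Id}$ and contradicting that $\mu_0\neq1$ is an eigenvalue; so $V'$ satisfies the hypothesis of the lemma. Its meridian subspace $W'(\Gamma)$ is invariant by Lemma \ref{lem:BasisEigenvecs} and nonzero (it contains $e_1$), so $W'(\Gamma)=V'$ by irreducibility. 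Applying part (1) to $\rho'$ gives $\dim V'=\dim W'(\Gamma)=\text{rank}\,\cl E(\Gamma)$, and applying it to $\rho$ gives $\text{rank}\,\cl E(\Gamma)=\dim W(\Gamma)$; combining these yields $\dim V'=\dim W(\Gamma)$.

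The main obstacle is the injectivity step in part (1): one must correctly translate the vanishing of an $\bb F$-linear combination of columns of $\cl E(\Gamma)$ into a genuine fixed-vector statement inside $V$, and this is exactly where the eigen-identity for $\rho(m)$, the generation of $\pi_K$ by the $g_i$, and the no-identity-subspace hypothesis all enter simultaneously. Everything else is bookkeeping: identifying the columns of $\cl E(\Gamma)$ with $\alpha(w_j)$, and, for part (2), observing that irreducibility collapses the meridian subspace onto the whole representation while ruling out nonzero identity-subspaces.
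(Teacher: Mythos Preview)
Your proof is correct and follows essentially the same approach as the paper: both arguments identify the column relations of $\cl E(\Gamma)$ with linear relations among the $w_j$ via the identity $(\text{Id}_V-\rho(m))v=(1-\mu_0)\langle v,e_1\rangle e_1$, then use the no-identity-subspace hypothesis (together with generation of $\pi_K$ by $\cl G$) to force any such relation to be trivial. Your packaging of this comparison as injectivity of the explicit linear map $\alpha$ is a clean rephrasing of the paper's direct two-inequality argument (which instead uses equation (\ref{MerActionEqn})), and your part (2) is identical to the paper's.
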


\begin{proof}
As previously we write $w_j=\rho(\gamma_j^{-1})e_1$ for $j=1,\ldots,r$, and the vectors $w_1,\ldots,w_r$ generate $W_{\rho}(\Gamma)$. For $1\le j\le r$ let $c_j\in\bb F$ be such that $\sum_j c_jw_j=0$. From equation (\ref{MerActionEqn}) we see that for each $g_i\in\cl G$
\[0=(Id-\rho(g_i))\sum_{j=1}^r c_jw_j = \sum_{j=1}^r \epsilon([\gamma_i\gamma_j^{-1}])c_jw_i.\]
As no $w_i$ is zero, it must be that $\sum_j \epsilon([\gamma_i\gamma_j^{-1}])c_j=0$ for each $i$. This implies that $\dim W_{\rho}(\Gamma)$ is at least the rank of $\cl E(\Gamma)$.

In addition, if there are scalars $c_j, 1\le j\le r$ such that $\sum_j \epsilon([\gamma_i\gamma_j^{-1}])c_j=0$ for each $i$, then the same equality implies that $\rho(g_i)\sum_j c_jw_j = \sum_j c_jw_j$ for each $i$. Then $\sum_j c_jw_j\in W_{\rho}(\Gamma)$ is fixed by $\pi_K$ and so $\sum c_jw_j=0$ by hypothesis. Hence $\dim W_{\rho}(\Gamma)\le \text{rank of }\cl E(\Gamma)$, so they are equal.

To see (2) holds, if $\sg:\pi_K\to\text{GL}(X)$ is any KCH irrep inducing $\epsilon$ then $W_\sg(\Gamma)$, which is nonzero and invariant, is $X$. As no proper subspace of $X$ is acted upon trivially, $\dim X= \text{rank}(\cl E(\Gamma))=\dim W_{\rho}(\Gamma)$ by (1).
\end{proof}

We can now prove our main theorem.

\vspace*{6pt}
\noindent{\bf Theorem \ref{ThmMain}.}\ \
{\it Let $\epsilon:\cl C_K\to\bb F$ be an augmentation such that $\epsilon(\mu)\ne1$. Then a KCH irrep $\rho:\pi_K\to\text{GL}(V)$ can be constructed explicitly from $\epsilon$ with the property that $\epsilon_\rho = \epsilon$. Moreover, any KCH irrep that induces $\epsilon$ is isomorphic to $(V,\rho)$.
}
\vspace*{6pt}

\begin{proof}[Proof of Theorem \ref{ThmMain}]Given $\epsilon:\cl C_K\to\bb F$ with $\epsilon(\mu)\ne1$, there is a KCH representation $\rho$ that induces $\epsilon$, constructed in Corollary \ref{Inducing KCH reps}. This representation acts on the vector space generated by the columns of $\cl E(\Gamma)$, denoted by $V$. By Corollary \ref{cor:W(G)IsIrrep} a subspace of a quotient of $V$ is a KCH irrep that induces $\epsilon$, and by Lemma \ref{DimWAndEpsDeg} this KCH irrep also has dimension equal to the rank of $\cl E(\Gamma)$. Thus the original $V$ was irreducible.

If $\rho':\pi_K\to\text{GL}(V')$ is any other KCH irrep that induces $\epsilon$ then the meridian subspace is $V'$. Extend the assignment $w_j=\rho'(\gamma_j^{-1})e_1 \mapsto \epsilon([\Gamma\gamma_j^{-1}])$, for each $1\le j\le r$, to a linear map $\psi:V'\to V$. Examining the proof of Lemma \ref{DimWAndEpsDeg} we see $\psi$ is a well-defined isomorphism of vector spaces. It is also $\pi_K$-equivariant since (\ref{MerActionEqn}) and (\ref{AugvalueRelns}) together imply that $\psi\circ\rho' = \rho \circ \psi$ on a generating set of $\pi_K$.
\end{proof}

\section{Constructing augmentations from a matrix}
\label{SecAugReconstruct} 
As in the previous section, fix a meridian $m$ of $K$ and a set $\Gamma=\{\gamma_1,\ldots,\gamma_r\}\subset\pi_K$, with $\gamma_1$ the identity, such that $\cl G=\setn{g_i}{g_i=\gamma_i^{-1}m\gamma_i, 1\le i\le r}$ generates $\pi_K$. We find criteria sufficient for an $r\times r$ matrix over $\bb F$ to be $\cl E(\Gamma)$ (defined in Section \ref{SecMerBasis}) for some augmentation $\epsilon:\cl C_K\to\bb F$. The criteria are used to prove Theorem \ref{ThmConnSum}. We begin with the following lemma.

\begin{lem} Let $\epsilon,\epsilon':\cl C_K\to S$ be two augmentations satisfying $\epsilon([\gamma_i\gamma_j^{-1}])=\epsilon'([\gamma_i\gamma_j^{-1}])$ for every pair $1\le i,j\le r$, and such that $\epsilon(1-\mu)$ is invertible. Then $\epsilon'=\epsilon$.
\label{LemEGensAug}
\end{lem}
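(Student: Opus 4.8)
The plan is to show that two augmentations $\epsilon,\epsilon':\cl C_K\to S$ agreeing on all the values $\epsilon([\gamma_i\gamma_j^{-1}])$, with $\epsilon(1-\mu)$ invertible, must in fact agree on $[g]$ for \emph{every} $g\in\pi_K$. Since $\cl C_K\cong\cl P_K$ is generated over $R_0$ by the classes $[g]$, and both augmentations send $\mu,\lambda$ to the same (forced) values, this suffices. The natural device is Corollary \ref{Inducing KCH reps}: realize $\epsilon$ via a KCH representation $\rho:\pi_K\to\text{Aut}_S(W)$ built from $\epsilon$ itself, and exploit the formula $(1-\mu_0)\langle\rho(g)e_1,e_1\rangle=\epsilon([g])$ from Proposition \ref{ThmKCHReps}.

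First I would observe that the hypothesis fixes $\epsilon(\mu)=\epsilon'(\mu)$: taking $i=j=1$ gives $\epsilon([e])=\epsilon'([e])$, i.e. $1-\epsilon(\mu)=1-\epsilon'(\mu)$. (The equality $\epsilon(\lambda)=\epsilon'(\lambda)$ is determined once $\rho$ is, via the eigenvalue $\lambda_0$, so this poses no additional constraint.) Next, following Corollary \ref{Inducing KCH reps} and Remark \ref{RemInducingKCH}, construct from $\epsilon$ the module $W$ generated by vectors $w_j$ with $w_j=\rho(\gamma_j^{-1})w_1$, so that $\epsilon([\gamma_i\gamma_j^{-1}])=(1-\mu_0)\langle\rho(\gamma_i\gamma_j^{-1})w_1,w_1\rangle$. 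The key point is that the entries of the matrix $\cl E(\Gamma)$, namely $\epsilon([\gamma_i\gamma_j^{-1}])$, are precisely the data determining the action of each generator $g_i\in\cl G$ on $W$ through equation (\ref{MerActionEqn}): $w_j-\rho(g_i)w_j=\epsilon([\gamma_i\gamma_j^{-1}])w_i$. Thus the matrices $\cl E(\Gamma)$ of $\epsilon$ and $\epsilon'$ being equal means the representations $\rho,\rho'$ act identically on the generating vectors $w_j$ by every generator $g_i$, hence (since $\cl G$ generates $\pi_K$) by every group element.

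The main step is then to propagate equality from the generating data to all of $\pi_K$. Because $\cl G$ generates $\pi_K$, I would argue by induction on word length that $\rho(g)w_j=\rho'(g)w_j'$ under the identification $w_j\leftrightarrow w_j'$ given by matching the (equal) columns of $\cl E(\Gamma)$; the inductive step uses exactly the recursion in Lemma \ref{LemEltExp}/\ref{LemEltExp2}, which expresses $\rho(g)w_1=[\gamma_{\I}g]$ as a $\wt{\cl Q}_K$-combination of the $w_k$ with coefficients built from the augmentation values on shorter words. Since those coefficients are determined entirely by the entries $\epsilon([\gamma_i\gamma_j^{-1}])$ (via the relation $[a\gamma_{i}^{-1}m^{e}\gamma_{i}b]=[ab]-e[a\gamma_{i}^{-1}][m^{(e-1)/2}\gamma_{i}b]$), and these agree for $\epsilon$ and $\epsilon'$, the two representations produce identical expansions. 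Reading off the first coordinate then yields $\epsilon([g])=(1-\mu_0)\langle\rho(g)w_1,w_1\rangle=(1-\mu_0)\langle\rho'(g)w_1',w_1'\rangle=\epsilon'([g])$ for all $g$.

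The hard part will be bookkeeping the inductive expansion cleanly: one must check that each coefficient $c_k$ arising in Lemma \ref{LemEltExp}, when evaluated under the augmentation, is a polynomial expression in the matrix entries $\epsilon([\gamma_i\gamma_j^{-1}])$ and in $\mu_0$ alone — no genuinely new augmentation values should appear. This is plausible because the recursion strictly decreases word length in $h'$ and each reduction only introduces factors of the form $[\gamma_i^{-1}]=\epsilon([\gamma_i\gamma_1^{-1}])$ and powers of $\mu_0$, all of which are entries of $\cl E(\Gamma)$ (recalling $\gamma_1=e$) or functions of $\mu_0$. Once this closure property is verified, the invertibility of $\epsilon(1-\mu)=1-\mu_0$ guarantees the inner-product formula recovers $\epsilon([g])$ unambiguously, completing the proof.
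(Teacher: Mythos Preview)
Your proposal is correct in substance, and its operative mechanism is exactly the paper's: iterate the recursion of Lemma \ref{LemEltExp} to write $[g]=[\gamma_1 g]$ as a polynomial in $\mu$ and the elements $[\gamma_i\gamma_j^{-1}]$, so that any augmentation value $\epsilon([g])$ is determined by $\epsilon(\mu)$ and the matrix $\cl E(\Gamma)$. The paper simply does this directly in three lines, without passing through Corollary \ref{Inducing KCH reps} or equation (\ref{MerActionEqn}). Your representation-theoretic wrapper adds no content: the module $W$, the vectors $w_j$, and the action of $\rho(g_i)$ are all \emph{defined} from the very augmentation values you want to recover, so invoking them is just a repackaging of the recursion you yourself identify as the ``hard part'' in your last paragraph. (Note also that (\ref{MerActionEqn}) is derived in the field case; the algebra carries over, but that is one more reason to argue directly.)

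One point you wave past and the paper handles explicitly is $\lambda$. Saying ``$\epsilon(\lambda)=\epsilon'(\lambda)$ is determined once $\rho$ is, via the eigenvalue $\lambda_0$'' is circular, since $\rho$ is built from $\epsilon$. The paper's argument is clean and is the only place the invertibility hypothesis is actually used: once $\epsilon([g])=\epsilon'([g])$ holds for all $g$ (in particular for $g=\ell$), the relation $[\ell]=\lambda[e]=\lambda(1-\mu)$ gives $\epsilon(\lambda)\,\epsilon(1-\mu)=\epsilon'(\lambda)\,\epsilon(1-\mu)$, and cancelling the invertible factor yields $\epsilon(\lambda)=\epsilon'(\lambda)$. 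You should replace your parenthetical with this.
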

\begin{proof} Since $\cl P_K$ is generated by elements $[g]\in\pi_K$ over $R_0$ \comment{(see Theorem \ref{ThmPi1Cords})}we need to check that $\epsilon,\epsilon'$ agree on $\mu,\lambda$ and any $[g]\in\pi_K$. The equality $[\gamma_1\gamma_1^{-1}]=[e]=1-\mu$ implies $\epsilon(\mu)=\epsilon'(\mu)$ by our assumption.

Given $g\in\pi_K$, choose a product equal to $g$ of elements in $\cl G$ and their inverses. From the form of the elements $c_k$ determined in the proof of Lemma \ref{LemEltExp}, iterating the process there determines an expansion of $[g]=[\gamma_1g]$ solely in terms of powers of $\mu$ and elements $[\gamma_i\gamma_j^{-1}]$, $1\le i,j\le r$. The assumption on the homomorphisms $\epsilon$ and $\epsilon'$ implies that $\epsilon([g])=\epsilon'([g])$. 

Finally, $\epsilon(\lambda)\epsilon(1-\mu)=\epsilon([\ell])=\epsilon'([\ell])=\epsilon'(\lambda)\epsilon(1-\mu)$ and $\epsilon(1-\mu)$ is invertible. The result follows.
\end{proof}

\subsection{Matrices that determine an augmentation}
\label{SecAugReconstruct1}

Consider now each element of $\cl G$ (and the inverses) as formal words $g_i^{\pm1}=\gamma_i^{-1}m^{\pm1}\gamma_i$, $i=1,\ldots,r$. For $\cl R$, a set of words in $\cl G$ and its inverses, denote the set of formal inverses of words in $\cl R$ by $\cl R^{-1}$. Given an $r\times r$ matrix $E$ write $E_{ij}$ for the $(i,j)$-entry in $E$. Following the proof of Lemma \ref{LemEGensAug}, use the explicit expansion determined by iterating Lemma \ref{LemEltExp} and the assignments $\epsilon(\gamma_i\gamma_j^{-1})=E_{ij}$ and $\epsilon(\mu)=1-E_{11}$ to assign a value $\epsilon_E(g)$ to a word $g=\gamma_1 g\gamma_1^{-1}$, written in $\{\gamma_i\gamma_j^{-1} | 1\le i,j\le r\}\cup\{m^{\pm1}\}$. For $[g]\in\pi_K$, the element represented by the word $g$, we find conditions on $E$ such that $\epsilon([g])=\epsilon_E(g)$ determines a well-defined augmentation.

\begin{lem} Let $\langle\cl G\mid\cl R\rangle$ be a presentation of $\pi_K$ with meridian generators and notation as above. Given an $r\times r$ matrix $E$, define a function $\epsilon:\cl C_K\to\bb F$ by setting $\epsilon(\mu)=1-E_{11}$ and $\epsilon([g])=\epsilon_E(g)$ for $[g]\in\pi_K$. Then $\epsilon$ is a well-defined augmentation if the diagonal entries of $E$ are all equal and not 0 or 1, and $\epsilon_E(\gamma_iR\gamma_j^{-1})=E_{ij}$ for every $R\in\cl R\cup \cl R^{-1}$ and each $1\le i,j\le r$.
\label{LemAugFomE}
\end{lem}
\begin{proof}We have defined $\epsilon(\mu)=\mu_0$ so that $1-\mu_0$ equals any diagonal entry of $E$.

Let $F$ be the free group generated by $\cl G$ and its inverses. Since $\pi_K\cong\langle\cl G\mid\cl R\rangle$ is the quotient of $F$ by the smallest normal subgroup containing $\cl R$, if $g$ and $h$ represent the same element in $\pi_K$ there is a finite sequence of allowed moves on the words $g$ and $h$, after which the resulting words agree. The allowed moves are the insertion or deletion into a word of either (1) a cancelling pair $xx^{-1}$ or $x^{-1}x$, $x\in\cl G$, or (2) an element of $\cl R$ or its inverse in $\cl R^{-1}$.

We must show that $\epsilon([g])=\epsilon([h])$ when $[g]=[h]$ in $\pi_K$. By the previous paragraph it is sufficient to prove that if $h$ may be obtained from $g$ by just one allowable insertion move then $\epsilon_E(g)=\epsilon_E(h)$. First suppose the insertion is a cancelling pair. Let $\varepsilon=\pm1$ and $\delta_\pm=(\pm \varepsilon-1)/2$ and apply (\ref{AugvalueRelns}) and $\epsilon_E(\gamma_k\gamma_k^{-1})=E_{kk}=1-\mu_0$ to see that
	\al{\epsilon_E(\gamma_i\gamma_k^{-1})\epsilon_E(\gamma_k g_k^{-\varepsilon}\gamma_j^{-1})
			&= \epsilon_E(\gamma_i\gamma_k^{-1})\left(\epsilon_E(\gamma_k\gamma_j^{-1}) +\varepsilon\mu_0^{\delta_-}\epsilon_E(\gamma_k\gamma_k^{-1})\epsilon_E(\gamma_k\gamma_j^{-1})\right)\\
			&= \mu_0^{-\varepsilon}\epsilon_E(\gamma_i\gamma_k^{-1})\epsilon_E(\gamma_k\gamma_j^{-1}).
	}
From this, and the fact that $\delta_+-\varepsilon = \delta_-$, we calculate
\al{\epsilon_E(\gamma_i g_k^\varepsilon g_k^{-\varepsilon}\gamma_j^{-1})	
		&= \epsilon_E(\gamma_i g_k^{-\varepsilon}\gamma_j^{-1}) -\varepsilon\mu_0^{\delta_+-\varepsilon} \epsilon_E(\gamma_i\gamma_k^{-1})\epsilon_E(\gamma_k\gamma_j^{-1})\\
		&= \epsilon_E(\gamma_i\gamma_j^{-1}).
	}

Now to compare $\epsilon_E(g)$ to $\epsilon_E(h)$ we expand both words to be expressed completely in terms of $\set{\epsilon_E(\gamma_i\gamma_j^{-1})\mid 1\le i,j\le r}$ except that for some $k,i_0,j_0$ the expansion of $h$ involves $\epsilon_E(\gamma_{i_0}g_k^{\pm1}g_k^{\mp1}\gamma_{j_0}^{-1})$. But we've shown our procedure assigns this the same value as $\epsilon_E(\gamma_{i_0}\gamma_{j_0}^{-1})$, and thus $\epsilon_E(g)=\epsilon_E(h)$.
	
	If the insertion is of the second type, there is some $R\in\cl R\cup\cl R^{-1}$ inserted into $h$, and this is the only difference between $g$ and $h$ as words. Applying the same argument as in the previous paragraph, and using the assumption that $\epsilon_E(\gamma_{i_0}R\gamma_{j_0}^{-1})=E_{i_0j_0}=\epsilon_E(\gamma_{i_0}\gamma_{j_0}^{-1})$, we see that $\epsilon_E(g)=\epsilon_E(h)$. 
	
	By the definition of $\epsilon$	and the isomorphism $\cl C_K\cong\cl P_K$, we only need to check that $\epsilon([\ell g])=\epsilon([g\ell])$ for any $g\in\pi_K$. But since $\ell$ commutes with $m$,
		\[\epsilon(\mu^{-1}[\ell])\epsilon([g])=\epsilon(\mu^{-1}[\ell g])-\epsilon(\mu^{-1}[\ell mg])=(\mu_0^{-1}-1)\epsilon([\ell g]).\]
	By considering $\epsilon([g])\epsilon(\mu^{-1}[\ell])$, this also equals $(\mu_0^{-1}-1)\epsilon([g\ell])$. Thus $\epsilon([\ell g])=\epsilon([g\ell])$ since $\mu_0\ne1$.
\end{proof}

\subsection{Connect sums}
\label{SecConnectSums}
Recall the definition of the rank of an augmentation, and the augmentation rank of a knot (Definition \ref{DefnAugRank}). In \cite[Prop.\ 5.8]{Ng08} the augmentation variety $V_{K_1\# K_2}$ was related to $V_{K_1}$ and $V_{K_2}$. Here we relate the rank of the augmentations in each.

\vspace*{6pt}
\noindent{\bf Theorem \ref{ThmConnSum}.}\ \
{\it Let $K_1,K_2\subset \rls^3$ be oriented knots and suppose $1\ne\mu_0\in\bb F^*$ is such that for $n=1,2$ there is an augmentation $\epsilon_n:\cl C_{K_n}\to\bb F$ with rank $d_n$ and so that $\epsilon_n(\mu)=\mu_0$. Then $K_1\# K_2$ has an augmentation with rank $d_1+d_2-1$. Furthermore, $\text{ar}(K_1\# K_2,\bb F)=\text{ar}(K_1,\bb F)+\text{ar}(K_2,\bb F)-1$.
}
\vspace*{6pt}

\begin{proof}
For $n=1,2$ let $\langle\cl G_n\mid\cl R_n\rangle$ be a presentation of $\pi_{K_n}$, where $\cl G_n$ is a set of meridians. Define $r_n=\abs{\cl G_n}$. Let $m_n$ be a meridian such that $[m_nh]=\mu[h]$ (for all $h\in\pi_{K_n}$) in the cord algebra $\cl C_{K_n}$. We may assume that $m_n\in\cl G_n$. Define $\cl G=(\cl G_1\setminus\{m_1\})\cup(\cl G_2\setminus\{m_2\})\cup\{m\}$. Order $\cl G$ so that $m=g_1$ is first and the last $r_2-1$ elements are in $\cl G_2$. Write $g_i=\gamma_i^{-1}m\gamma_i$ for the $i^{th}$ generator, $i=1,\ldots,r_1+r_2-1$.

The group $\pi_{K_1\# K_2}$ has a presentation $\langle\cl G\mid\cl R\rangle$ where $\cl R=\cl R_1\cup\cl R_2$ except that in each $\cl R_n$, $m_n$ is replaced by $m$. We may take $\cl R$ to have $r_1+r_2-2$ relators (with $r_n-1$ from each summand). In fact, we may assume (and do) each relator to have the form $R=g_{\ell}g_mg_{\ell}^{-1}g_k^{-1}$, where $g_k, g_{\ell}, g_m\in\cl G_n$ if $R\in\cl R_n$.
 
Define an $(r_1+r_2-1)\times(r_1+r_2-1)$ matrix $E$ by setting 
	\[(E)_{ij} =\begin{cases} \epsilon_1([\gamma_i\gamma_j^{-1}])\text{, if }i,j\le r_1\\
\epsilon_2([\gamma_i\gamma_j^{-1}])\text{, if }i,j>r_1\\
\frac1{1-\mu_0}\epsilon_1([\gamma_i])\epsilon_2([\gamma_j^{-1}])\text{, if }i\le r_1<j\\
\frac1{1-\mu_0}\epsilon_2([\gamma_i])\epsilon_1([\gamma_j^{-1}])\text{, if }j\le r_1<i \end{cases}\]

To each word $g$ in the generators $\cl G$ and their inverses we define $\epsilon_E(g)$ as in Section \ref{SecAugReconstruct1}. Note that since $\epsilon_n(\mu) = \mu_0$ for both $n=1,2$ the definition of $E$ is such that the diagonal entries all agree. By Lemma \ref{LemAugFomE} then, this defines an augmentation $\epsilon:\cl C_{K_1\# K_2}\to\bb C$ provided that $\epsilon_E(\gamma_iR\gamma_j^{-1})=\epsilon_E(\gamma_i\gamma_j^{-1})$ for each $R\in\cl R \cup \cl R^{-1}$. We will need to use the following observation, which the reader may check using an argument similar to that in the proof of Lemma \ref{LemAugFomE}. 
\begin{enumerate}
\item[($\ast$)] If $\gamma_k^{-1}m\gamma_k=g_k\in\cl G_n$, and $h$ represents any element in $\pi_{K_{n'}}\subset\pi_{K_1\# K_2}$ where $n'\ne n$, then $\epsilon_E(\gamma_kh)=\frac1{1-\mu_0}\epsilon_n([\gamma_k])\epsilon_{n'}([h])$ and $\epsilon_E(h\gamma_k^{-1})=\frac1{1-\mu_0}\epsilon_n([\gamma_k^{-1}])\epsilon_{n'}([h])$.
\end{enumerate}

Let $R\in\cl R$ and consider $\epsilon_E(\gamma_iR\gamma_j^{-1})$ for some $1\le i,j\le r_1+r_2-1$. The check for $R\in\cl R^{-1}$ is essentially the same.

If both $i,j\le r_1$ and $R\in\cl R_1$ then $\epsilon_E(\gamma_iR\gamma_j^{-1})=\epsilon_1([\gamma_iR\gamma_j^{-1}])$ and this equals $\epsilon_1([\gamma_i\gamma_j^{-1}])=\epsilon_E(\gamma_i\gamma_j^{-1})$ since $\epsilon_1$ is well-defined. We have a similar argument when both $i,j>r_1$ and $R\in\cl R_2$.

If $i\le r_1<j$, then by ($\ast$) we see that $\epsilon_E(\gamma_iR\gamma_j^{-1})$ is either $\frac1{1-\mu_0}\epsilon_1([\gamma_iR])\epsilon_2([\gamma_j^{-1}])$ or $\frac1{1-\mu_0}\epsilon_1([\gamma_i])\epsilon_2([R\gamma_j^{-1}])$
depending on whether $R\in\cl R_1$ or $R\in\cl R_2$. In both cases this equals $\frac1{1-\mu_0}\epsilon_1([\gamma_i])\epsilon_2([\gamma_j^{-1}]) = \epsilon_E(\gamma_i\gamma_j^{-1})$. The case when $j\le r_1<i$ is similar. 

Finally, suppose both $i,j\le r_1$ but $R\in\cl R_2$. Here we use our assumption on the form of relators: that $R=g_{\ell}g_mg_{\ell}^{-1}g_k^{-1}$ for some $g_k, g_{\ell}, g_m\in\cl G_2$. Thus using our definition of $\epsilon$ and ($\ast$) we find that
{\small
\al{
\epsilon_E(\gamma_iR\gamma_j^{-1}) &=\epsilon_E(\gamma_ig_k^{-1}\gamma_j^{-1})-\epsilon_E(\gamma_ig_{\ell}\gamma_m^{-1})\epsilon_E(\gamma_mg_{\ell}^{-1}g_k^{-1}\gamma_j^{-1})\\
&=\epsilon_E(\gamma_i\gamma_j^{-1})+\mu_0^{-1}\epsilon_E(\gamma_i\gamma_k^{-1})\epsilon_E(\gamma_k\gamma_j^{-1})-\epsilon_E(\gamma_ig_{\ell}\gamma_m^{-1})\epsilon_E(\gamma_mg_{\ell}^{-1}g_k^{-1}\gamma_j^{-1})\\
&=\epsilon_E(\gamma_i\gamma_j^{-1})+\frac{\epsilon_1([\gamma_i])\epsilon_1([\gamma_j^{-1}])}{(1-\mu_0)^2}\left(\mu_0^{-1}\epsilon_2([\gamma_k^{-1}])\epsilon_2([\gamma_k])-\epsilon_2([g_{\ell}\gamma_m^{-1}])\epsilon_2([\gamma_mg_{\ell}^{-1}g_k^{-1}])\right)\\
&=\epsilon_E(\gamma_i\gamma_j^{-1})+\frac{\epsilon_1([\gamma_i])\epsilon_1([\gamma_j^{-1}])}{(1-\mu_0)^2}\left(\epsilon_2([g_k^{-1}])-\epsilon_2([e])-(\epsilon_2([g_k^{-1}])-\epsilon_2([g_{\ell}g_mg_{\ell}^{-1}g_k^{-1}]))\right)\\
&=\epsilon_E(\gamma_i\gamma_j^{-1}),
}
} 
the last equality since $\epsilon_2$ is well-defined. The case $i,j>r_1$ and $R\in\cl R_1$ is treated similarly. By Lemma \ref{LemAugFomE} we obtain a well-defined augmentation. 

Since the top-left $r_1\times r_1$ block of $E$ is the matrix $\cl E(\Gamma_1)$, we may choose $d_1$ from among the first $r_1$ columns of $E$ that are independent (the first being one of them). We may also choose $d_2-1$ columns from the last $r_2-1$ that (with the first column) also form an independent set. The union is an independent set by a standard argument. This shows that $K_1\# K_2$ has an augmentation of rank $d_1+d_2-1$.

Suppose that an augmentation $\epsilon:\cl C_{K_1\# K_2}\to\bb F$ with rank $d$ is given. Choose generators and elements of $\Gamma$ as above, ordered as above. Consider the KCH irrep $\rho:\pi_{K_1\# K_2}\to\text{GL}(W_{\rho}(\Gamma))$ corresponding to $\epsilon$ as constructed in Section \ref{SecKCHRepsandAugs}, where $W_{\rho}(\Gamma)$ is the vector space generated by the columns of $\cl E(\Gamma)$. By definition, $\rho(g)(\epsilon([\Gamma\gamma_j^{-1}])) = \epsilon([\Gamma g\gamma_j^{-1}])$ for any $g\in\pi_{K_1\# K_2}$. By Lemma \ref{LemEltExp} the restriction of $\rho$ to $\pi_{K_1}\subset\pi_{K_1\# K_2}$ is a \augrep on the subspace $W_1$ of $W_{\rho}(\Gamma)$ spanned by the first $r_1$ columns of $\cl E(\Gamma)$. Let $\epsilon_1$ be the induced augmentation. Similarly restriction of $\rho$ to $\pi_{K_2}$ gives a \augrep (inducing $\epsilon_2$ say), on the space $W_2$ spanned by the first column along with columns $r_1+1$ through $r_1+r_2-1$.

Consider the projection $\text{pr}(W_1)\subset\bb F^{r_1}$ of $W_1$ onto the first $r_1$ factors. By the definition of $E$, letting $E_j$ be column $j\in\{1,\ldots,r_1\}$ of $E$, any linear relation $\sum c_j\text{ pr}(E_j) = 0$ will also hold among the $E_j\in\bb F^{r_1+r_2-1}$. This makes it clear that $\text{pr}$ is an isomorphism between $W_1$ and the KCH irrep that induces $\epsilon_1$. In particular, $W_1$ is irreducible. An analogous statement holds for $W_2$. Thus $\dim W_n\le\text{ar}(K_n,\bb F)$ for $n=1,2$.

Since $W_{\rho}(\Gamma) = W_1+W_2$ and $W_1, W_2$ have a common 1-dimensional subspace, we see that $\dim W_{\rho}(\Gamma)\le\text{ar}(K_1,\bb F)+\text{ar}(K_2,\bb F)-1$. By Lemma \ref{DimWAndEpsDeg}, the rank of $\epsilon$ is at most $\text{ar}(K_1,\bb F)+\text{ar}(K_2,\bb F)-1$.
\end{proof}

\begin{cor}
If $\text{ar}(K_i,\bb F)=b(K_i)$ for $i=1,2$ (in which case $\text{mr}(K_i)=b(K_i)$), then $\text{ar}(K_1\# K_2,\bb F)=\text{mr}(K_1\# K_2)=b(K_1\# K_2) = b(K_1)+b(K_2)-1$.
\label{Brconnectsum}
\end{cor}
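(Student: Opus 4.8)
The plan is to assemble the corollary purely from the inequality chain (\ref{EqnIneqs}), the additivity statement of Theorem \ref{ThmConnSum}, and Schubert's classical theorem on the behavior of bridge number under connect sum. No new construction is required: the content is entirely in combining these three facts with two squeezing arguments.

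First I would dispatch the parenthetical claim. Under the hypothesis $\text{ar}(K_i,\bb F)=b(K_i)$, the inequalities $\text{ar}(K_i,\bb F)\le\text{mr}(K_i)\le b(K_i)$ supplied by (\ref{EqnIneqs}) have equal outer terms, so $\text{mr}(K_i)=b(K_i)$ for $i=1,2$ by squeezing. Next I would compute $\text{ar}(K_1\# K_2,\bb F)$ directly. By the additivity statement of Theorem \ref{ThmConnSum},
\[\text{ar}(K_1\# K_2,\bb F)=\text{ar}(K_1,\bb F)+\text{ar}(K_2,\bb F)-1=b(K_1)+b(K_2)-1,\]
where the last equality is the hypothesis. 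Separately, Schubert's theorem gives $b(K_1\# K_2)=b(K_1)+b(K_2)-1$, so that $\text{ar}(K_1\# K_2,\bb F)=b(K_1\# K_2)$.

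Finally I would apply (\ref{EqnIneqs}) once more, this time to the connect sum, obtaining $\text{ar}(K_1\# K_2,\bb F)\le\text{mr}(K_1\# K_2)\le b(K_1\# K_2)$. Since the two outer terms both equal $b(K_1)+b(K_2)-1$ by the previous step, the middle term $\text{mr}(K_1\# K_2)$ is forced to the same value, which closes the full chain of equalities $\text{ar}(K_1\# K_2,\bb F)=\text{mr}(K_1\# K_2)=b(K_1\# K_2)=b(K_1)+b(K_2)-1$.

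The only external ingredient — and hence the sole ``obstacle,'' though it is entirely classical — is Schubert's additivity $b(K_1\# K_2)=b(K_1)+b(K_2)-1$; every other step is a formal consequence of results already established. One point worth a remark is that the additivity statement of Theorem \ref{ThmConnSum} implicitly presupposes augmentations realizing the maximal ranks of $K_1$ and $K_2$ with a common value of $\mu$, which is what secures the lower bound $\text{ar}(K_1\# K_2,\bb F)\ge\text{ar}(K_1,\bb F)+\text{ar}(K_2,\bb F)-1$. Since I am invoking that theorem verbatim, I would simply cite it, noting that its proof already delivers both the needed lower and upper bounds.
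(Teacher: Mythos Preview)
Your proof is correct and follows essentially the same approach as the paper's: the paper's proof simply invokes the inequality chain $\text{ar}\le\text{mr}\le b$ (via Theorem~\ref{ThmDimBound} and the well-known bound), applies Theorem~\ref{ThmConnSum}, and cites the additivity of $b(K)-1$ under connect sum. Your version spells out the two squeezing steps more explicitly, but the ingredients and logic are the same.
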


\begin{proof} Theorem \ref{ThmDimBound} shows that $\text{ar}(K_i,\bb F)\le\text{mr}(K_i)$ and it is well-known that $\text{mr}(K_i)\le b(K_i)$. Applying Theorem \ref{ThmConnSum} and observing that $b(K)-1$ is additive we obtain the result.
\end{proof}

Some knots that are known to satisfy the hypothesis of Corollary \ref{Brconnectsum} include torus knots, two-bridge knots, a family of pretzel knots \cite{C12}, and others by results in Section \ref{SecHRkAugs}.

\section{Augmentation rank of braid closures}
\label{SecHRkAugs}
In this section we construct augmentations by considering $K$ as the closure of a braid $B\in B_n$. To do so we pass to the algebra $HC_0(K)\vert_{U=1}$, discussed in Section \ref{SecBG} above, which is isomorphic as an $R_0$-algebra to $\cl C_K$. We begin showing how to understand augmentations with rank $n$, proving Theorem \ref{ThmWrithe}. This allows us to find knots with augmentation rank smaller than meridional rank. Afterwards we indicate a method to construct, from a knot that has a rank $n$ augmentation, a new knot with a rank $n+1$ augmentation. As a consequence we prove Theorem \ref{ThmFlypes}.

\subsection{Augmentations with rank equal to braid index}
\label{SecHRkAugs1}

Recall the isomorphisms $F_{\cl P}:\cl P_K\to\cl C_K$ and $F_{HC}:\cl C_K\to HC_0(K)\vert_{U=1}$ of Theorems \ref{ThmPi1Cords} and \ref{ThmHC0}, respectively. In the next lemma ${\bf A}$ is the $n\times n$ matrix used to define $HC_0(K)\vert_{U=1}$. Recall that ${\bf A}_{ij}=a_{ij}$ if $i< j$, ${\bf A}_{ij}=-\mu a_{ij}$ if $i>j$, and the diagonal entries are $1-\mu$.

\begin{lem} Let $\epsilon:\cl P_K\to\bb F$ be an augmentation. The rank of $\cl E(\Gamma)$ equals the rank of $\epsilon({\bf A})$.
\label{LemRkep(A)}
\end{lem}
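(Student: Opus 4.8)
The plan is to choose the set $\Gamma$ adapted to the braid picture and then show that $\cl E(\Gamma)$ and $\epsilon({\bf A})$ differ only by multiplication on the left and on the right by invertible diagonal matrices; since such multiplications preserve rank, the equality follows at once. First I would take $\Gamma=\{\gamma_1,\dots,\gamma_n\}$ so that the meridians $g_i=\gamma_i^{-1}m\gamma_i$ are exactly the $n$ strand meridians meeting the disk $D$ of Figure \ref{FigA_nGens}. These generate $\pi_K$, so $r=n$ and $\cl E(\Gamma)$ is $n\times n$, the same size as ${\bf A}$. Moreover, by Lemma \ref{DimWAndEpsDeg} the rank of $\cl E(\Gamma)$ equals $\dim W(\Gamma)$, which is the intrinsic rank of $\epsilon$; hence it is independent of the meridional generating set, and it suffices to verify the claim for this convenient choice of $\Gamma$.

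The heart of the argument is a dictionary, on generators, between the two realizations $\cl P_K$ and $HC_0(K)\vert_{U=1}$ of the cord algebra. Recall $F_{HC}(c_{ij})={\bf A}_{ij}$, so under the corresponding augmentation on $\cl C_K$ one has $\epsilon({\bf A})_{ij}=\epsilon(c_{ij})$, while $\cl E(\Gamma)_{ij}=\epsilon([\gamma_i\gamma_j^{-1}])$, which by the definition of $F_{\cl P}$ is $\epsilon$ applied to the loop cord $\mu^{\text{lk}(\gamma_i\gamma_j^{-1},K)}(p\,\gamma_i\gamma_j^{-1}\,\overline p)$. I would identify this loop cord, in $\cl C_K$, with a central power of $\mu$ times the disk cord $c_{ij}$: the underlying arc joining strand $i$ to strand $j$ is common to both descriptions, and the discrepancy between the two framing/basepoint conventions is localized at the two endpoints. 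The contribution at the endpoint on strand $i$ depends only on $i$ and that on strand $j$ only on $j$, so the discrepancy is $\mu^{s_i+t_j}$ for integers $s_i,t_j$; concretely one expects $s_i+t_j=\text{lk}(\gamma_i,K)-\text{lk}(\gamma_j,K)$ by additivity of linking number. On the diagonal this is consistent, since $[\gamma_i\gamma_i^{-1}]=[e]=1-\mu=c_{ii}$ forces $s_i+t_i=0$. The sign and the extra factor of $\mu$ in ${\bf A}_{ij}$ for $i>j$ are part of $F_{HC}(c_{ij})$ itself, hence enter $\epsilon(c_{ij})$ and $\cl E(\Gamma)_{ij}$ identically and cause no mismatch.

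Granting the relation $[\gamma_i\gamma_j^{-1}]=\mu^{s_i+t_j}c_{ij}$ in $\cl C_K$ and applying $\epsilon$ with $\epsilon(\mu)=\mu_0$, I obtain $\cl E(\Gamma)_{ij}=\mu_0^{s_i}\,\epsilon({\bf A})_{ij}\,\mu_0^{t_j}$, that is
\[
\cl E(\Gamma)=\text{diag}(\mu_0^{s_1},\dots,\mu_0^{s_n})\cdot\epsilon({\bf A})\cdot\text{diag}(\mu_0^{t_1},\dots,\mu_0^{t_n}).
\]
Since $\mu_0=\epsilon(\mu)$ is a unit in $\bb F$, both diagonal matrices are invertible, and left/right multiplication by invertible matrices preserves rank, so $\text{rank}\,\cl E(\Gamma)=\text{rank}\,\epsilon({\bf A})$, as desired.

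The hard part will be making the dictionary of the second paragraph precise: tracking how $F_{\cl P}$ (which records a framing factor $\mu^{\text{lk}}$ when passing from based loops to cords) and $F_{HC}$ (which records the orientation sign and framing built into ${\bf A}$) interact on generators, and verifying that the entire remaining discrepancy between the loop cord of $\gamma_i\gamma_j^{-1}$ and the disk cord $c_{ij}$ is a power of $\mu$ whose exponent separates as a function of $i$ plus a function of $j$. Pinning down the endpoint contributions $s_i,t_j$ (equivalently, the linking numbers of the $\gamma_i$ with $K$) using the explicit geometry of $D$ and the path $p$ is where the real care is needed; everything else is linear algebra.
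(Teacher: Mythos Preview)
Your proposal is correct and follows essentially the same route as the paper. The paper carries out your ``hard part'' by explicitly constructing each $\gamma_i^{-1}$ to follow $g_i$ in $D$ and then run parallel to $K$ back to the basepoint; with $r_i=\operatorname{lk}(\gamma_i,K)$ one gets exactly $F_{HC}\circ F_{\cl P}([\gamma_i\gamma_j^{-1}])=\mu^{r_i-r_j}{\bf A}_{ij}$, i.e.\ your $s_i=r_i$, $t_j=-r_j$, so that $\cl E(\Gamma)=\epsilon(\Delta)\,\epsilon({\bf A})\,\epsilon(\Delta)^{-1}$ for $\Delta=\operatorname{diag}(\mu^{r_1},\dots,\mu^{r_n})$.
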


We admit to abusing notation, as $\epsilon$ is defined on $\cl P_K$ not $HC_0(K)\vert_{U=1}$. By $\epsilon({\bf A})$ we actually mean $\epsilon\circ(F_{HC}\circ F_{\cl P})^{-1}({\bf A})$.

\begin{proof} 
Let $K$ be the closure of $B\in B_n$. Taking a basepoint $x$ for $\pi_K$, consider the generating set $\cl G=\set{g_1,g_2,\ldots,g_n}$ where $g_i$ is the meridian of $K$ contained in $D$, as depicted on the right in Figure \ref{FigAugTransfer} ($D$ is the disk from the discussion in Section \ref{SecHC}). For each $i>1$ define a loop $\gamma_i^{-1}\in\pi_K$ that follows $g_i$ in $D$ from $x$ until $g_i$ leaves the upper half-disk. Leaving $D$, $\gamma_i^{-1}$ then runs parallel to $K$, framed as in Figure \ref{FigAugTransfer}, until returning to $x$. By convention $\gamma_1$ is the identity.

\begin{figure}[ht]
\begin{tikzpicture}[scale=0.75,>=stealth]
	\draw (10,1) circle (50pt);
	\draw (10.6,2.25) node {$D$};
	\foreach \p in {-1,0,1,2} \filldraw (9.6+0.8*\p,1) circle (1pt);
	\draw (8.81,1-.02) node {$\ast$}
		  (9.1,1.07) node[below]{{\footnotesize $x$}};
	\draw (8.8,1) circle (6pt);
	\draw[->] (8.6,1.04) -- (8.6,1.06);
	\draw (8.8,0.9) node[below] {{\footnotesize $g_1$}};
	\draw (8.575+1.6,1) arc (180:360:6pt);
	\filldraw (9,1) circle (1pt);
	\draw	(9,1) ..controls (9,2) and (10.6,2) ..(10.6,1);
	\draw[-<]	(9,1) ..controls (9,1.75) and (10.175,1.75) ..(10.175,1);
	\draw 	(10.4,0.9) node[below] {{\footnotesize $g_i$}};
		
	\clip (-.7,-0.5) rectangle (6.6,2.75);
	\draw(0,-.25)--(0,2.25)--(3.5,2.25)--(3.5,-.25)--cycle;
	\draw(1.75,1) node {{\Large $B$}};
	\draw (5.25,1.99) node {{\large $\ast$}}
		  (4.7,1.95) node[below right] {{\footnotesize $x$}};
	\draw	(-.65,0.67) node {{\footnotesize $i$}}
			(5.1,0.63) node[above] {{\footnotesize $g_i$}};
	\draw[red] (4.6,0.65) node[below] {{\footnotesize $\gamma_i^{\textrm{-}1}$}};
	\draw	(3.5,2) -- (4.75,2);
	\foreach \y in {2}
		\draw[draw=white,double=black,line width=0.5pt]
		 	(4.75,\y+0.15) ..controls (4.6,\y+0.15) and (4.6,\y-0.15) ..(4.75,\y-0.15);
	\draw	(4.75,2+0.15) ..controls (4.9,2+0.15) and (4.9,2-0.15) ..(4.75,2-0.15);
	\draw[<-]
			(4.65,0.67+0.15) ..controls (5,0.67+0.15) and (4.9,0.67-0.15) ..(4.75,0.67-0.15);
	\foreach \y in {0,.67,1.33,2}
		\draw(-.35,\y) -- (0,\y);
	\foreach \y in {0,.67,1.33}	
		\draw (3.5,\y) -- (4.75,\y);
	\draw[->]	(4.75,2) -- (5.7,2);
	\foreach \y in {0,1.33}
		\draw (4.75,\y) -- (5.7,\y);
	\draw[draw=white,double=black] (4.75,0.67) -- (5.7,0.67);
	\draw	(5.7,2) ..controls (7.7+2*1.3,2) and (7.7+2*1.3,-2-1) ..(3,-2-1)
			 		  ..controls (-3-2*1.3,-2-1) and (-2.7-2*1.3,2)..(-.7,2);
	\foreach \y in {0,.67,1.33}
		\draw (5.7,\y) ..controls (7.7+\y*1.3,\y) and (7.7+\y*1.3,-\y-1) ..(3,-\y-1)
			 		  ..controls (-3-\y*1.3,-\y-1) and (-2.7-\y*1.3,\y)..(-.7,\y);
	\draw[draw=white,double=black,line width=0.5pt]
			(4.75,1.85) ..controls (4,1.85) and (4,0.52) ..(4.75,0.52);
	\draw	(5.25,1.99) node {{\large $\ast$}};
	\foreach \y in {0.52,1.18}
			\draw[red]	(5.7,\y) ..controls (7.7+\y*1.3,\y) and (7.7+\y*1.3,-\y-1) ..(3,-\y-1)
			 		  ..controls (-3-\y*1.3,-\y-1) and (-2.7-\y*1.3,\y)..(-.7,\y);	
	\draw[red,line width=0.65pt]
			(4.75,0.67-0.15) -- (5.7,0.67-0.15);
	\draw[red, line width=0.65pt]	(-.35,0.67-0.15) -- (0,0.67-.15)
				(3.5,1.33-.15)--(5.7,1.33-.15)
				(-.35,1.33-.15) -- (0,1.33-.15)
				(3.5,1.85) -- (4.6,1.85);
	\foreach \y in {0.52,1.18} \draw[red,line width=0.65pt,->]	(5.3,\y) -- (5.7,\y);
	\draw[draw=white,double=red]
			(4.75,1.85) ..controls (4,1.85) and (4,0.52) ..(4.75,0.52);
	\draw[draw=white,double=black,line width=0.5pt]	(4.75,1.85) ..controls (4.2,1.77) and (4.05,0.82) ..(4.65,0.82);
	\filldraw (4.75,1.85) circle (1.5pt);
\end{tikzpicture}
\caption{Computing $(F_{HC}\circ F_{\cl P})^{-1}(a_{ij})$}
\label{FigAugTransfer}
\end{figure}

Define $\Gamma=\{\gamma_1,\ldots,\gamma_r\}$. By construction, $g_i$ and $\gamma_i^{-1}g_1\gamma_i$ are equal in $\pi_K$. Define $r_i$ to be the linking number $\text{lk}(\gamma_i,K)$. After homotopy of cords we see that $F_{\cl P}([\gamma_i\gamma_j^{-1}])=\mu^{r_i-r_j}c_{ij}$ and so $F_{HC}\circ F_{\cl P}([\gamma_i\gamma_j^{-1}])=\mu^{r_i-r_j}{\bf A}_{ij}$.

If $\Delta$ is the diagonal matrix with $\Delta_{ii}=\mu^{r_i}$ the discussion above implies $\cl E(\Gamma)\epsilon(\Delta)=\epsilon(\Delta)\epsilon({\bf A})$. As $\epsilon(\mu)\ne0$, $\epsilon({\bf A})$ and $\cl E(\Gamma)$ must have the same rank.
\end{proof}

\vspace*{6pt}
\noindent{\bf Theorem \ref{ThmWrithe}.}\ \ 
{\it Suppose that $K$ is the closure of $B\in B_n$, and that $\epsilon:\cl C_K\to\bb C$ is an augmentation of $K$ with rank $n$ and $\epsilon(\mu)=\mu_0$. Then $\epsilon(\lambda)=(-\mu_0)^{-w(B)}$, where $w(B)$ is the writhe (or algebraic length) of $B$. Furthermore, there is a curve of rank $n$ augmentations in the closure of $V_K$ that corresponds to a factor $\lambda\mu^{w(B)}-(-1)^{w(B)}$ of $\aug_K(\lambda,\mu)$.
}
\vspace*{6pt}

\begin{proof} The proof below uses an argument with determinants for which we work in the commutative algebra $\cl A_n^c$, defined as $\cl A_n$ modulo the ideal generated by $\setn{xy-yx}{x,y\in\cl A_n}$. Write $\cl I_B^c$ also for the ideal in $\cl A_n^c$ generated by the quotients of the elements in $\cl I_B$, the ideal of Definition \ref{DefHC_0}. For any augmentation $\epsilon:\cl C_K\to\bb C$, $\epsilon\circ F_{HC}^{-1}$ factors through $(\cl A_n^c\otimes R_0)/\cl I_B^c$. Without altering notation we suppose during the proof that, for example, the entries of a matrix which are traditionally in $\cl A_n$, are instead the corresponding class in $\cl A_n^c$.

Let $\Lambda$ be the diagonal matrix $\text{diag}[\lambda\mu^{w(B)},1,\ldots,1]$ used to define $HC_0(K)\vert_{U=1}$ (see Section \ref{SecHC}). That $\epsilon$ is well-defined implies
\[\epsilon({\bf A}) - \epsilon({\bf A})\epsilon(\Phi_B^R)\epsilon(\Lambda^{-1}) = 0.\]
By Lemma \ref{LemRkep(A)} the $n\times n$ matrix $\epsilon({\bf A})$ is invertible, and so $\epsilon(\Phi_B^R)=\epsilon(\Lambda)$. (Note also that $\epsilon(\Phi_B^L)=\epsilon(\Lambda)^{-1}$.)

By Corollary \ref{CorPhiInv}, $\Phi_B^R$ is invertible in the ring of matrices over $\ints[\{a_{ij}\}]$, its inverse being $\phi_B(\Phi_{B^{-1}}^R)$. So $\det\Phi_B^R$ is a unit in $\ints[\{a_{ij}\}]$ and can only be $\pm1$.

As $\epsilon(\Phi_B^R)=\epsilon(\Lambda)$ and $\det(\epsilon(\Phi_B^R))= \epsilon(\det(\Phi_B^R))=\pm1$, we see that $\epsilon(\lambda\mu^{w(B)})=\pm1$. To determine the sign, let $\sg_k$ be a standard generator of $B_n$. Use the definition of $\phi_{\sg_k}$ to check $\det(\Phi_{\sg_k^{\pm1}}^R)=-1$. Since $\phi_B$ is an algebra map we see that $\det(\phi_B(\Phi_{\sg_k^{\pm1}}^R))=-1$ for any $B\in B_n$. 

That $\det(\epsilon(\Phi_B^R))=(-1)^{w(B)}$ now follows from the Chain Rule (Theorem \ref{ThmChainRule}) and the fact that $w(B)$ has the same parity as the length of $B$ as a word in the generators $\{\sg_1,\ldots, \sg_{n-1}\}$ of $B_n$. It follows that $\epsilon(\lambda)=(-\mu_0)^{-w(B)}$.

Let $\Delta(B)$ be the $n\times n$ diagonal matrix $\text{diag}[(-1)^{w(B)},1,\ldots,1]$. We see that $\epsilon$ satisfies $\epsilon(\Phi_B^L)=\Delta(B)=\epsilon(\Phi_B^R)$. For any $\mu_1\in\bb C^*$, define a map $\epsilon_1$ on $\cl A_n\otimes R_0$ by setting $\epsilon_1(\mu)=\mu_1$, $\epsilon_1(\lambda)=(-\mu_1)^{-w(B)}$, and $\epsilon_1(a_{ij})=\epsilon(a_{ij})$ for $1\le i\ne j\le n$. Then $\epsilon_1(\Phi_B^L)=\Delta(B)=\epsilon_1(\Phi_B^R)$ and $\epsilon_1(\Lambda)=\Delta(B)$, and so $\epsilon_1$ defines an augmentation as
	\[\epsilon_1({\bf A})-\epsilon_1(\Lambda)\epsilon_1(\Phi_B^L)\epsilon_1({\bf A})=0=\epsilon_1({\bf A})- \epsilon_1({\bf A})\epsilon_1(\Phi_B^R)\epsilon_1({\bf A}).\]
Since $\det(\epsilon_1({\bf A}))$ is a polynomial in $\bb C[\mu_1]$, and non-zero when $\mu_1=\mu_0$, it can be zero for only finitely many $\mu_1$. Thus we have a rank $n$ augmentation for all but finitely many $\mu_1\in\bb C^*$ and the algebraic closure of $V_K$ contains a 1-dimensional component of rank $n$ augmentations which is the zero locus of $\lambda\mu^{w(B)}-(-1)^{w(B)}$.
\end{proof}

\begin{rem} The conventions adopted in \cite{Ng11HT} for infinity transverse homology would make $\epsilon(\lambda)=\mu_0^{w(B)}$, since the translation to our conventions is $\mu\leftrightarrow-\mu^{-1}$.
\label{RemConvs}
\end{rem}

\subsection{Rank gap}
\label{SecRankGap}

Let $n$ be the braid index of $K$ and write $\text{ar}(K)$ for $\text{ar}(K,\bb C)$. From (\ref{EqnIneqs}) the augmentation rank $\text{ar}(K)$ cannot be $n$ if $b(K)<n$, where $b(K)$ is the bridge number. Recall that the meridional rank $\text{mr}(K)$ is the minimal number of meridians that generate $\pi_K$. Ideas from the proof of Theorem \ref{ThmWrithe} allow us to find knots with a gap between their augmentation rank and meridional rank. That is, $\text{mr}(K)=b(K)=n$, but $\text{ar}(K)<n$. 

\begin{thm} Let $K$ be a knot with crossing number at most 10, which has bridge number and minimal braid index equal to three. If $K$ (or its mirror) is one of $8_{16},8_{17},10_{91},10_{94}$ then $2=\text{ar}(K) < \text{mr}(K) = 3$. Otherwise, $\text{ar}(K) = \text{mr}(K) = 3$.
\label{Thm3bridge3braids}
\end{thm}
\begin{proof}
For $B\in B_n$ recall the matrix $\Delta(B)$ defined in the proof of Theorem \ref{ThmWrithe}. If $K$ is the closure of $B$ and $\text{ar}(K)=n$, there is a map $\epsilon:\cl A_n\to\bb C$ with $\epsilon(\Phi_B^L)=\Delta(B)$. As the augmentation rank is an invariant of $K$, the choice of $B$ (provided it has index $n$) does not affect the existence of such a map. 

Define $\cl A_n^{ab}$ to be $\cl A_n^c$ modulo the ideal generated by $\setn{a_{ij}-a_{ji}}{1\le i\ne j\le n}$. Given $B\in B_n$, if $\epsilon:\cl A_n\to\bb C$ exists with $\epsilon(\Phi_B^L)=\Delta(B)$, and further $\epsilon$ descends to $\cl A_n^{ab}$, then $\epsilon$ defines an augmentation by Proposition \ref{Prop:Transpose} and Theorem \ref{ThmPhiSqRt}. Furthermore, for any extension of $\epsilon$ to $\cl A_n\otimes R_0$, a calculation of $\det(\epsilon({\bf A}))$ shows that its degree in $\epsilon(\mu)$ is $n>0$. So such an $\epsilon$ determines a family of rank $n$ augmentations for all but finitely many choices for $\epsilon(\mu)$.

Consulting the database at KnotInfo \cite{knot-info}, there are 42 prime knots (up to mirroring) which have bridge number and braid index three and have crossing number at most 10. For each we take a braid representative $B\in B_3$ (e.g.\ take the representative provided at \cite{knot-info}). Let $J_B$ be the ideal in $\cl A_3^c$ generated by the polynomials appearing as entries in $\Phi_B^L - \Delta(B)$ (there is no issue working in $\cl A_3^c$ rather than $\cl A_3$ as $\epsilon$ must factor through $\cl A_3^c$ in any case). We calculate $\Phi_B^L - \Delta(B)$ and compute a reduced Gr\"obner basis for $J_B$.\footnote{The reader is referred to the Mathematica notebook 3bridge3braids.nb, available at \href{http://tigerweb.towson.edu/ccornwell/docs/aux_files.html}{the author's website}. Our computation of $\Phi_B^L - \Delta(B)$ uses the Mathematica package transverse.m written by Lenny Ng and found at \href{http://www.math.duke.edu/~ng/math/programs.html}{his website}. Then we use the Mathematica function GroebnerBasis. Note that the transverse.m was written for transverse homology, so one would consider entries in $\Phi_B^L-\text{Id}$ by Remark \ref{RemConvs}.} 

For the braid representatives of $8_{16},8_{17},10_{91},10_{94}$, the Gr\"obner basis of $J_B$ is $\{1\}$, so the polynomials have no common zero in $\bb C$ and there does not exist a map $\epsilon:\cl A_n\to\bb C$ with $\epsilon(\Phi_B^L)=\Delta(B)$. For each of the other braid representatives one can see that the computed basis of polynomials has a common zero on which $a_{ij}-a_{ji}$, $1\le i<j\le 3$, evaluates to zero. By the above discussion we get a rank 3 augmentation.

It was shown in \cite{DGa04} and \cite{BZ05} that every non-trivial knot has a non-abelian SL$_2\bb C$ representation, and so $\text{ar}(8_{16})=\text{ar}(8_{17})=\text{ar}(10_{91})=\text{ar}(10_{94})=2$. As shown in \cite{BZ89}, a consequence of Thurston's orbifold geometrization and the fact that only 2-bridge knots have double branched cover a lens space (\cite{HR85}) is that 3-bridge knots have meridional rank 3.

For the mirror $m(K)$ of any knot $K$ that has already been checked, \cite[Prop.\ 4.2]{Ng08} implies that the cord algebra of $m(K)$ is isomorphic to the algebra obtained from $HC_0(K)\vert_{U=1}$ by transforming $\mu\leftrightarrow\mu^{-1}$. Thus $\text{ar}(m(K)) = 3$ if and only if $\text{ar}(K) = 3$. Finally, if $K$ is not prime then since $b(K)=3$ and $b(K)-1$ is additive under connect sum, $K$ is the connect sum of two 2-bridge knots, each of which has augmentation rank 2. Then $\text{ar}(K) = 3$ by Theorem \ref{ThmConnSum}.
\end{proof}

\subsection{Increasing augmentation rank with the braid index}
\label{SecHRkAugs2}

Let $K$ be a knot with braid index $n$ and a rank $n$ augmentation $\cl A_n\to\bb C$ that descends to $\cl A_n^{ab}$. We describe a method for constructing knots with higher braid index that have a rank $n+1$ augmentation. Let $\sg_{i,j}$, for $i<j$, denote the braid $\sg_i\ldots\sg_{j-2}\sg_{j-1}\sg_{j-2}^{-1}\ldots\sg_i^{-1}$ (visually, this braid crosses the $i$ and $j$ strands above the intermediate strands). We will prove the following.

\begin{thm} Let $u,v$ be integers with $\abs u\ge 2$, $\abs v\ge 3$, and let $\delta=\pm1$. If the closure of $B\in B_n$ has a rank $n$ augmentation that descends to $\cl A_n^{ab}$ then, for $1\le i< n$, the closure of $B\sg_n^{\delta}\sg_{i,n}^u\sg_n^v\in B_{n+1}$ has a rank $n+1$ augmentation with the same property, provided $u+v-1$ is even or $i=1$ and $u$ is odd.
\label{ThmBuildingRknAugs}
\end{thm}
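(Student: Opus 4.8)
The plan is to build the augmentation on $B' = B\sg_n^{\delta}\sg_{i,n}^u\sg_n^v \in B_{n+1}$ explicitly by extending the rank $n$ augmentation $\epsilon$ that we already have on the closure of $B$. By the analysis in the proof of Theorem \ref{ThmWrithe}, a rank $n+1$ augmentation that descends to $\cl A_{n+1}^{ab}$ amounts to finding a map $e':\cl A_{n+1}\to\bb C$ with $e'(\Phi_{B'}^L)=\Delta(B')$ (equivalently $e'(\Phi_{B'}^R)=\Delta(B')$ once $e'$ descends to the abelianization, by Proposition \ref{Prop:Transpose} and Theorem \ref{ThmPhiSqRt}), together with the guarantee that $\det(e'(\mathbf A))$ is not identically zero so that rank $n+1$ is actually achieved. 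So the heart of the argument is a matrix computation: compute $\Phi_{B'}^L$ using the Chain Rule (Theorem \ref{ThmChainRule}), namely $\Phi_{B'}^L = \phi_B\bigl(\Phi_{\sg_n^\delta\sg_{i,n}^u\sg_n^v}^L\bigr)\cdot\Phi_B^L$, and then show that the values $e'(a_{ij})$ coming from the hypothesized rank $n$ augmentation on $B$ can be extended to the new generators $a_{i,n+1},a_{n+1,i}$ so that the defining equation holds.

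First I would set up the extension. The existing augmentation on $\cl A_n^{ab}$ supplies values $e(a_{ij})$, $1\le i\ne j\le n$, satisfying $e(\Phi_B^L)=\Delta(B)$. I would include $\cl A_n^{ab}\hookrightarrow\cl A_{n+1}^{ab}$ and posit values for the new entries $e'(a_{j,n+1})$ and $e'(a_{n+1,j})$, leaving them as unknowns to be solved for. Because $\sg_n$, $\sg_{i,n}$ only involve strands $i,\ldots,n+1$ and interact with the $(n+1)$st strand, the factors $\Phi_{\sg_n^\delta}^L$, $\Phi_{\sg_{i,n}^u}^L$, $\Phi_{\sg_n^v}^L$ act (after applying $\phi_B$) as explicit matrices whose nontrivial block lives in the coordinates touched by these crossings; I would compute these blocks directly from the formula for $\phi_{\sg_k}$ given in Section \ref{SecHC}, using the algebra-of-paths picture (Figure \ref{FigRelnPathAlg}) to justify the entries for the band generator $\sg_{i,n}$. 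The equation $e'(\Phi_{B'}^L)=\Delta(B')$ then becomes a finite system in the new unknowns, and I would solve it, tracking how the parity conditions on $u$, $v$ (and the case split $i=1$ versus $i\ge 2$) enter through the signs produced by the $\phi_{\sg_k}$ action on the $a_{k,k+1}$-type generators.

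The parity hypotheses ``$u+v-1$ even or ($i=1$ and $u$ odd)'' are exactly what I expect to fall out of a solvability/consistency condition in this linear-algebra step, so I would organize the computation to isolate where those constraints are forced. After producing $e'$, I would verify rank $n+1$ by checking that $\det(e'(\mathbf A))$, viewed as a polynomial in $e'(\mu)$, is not identically zero—most cleanly by exhibiting one value of $\mu$ where it is nonzero, for instance by degree-counting in $e'(\mu)$ as in the final paragraph of the proof of Theorem \ref{ThmWrithe}. Finally I would confirm that $e'$ descends to $\cl A_{n+1}^{ab}$, so the constructed augmentation inherits the symmetry needed for the iteration (and for deducing Theorem \ref{ThmFlypes} by specializing $B$, $i$, and the exponents).

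The main obstacle will be the explicit computation of $\phi_B\bigl(\Phi_{\sg_n^\delta\sg_{i,n}^u\sg_n^v}^L\bigr)$ and checking consistency of the resulting system: the band generator $\sg_{i,n}$ is a conjugated crossing whose $\Phi^L$-matrix, after raising to the power $u$ and conjugating by $\phi_B$, is not immediate, and the bookkeeping of signs under $a_{k,k+1}\mapsto -a_{k+1,k}$ is precisely what produces the delicate parity conditions. The key is to reduce this to the smallest possible block and to verify that the hypothesis ``$B$ descends to $\cl A_n^{ab}$'' is genuinely used to close the system (it ensures $e(\Phi_B^R)=\Delta(B)$ as well, which I expect is needed when the new strand interacts with the old braid through $\phi_B$).
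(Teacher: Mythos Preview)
Your outline is correct and follows essentially the same route as the paper: use the Chain Rule to factor $\Phi^L$ for the extended braid, feed in the known values $e(\Phi_B^L)=\Delta(B)$ from the rank $n$ augmentation, and solve for the new generators so that $e'(\Phi_{B'}^L)=\Delta(B')$. Two points where the paper's execution is more structured than your sketch anticipates are worth knowing. First, the computation of $\Phi_{\sg_{i,n}^u}^L$ is not a one-off block calculation but is governed by a recurrence: after setting $x=a_{in}=a_{ni}$ one finds $\phi_{\sg_{i,n}^k}(a_{i,n+1})=P_k(x)a_{i,n+1}+P_{k-1}(-x)a_{n,n+1}$ with $P_{k+1}(x)=P_{k-1}(x)-xP_k(-x)$, and it is the parity of these $P_k$ (odd or even as $k$ is) together with the determinant constraint $\det\Phi^L=\pm1$ that forces exactly the stated condition ``$u+v-1$ even or $i=1$, $u$ odd.'' Second, the paper does only the case $\delta=-1$ by direct computation and then dispatches $\delta=+1$ by the mirror involution $\mu\leftrightarrow\mu^{-1}$ on the cord algebra, rather than redoing the matrix analysis; this saves a parallel case split. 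Apart from these two refinements, your plan and the paper's proof coincide.
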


\begin{figure}[ht]
\begin{tikzpicture}[scale=0.65,>=stealth]
	\foreach \c in {0,5,10} \draw (\c,1) circle (50pt);
	\foreach \c in {0,5,10}
		\foreach \p in {-1,1} \filldraw (\c+\p,1) circle (1.5pt);
	\foreach \c in {0,5,10}	
		\draw 	(\c+1.75,1) node {{\large $\ast$}}
				(\c+0.6,2.25) node {$D$}
				(\c-1,1) node[below] {{\small $i$}}
				(\c+1,1) node[below] {{\small $j$}};
	\foreach \c in {0,10}
		\draw	(\c,1) node {$\ldots$};
	\filldraw	(5,1) circle (1.5pt);
	\draw	(5,1) node[below] {{\small $k$}};
	\draw[->,thick] (9,1.1) ..controls (9,1.4) and (10.7,2.2) .. (11.685,1.01);
	\draw (10,-0.75) node[below] {{\small $\sg_{i,j}\cdot c_{j,n+1}$}};
	\draw[->,thick] (1,1.125) ..controls (0.75,1.625) and (-0.25,1.4) ..(-0.75,0.9)
							  ..controls (-1.1,0.69) and (-1.45,0.99) ..(-1.1,1.2)
							  ..controls (-0.75,1.41) and (0.7,2.2) ..(1.685,1.01);
	\draw (0,-0.75) node[below] {{\small $\sg_{i,j}\cdot c_{i,n+1}$}};	
	\draw[->,thick] (5,1.1) ..controls (5.2,1.3) and (5.25,1.4) ..(5.75,0.9)
							..controls (6.1,0.69) and (6.45,0.99) ..(6.1,1.2)
							..controls (5.2,1.75) and (4.95,1.32)	..(5-0.75,0.9)
							..controls (5-1.1,0.69) and (5-1.45,0.99) ..(5-1.1,1.2)
							..controls (5-0.4,1.62) and (5.7,2.2) ..(6.685,1.01);
	\draw (5,-0.75) node[below] {{\small $\sg_{i,j}\cdot c_{k,n+1}$}};
\end{tikzpicture}
\caption{Computing $\Phi_{B'}^L$}
\label{FigPhiLCalcA}
\end{figure}

For the proof of Theorem \ref{ThmBuildingRknAugs} we first require a pair of lemmas.

\begin{lem} Let $B'=\sg_{i,j}^u\in B_n$ for some $1\le i< j\le n$ and $\abs u\ge 2$ and let $X=\{a_{ij},a_{ji}\}$. If $u$ is odd or $u+v-1$ is even then there exists a map $e:\cl A^{ab}_n\to\cl A^{ab}_n\otimes\bb C$ such that $e(a_{kl})=a_{kl}$ for $a_{kl}\not\in X$, and such that $e(\Phi_{B'}^L)$ is the $n\times n$ diagonal matrix $\text{diag}[\text{Id}_{i-1},(-1)^{u+v-1},\text{Id}_{j-i-1},(-1)^{v-1},\text{Id}_{n-j}]$.
\label{LemBasePhis}
\end{lem}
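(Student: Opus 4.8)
The plan is to understand the braid $B' = \sg_{i,j}^u$ concretely through its action on cords in the punctured disk $D$, using the characterization of $\phi_B$ via the algebra of paths from Section \ref{SecHC}. Recall that $\Phi_{B'}^L$ is defined by $\phi_{B'}^*(a_{k,n+1}) = \sum_l (\Phi_{B'}^L)_{kl}\, a_{l,n+1}$, so computing $e(\Phi_{B'}^L)$ amounts to understanding how $\sg_{i,j}^u$ moves the cords $c_{k,n+1}$ (equivalently, tracking the path $\sg_{i,j}\cdot c_{k,n+1}$ as illustrated in Figure \ref{FigPhiLCalcA}). First I would isolate the three cases for the initial endpoint index $k$: when $k \notin \{i,j\}$ the strand $k$ is a spectator, so $\sg_{i,j}$ fixes $c_{k,n+1}$ up to paths through the $i,j$ punctures, which should contribute only to the diagonal; the substantive computation is for $k=i$ and $k=j$, where the band generator $\sg_{i,j}$ genuinely transposes the two strands while passing over the intermediate ones.

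The key structural observation is that $\sg_{i,j}$ crosses the $i$ and $j$ strands \emph{above} the intermediate strands, so as a mapping class it behaves essentially like the single band swapping punctures $i$ and $j$ directly; thus I expect $\Phi_{\sg_{i,j}}^L$ to take a form very close to the $2\times 2$ block $\begin{pmatrix} -a_{ji} & 1 \\ 1 & 0\end{pmatrix}$ (the $\sg_k$-type action on the $k,k+1$ pair) transplanted to the $i,j$ positions, with identity on all other diagonal entries. The natural strategy is then to compute $\Phi_{\sg_{i,j}}^L$ once, pass to the abelianization $\cl A_n^{ab}$ where $a_{ij}=a_{ji}$, and raise the resulting matrix to the $u$-th power using the Chain Rule (Theorem \ref{ThmChainRule}): $\Phi_{B'}^L = \Phi_{\sg_{i,j}^u}^L = \phi_{\sg_{i,j}^{u-1}}(\Phi_{\sg_{i,j}}^L)\cdots$, iterated. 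The cleanest route is to set the variable $a_{ij}=a_{ji}$ to the specific value forced by $e$ (namely whatever makes the off-diagonal entries vanish after the $u$-th power) — this is exactly why the map $e$ sends only the elements of $X=\{a_{ij},a_{ji}\}$ to scalars in $\bb C$ while fixing everything else.

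The computational heart is diagonalizing or explicitly powering the relevant $2\times 2$ block over $\cl A_n^{ab}\otimes\bb C$. Writing the block as $\begin{pmatrix} -t & 1 \\ 1 & 0 \end{pmatrix}$ with $t=e(a_{ij})$, its $u$-th power has entries that are (up to sign) Chebyshev-like polynomials in $t$, and the diagonal entries of $e(\Phi_{B'}^L)$ should come out as $(-1)^{u+v-1}$ in position $i$ and $(-1)^{v-1}$ in position $j$. I would solve for $t$ so that the \emph{off-diagonal} entries of the $u$-th power vanish and the diagonal entries take exactly the required signs; the parity hypotheses ``$u$ is odd or $u+v-1$ is even'' are precisely the conditions under which a consistent scalar $t\in\bb C$ exists (these parity constraints govern which signs the two diagonal eigenvalue-type entries can simultaneously realize). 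The main obstacle I anticipate is twofold: first, correctly reading off $\Phi_{\sg_{i,j}}^L$ from the geometry — keeping track of the signs introduced by the involution and the over-crossing convention, so that the $2\times 2$ block is the right one; and second, verifying that the scalar solution $t$ exists in exactly the stated parity cases and produces precisely the signs $(-1)^{u+v-1}$ and $(-1)^{v-1}$. The passage to $\cl A_n^{ab}$ is essential here, since only after imposing $a_{ij}=a_{ji}$ does the block become a genuine $2\times 2$ matrix over a commutative ring whose powers can be computed and solved.
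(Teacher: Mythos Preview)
Your overall strategy is right in spirit---reduce to a $2\times 2$ block in positions $i,j$ and solve for the specialization of $a_{ij}=a_{ji}$---but there is a genuine gap in the step where you ``explicitly power the relevant $2\times 2$ block.'' The Chain Rule you invoke is not a naive matrix power: one has $\Phi_{\sg_{i,j}^u}^L=\phi_{\sg_{i,j}}(\Phi_{\sg_{i,j}^{u-1}}^L)\cdot\Phi_{\sg_{i,j}}^L$, and the crucial fact you have not used is that $\phi_{\sg_{i,j}}(a_{ij})=-a_{ji}$ and $\phi_{\sg_{i,j}}(a_{ji})=-a_{ij}$, so in $\cl A_n^{ab}$ with $x=a_{ij}=a_{ji}$ one has $\phi_{\sg_{i,j}}(x)=-x$. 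Thus the $2\times 2$ block of $\Phi_{B'}^L$ is \emph{not} $M(t)^u$ for $M(t)=\begin{pmatrix}-t&1\\1&0\end{pmatrix}$; it is the alternating product $M((-1)^{u-1}x)\cdots M(-x)M(x)$. This matters decisively: if you set $x=t\in\bb C$ and take the literal $u$-th power, then $M(t)^u$ is symmetric, and forcing its off-diagonal entry to vanish forces $M(t)^u$ to be a scalar matrix, so its two diagonal entries must agree. But when $u$ is odd the target block is $\text{diag}[(-1)^{u+v-1},(-1)^{v-1}]$ with \emph{opposite} signs on the diagonal---so your method cannot reach it. What rescues the argument is precisely the sign flip: the paper's polynomials $P_k$ satisfy $P_{k+1}(x)=P_{k-1}(x)-xP_k(-x)$, and the fact that $P_k$ has the parity of $k$ is what allows the diagonal entries $P_u(x_0)$ and $P_{u-2}(-x_0)$ to differ by a sign when $u$ is odd (and also lets one adjust that sign by replacing $x_0$ with $-x_0$).

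There is a second, smaller gap. For intermediate $i<k<j$ the cord $c_{k,n+1}$ is \emph{not} fixed by $\sg_{i,j}$: it picks up loops around both the $i$- and $j$-punctures (see the center of Figure~\ref{FigPhiLCalcA}), so row $k$ of $\Phi_{B'}^L$ has nonzero entries in columns $i$ and $j$ as well as on the diagonal---explicitly $\phi_{B'}(a_{ki})P_u(x)-a_{ki}$ in column $i$ and $\phi_{B'}(a_{ki})P_{u-1}(-x)$ in column $j$. The column-$j$ entry dies once $P_{u-1}(x_0)=0$, but killing the column-$i$ entry requires showing $e(\phi_{B'}(a_{ki}))=(-1)^{u+v-1}a_{ki}$, which the paper obtains by a separate computation using the identity $\phi_{B'}({\bf A})=\Phi_{B'}^L\cdot{\bf A}\cdot\Phi_{B'}^R$ (Theorem~\ref{ThmPhiSqRt}). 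Your proposal does not address this step.
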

\begin{proof}
We give a proof for the case $u\ge 2$. Writing a verbatim copy of this proof, but interchanging the roles of $i$ and $j$ throughout (with the one exception that $\sg_{i,j}$ is simply replaced with $\sg_{i,j}^{-1}$) one obtains a proof for the case $u\le-2$. 

Include $B_n\subset B_{n+1}$ and consider $\phi_B\in\text{Aut}(\cl A_{n+1})$. Placing the $n+1$ puncture on $D$ at the boundary we note that the class of $\sg_{i,j}\cdot c_{i,n+1}$ is represented by the leftmost arc in Figure \ref{FigPhiLCalcA}. Using relations from Figure \ref{FigRelnPathAlg}, we compute $\phi_{\sg_{i,j}}(a_{i,n+1}) = a_{j,n+1}-a_{ji}a_{i,n+1}$. The arc on the right of Figure \ref{FigPhiLCalcA} is $\sg_{i,j}\cdot c_{j,n+1}$ and so $\phi_{\sg_{i,j}}(a_{j,n+1})=a_{i,n+1}$. We also find $\phi_{\sg_{i,j}}(a_{ij})=-a_{ji}$ and $\phi_{\sg_{i,j}}(a_{ji})=-a_{ij}$. 

First, $\phi_{\sg_{i,j}}$ descends to $\cl A_n$ modulo the ideal generated by $a_{ij}-a_{ji}$. Write $x=a_{ij}$, then $\phi_{\sg_{i,j}}(x)=-x$. In addition, for any $k\ge 1$, we see $\phi_{\sg_{i,j}^k}(a_{i,n+1})=P_k(x)a_{i,n+1} + Q_k(x)a_{j,n+1}$ for some polynomials $P_k,Q_k$ in $\ints[x]$. Using a computation above, we have $P_0(x) = 1$, $Q_0(x) = 0$, $P_1(x) = -x$, and $Q_1(x) = 1$. Now we compute
	\al{
	\phi_{\sg_{i,j}^{k+1}}(a_{i,n+1}) &= \phi_{\sg_{i,j}}(P_k(x)a_{i,n+1}+Q_k(x)a_{j,n+1})\\
		&= P_k(-x)(a_{j,n+1}-xa_{i,n+1})+Q_k(-x)a_{i,n+1},
	}
so $Q_{k+1}(x) = P_k(-x)$, and also
	\begin{equation}
	P_{k+1}(x)=P_{k-1}(x)-xP_k(-x).
	\label{EqnRecurrP_k}
	\end{equation}
The recurrence in (\ref{EqnRecurrP_k}) with initial data $P_0(x)=1$ and $P_1(x)=-x$ then determines $P_k(x)$ (and $Q_k(x) = P_{k-1}(-x)$) for all $k\ge1$. Note that $P_k$ is an odd or even function, when $k$ is odd or even respectively, that the degree of $P_k$ is $k$, and that $\phi_{\sg_{i,j}^k}(a_{j,n+1}) = \phi_{\sg_{i,j}^{k-1}}(a_{i,n+1}) = P_{k-1}(x)a_{i,n+1}+P_{k-2}(-x)a_{j,n+1}$.

By our computations rows $i$ and $j$ of $\Phi_{B'}^L$ consist of only four non-zero entries (in columns $i$ and $j$), namely: $P_u(x), P_{u-1}(-x)$ in row $i$; $P_{u-1}(x)$, $P_{u-2}(-x)$ in row $j$. For $k<i$ or $k>j$ we have that $\phi_{B'}(a_{k,n+1})=a_{k,n+1}$. To understand the intermediate rows, let $i<k<j$ and check that the center of Figure \ref{FigPhiLCalcA} gives $\sg_{i,j}\cdot c_{k,n+1}$. Then
\al{
\phi_{\sg_{i,j}}(a_{k,n+1}) 
							&= (a_{k,n+1}-a_{ki}a_{i,n+1})-a_{kj}\phi_{\sg_{i,j}}(a_{i,n+1})\\
							&= (a_{k,n+1}-a_{ki}a_{i,n+1})+\phi_{\sg_{i,j}}(a_{ki}a_{i,n+1}),
}
since $\phi_{\sg_{i,j}}(a_{ki})=-a_{kj}$. But then $\phi_{\sg_{i,j}}(a_{k,n+1}-a_{ki}a_{i,n+1}) = a_{k,n+1}-a_{ki}a_{i,n+1}$, implying $\phi_{\sg_{i,j}^u}(a_{k,n+1}) = (a_{k,n+1}-a_{ki}a_{i,n+1}) + \phi_{\sg_{i,j}^u}(a_{ki}a_{i,n+1})$. And so the $k^{th}$ row of $\Phi_{B'}^L$ consists of 
	\[\phi_{B'}(a_{ki})P_u(x)-a_{ki},\ 1,\ \text{and}\ \phi_{B'}(a_{ki})P_{u-1}(-x)\] in the $i,k,j$ spots, respectively (and 0 elsewhere).

To prove the lemma then, it suffices to show there is a choice for $e(x)=x_0\in\bb C$ that makes $P_u(x_0)=(-1)^{u+v-1}, P_{u-1}(x_0)=0, P_{u-2}(-x_0)=(-1)^{v-1}$ and $\phi_{B'}(a_{ki})\vert_{x=x_0}=(-1)^{u+v-1}a_{ki}$ for $i<k<j$ (recall that $P_{u-1}$ is either an odd or even function of $x$). 

As $u-1>0$ there is $x_0$ for which $P_{u-1}(x_0)=0$. Choose $e(x)=x_0$ and $e(a_{kl})=a_{kl}$ for $a_{kl}\not\in X$. Then only the $i^{th}$ column of $e(\Phi_{B'}^L)$ can possibly have non-zero off-diagonal entries. Hence $\det e(\Phi_{B'}^L)=P_u(x_0)P_{u-2}(-x_0)$, as other diagonal entries are 1. In the proof of Theorem \ref{ThmWrithe} we saw that this determinant is $(-1)^{w(B')}=(-1)^u$. Moreover, from equation (\ref{EqnRecurrP_k}), $P_u(x_0) = P_{u-2}(x_0) = (-1)^{u-2}P_{u-2}(-x_0)$. Putting this all together requires that $P_{u-2}(-x_0)=\pm1$.

If $u$ is odd then $P_{u-2}$ is an odd function, so we can guarantee $P_{u-2}(-x_0)=(-1)^{v-1}$ by swapping $x_0$ for $-x_0$ if needed. As $P_{u-1}$ is even it remains zero, and we have $P_u(x_0)=(-1)^{u-2}P_{u-2}(-x_0)=(-1)^{u+v-1}$. If $u+v-1$ is even we need only consider $u$ even. In this case take $x_0=0$ and this makes $P_u(x_0)=1=P_{u-2}(-x_0)$ as needed.

Having determined the diagonal, consider $e(\phi_{B'}(a_{ki}))$. Since $k>i$, Theorem \ref{ThmPhiSqRt} implies that $-\mu e(\phi_{B'}(a_{ki}))$ equals the $(k,i)$-entry of $e(\Phi_{B'}^L\cdot{\bf A}\cdot\Phi_{B'}^R)$. By calculations above, row $k$ of $e(\Phi_{B'}^L)$ has at most two non-zero entries and column $i$ of $e(\Phi_{B'}^R)$ has one, since row $i$ of $e(\Phi_{B'}^L)$ does. So the $(k,i)$-entry in question is $(-1)^{u+v-1}((e(\phi_{B'}(a_{ki}))P_u(x_0)-a_{ki})(1-\mu)-\mu a_{ki})$. Equating this with $-\mu e(\phi_{B'}(a_{ki}))$ and using that $P_u(x_0)=(-1)^{u+v-1}$ we find that, as required, $e(\phi_{B'}(a_{ki})) = (-1)^{u+v-1}a_{ki}$.
\end{proof}

\begin{lem}
Consider $B,B'\in B_n$ included into $B_{n+1}$ so the last strand does not interact and with $\Phi_B^L$ and $\Phi_{B'}^L$ as $(n+1)\times(n+1)$ matrices. Let $b=B\sg_n^{-1}B'\sg_n^v$ for some $v\ge 3$. Suppose there exists a map $e:\cl A_{n+1}^{ab}\to\bb C[X]$ where $e(a_{n,n+1})=e(a_{n+1,n})=X$, with the properties
\begin{itemize}
	\item[(a)]$e(\Phi_B^L)=\Delta(B)$;
	\item[(b)]$e(\phi_{B\sg_n^{-1}}(\Phi_{B'}^L)) = \text{diag}[(-1)^{w(B')+v-1},1\ldots,1,(-1)^{v-1},1]$.
\end{itemize}
Then there is a map $\bar e:\bb C[X]\to\bb C$ such that $\bar e\circ e(\Phi_{b}^L)=\Delta(b)$.
\label{LemFlype}
\end{lem}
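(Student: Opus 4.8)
The plan is to combine the Chain Rule (Theorem~\ref{ThmChainRule}) with the two hypotheses. Writing $b=B\cdot\sg_n^{-1}\cdot B'\cdot\sg_n^v$ and iterating Theorem~\ref{ThmChainRule} gives
\[
\Phi_b^L=\phi_{B\sg_n^{-1}B'}(\Phi_{\sg_n^v}^L)\cdot\phi_{B\sg_n^{-1}}(\Phi_{B'}^L)\cdot\phi_B(\Phi_{\sg_n^{-1}}^L)\cdot\Phi_B^L .
\]
Because $e$ is a ring homomorphism, $e(\Phi_b^L)$ is the product of the $e$-images of these four matrices. The rightmost image is $\Delta(B)$ by hypothesis (a), and the second is the named diagonal matrix by hypothesis (b); so the work is to compute the remaining two images and then multiply out, finally choosing $\bar e$ to clean up the result.

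First I would record the elementary blocks. Viewing $\sg_n=\sg_{n,n+1}$ as adjacent, Lemma~\ref{LemBasePhis} shows $\Phi_{\sg_n^v}^L$ is the identity away from rows and columns $n,n+1$, where it is $\left(\begin{smallmatrix}P_v(x)&P_{v-1}(-x)\\ P_{v-1}(x)&P_{v-2}(-x)\end{smallmatrix}\right)$ with $x=a_{n,n+1}$, while $\Phi_{\sg_n^{-1}}^L$ has block $\left(\begin{smallmatrix}0&1\\1&-a_{n,n+1}\end{smallmatrix}\right)$. Hypothesis (a) gives $e((\Phi_B^L)_{nj})=\delta_{nj}$, and Proposition~\ref{Prop:Transpose} (the involution is trivial on $\cl A_{n+1}^{ab}$) gives $e((\Phi_B^R)_{jn})=\delta_{nj}$; since $B\in B_n$ acts on $a_{i,n+1}$ through $\Phi_B^L$ and on $a_{n+1,i}$ through $\Phi_B^R$, these force $e(\phi_B(a_{n,n+1}))=e(\phi_B(a_{n+1,n}))=X$, so $e(\phi_B(\Phi_{\sg_n^{-1}}^L))$ has block $\left(\begin{smallmatrix}0&1\\1&-X\end{smallmatrix}\right)$. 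The crux is then the scalar $\xi:=e(\phi_{B\sg_n^{-1}B'}(a_{n,n+1}))$: expanding $\phi_{B'}(a_{n,n+1})=\sum_j(\Phi_{B'}^L)_{nj}\,a_{j,n+1}$ and applying $e\circ\phi_{B\sg_n^{-1}}$, hypothesis (b) collapses the sum (its $(n,j)$ coefficient is $(-1)^{v-1}\delta_{nj}$), leaving $\xi=(-1)^{v-1}e(\phi_{B\sg_n^{-1}}(a_{n,n+1}))$. As $\phi_{\sg_n^{-1}}(a_{n,n+1})=-a_{n+1,n}$ and $e(\phi_B(a_{n+1,n}))=X$, I obtain $\xi=(-1)^vX$. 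Since $e\circ\phi_{B\sg_n^{-1}B'}$ is a ring homomorphism and the block of $\Phi_{\sg_n^v}^L$ consists of polynomials in $a_{n,n+1}$, the first image is the identity away from the block $\left(\begin{smallmatrix}P_v(\xi)&P_{v-1}(-\xi)\\ P_{v-1}(\xi)&P_{v-2}(-\xi)\end{smallmatrix}\right)$.

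Multiplying the four images, $e(\Phi_b^L)$ has $(1,1)$-entry $(-1)^{w(b)}$ with $w(b)=w(B)+w(B')+v-1$, equals the identity outside rows and columns $\{1,n,n+1\}$, and in the $n,n+1$ block equals
\[
\begin{pmatrix}P_{v-1}(-\xi)&(-1)^{v-1}P_v(\xi)-XP_{v-1}(-\xi)\\ P_{v-2}(-\xi)&(-1)^{v-1}P_{v-1}(\xi)-XP_{v-2}(-\xi)\end{pmatrix}.
\]
Thus it remains to pick $\bar e\colon X\mapsto X_0$ making this block $\text{Id}_2$. Put $t=\xi(X_0)=(-1)^vX_0$. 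Since $v\ge3$ forces $\deg P_{v-2}=v-2\ge1$, I can choose $X_0$ with $P_{v-2}(t)=0$, and parity of $P_{v-2}$ then gives $P_{v-2}(-\xi)=0$, making the block upper triangular with both diagonal entries equal to $(-1)^{v-1}P_{v-1}(t)$. A determinant computation (the $X$-terms cancel and $\det e(\Phi_b^L)=(-1)^{w(b)}$ forces the block determinant to be $1$) yields the identity $P_{v-1}(-t)P_{v-1}(t)-P_{v-2}(-t)P_v(t)=(-1)^{v-1}$, so $P_{v-1}(t)^2=1$ once $P_{v-2}(t)=0$. Using the parity of $P_{v-1}$ and that the roots of $P_{v-2}$ come in pairs $\pm t$ (with $t=0$ a root and $P_{v-1}(0)=1$ available when $v$ is odd), I select the root with $P_{v-1}(t)=(-1)^{v-1}$; the recurrence (\ref{EqnRecurrP_k}) then forces the upper-right entry to vanish, so the block is $\text{Id}_2$ and $\bar e\circ e(\Phi_b^L)=\Delta(b)$.

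The main obstacle is the evaluation of $\xi$: it is the one place where (a) and (b) must be fed simultaneously through Proposition~\ref{Prop:Transpose} to collapse the nested sums, and a sign slip there corrupts everything downstream. The secondary difficulty is the final choice of root, which mirrors the sign-fixing argument in the proof of Lemma~\ref{LemBasePhis} and is precisely where the parities of the $P_k$, and hence the hypothesis $v\ge3$, are essential.
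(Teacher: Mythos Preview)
Your proof is correct and follows the paper's approach: decompose $\Phi_b^L$ via the Chain Rule, identify the scalar $\xi=e(\phi_{B\sg_n^{-1}B'}(a_{n,n+1}))=(-1)^vX$, and then specialize $X$ to collapse the $(n,n+1)$-block to the identity. Two small differences are worth flagging. First, you compute $\xi$ by expanding $\phi_{B'}(a_{n,n+1})$ through the $n$th row of $\Phi_{B'}^L$ and invoking hypothesis~(b) entrywise; the paper instead reads off the $(n,n+1)$-entry of $\phi_{b'}({\bf A})=\Phi_{b'}^L\,{\bf A}\,\Phi_{b'}^R$ via Theorem~\ref{ThmPhiSqRt}. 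Your route is a touch more elementary since it avoids the square-root identity. Second, in selecting $X_0$ you first force $P_{v-2}(t)=0$, use the determinant identity to get $P_{v-1}(t)^2=1$, and then fix the sign by parity; the paper instead appeals to the existence of a solution to $\Phi_{\sg_n^{v-1}}^L=\Delta(\sg_n^{v-1})$ (implicitly Lemma~\ref{LemBasePhis}) to assert both conditions simultaneously. Your argument here is self-contained and arguably cleaner. One small point you leave implicit: the block of $\Phi_{\sg_n^v}^L$ involves both $a_{n,n+1}$ and $a_{n+1,n}$ before passing to $\cl A_{n+1}^{ab}$, so you are using that $\phi_B$ descends to $\cl A_{n+1}^{ab}$ (equivalently, that $\phi_B$ commutes with the bar involution); this is routine but worth a sentence.
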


\begin{proof}
We treat the case when $v$ is positive first. The $n^{th}$ row of $e(\Phi_B^L)$ being $(0,\ldots,0,1,0)$ implies $e(\phi_B(a_{n,n+1})) = X$. By the Chain Rule we have
{\small
\[e\left(\Phi_{B\sg_n^{-1}}^L\right) = \begin{pmatrix}\text{Id}_{n-1}&&\\ &0&1\\ &1&-X\end{pmatrix}\Delta(B) = \begin{pmatrix}(-1)^{w(B)}&&&\\ &\text{Id}_{n-2} && \\ && 0&1\\ &&1 & -X\end{pmatrix}.\]
}

By property (b), the Chain Rule, and $w(b)=w(B)+w(B')+v-1$ we get,
{\small
\[e\left(\Phi_{B\sg_n^{-1}B'}^L\right) = e\left(\phi_{B\sg_n^{-1}}(\Phi_{B'}^L)\cdot\Phi_{B\sg_n^{-1}}^L\right) =\begin{pmatrix}(-1)^{w(b)}&&&\\ &\text{Id}_{n-2} && \\ && 0&(-1)^{v-1}\\ &&1&-X\end{pmatrix}.\]
}

If we show that $\bar e(X)$ may be chosen so that 
{\small
\[\bar e\circ e\left(\phi_{B\sg_n^{-1}B'}(\Phi_{\sg_n^v}^L)\right) = \begin{pmatrix}\text{Id}_{n-1} &&\\ &(-1)^{v-1}\bar e(X) & 1\\ & (-1)^{v-1} & 0\end{pmatrix},\]
}

then applying the Chain Rule again will prove the claim. First note that $\sg_{n,n+1}=\sg_n$, so upon setting $a_{n+1,n}=a_{n,n+1}=x$ the proof of Lemma \ref{LemBasePhis} implies that 
{\small
\begin{equation}\Phi_{\sg_n^v}^L= \begin{pmatrix}\text{Id}_{n-1} &&\\ &P_v(x) & P_{v-1}(-x)\\ & P_{v-1}(x) & P_{v-2}(-x)\end{pmatrix}.
\label{EqnTwistPhiMatrix}
\end{equation}
}

To understand $e(\phi_{B\sg_n^{-1}B'}(a_{n,n+1}))$ we use Theorem \ref{ThmPhiSqRt} to make a calculation similar to the one at the end of the proof of Lemma \ref{LemBasePhis}. Set $b'=B\sg_n^{-1}B'$ and consider the equation $\phi_{b'}({\bf A}) = \Phi_{b'}^L{\bf A}\Phi_{b'}^R$. With our knowledge of $e(\Phi_{b'}^L)$ and Proposition \ref{Prop:Transpose} we find that
			\[e(\phi_{b'}(a_{n,n+1})) = (-1)^{v-1}(-\mu e(a_{n+1,n})-(1-\mu)e(a_{n,n+1}))=(-1)^{v}X.\]

Recalling that $P_v$ is odd or even as $v$ is odd or even, this implies that $e\left(\phi_{b'}(\Phi_{\sg_n^v}^L)\right)$ is the matrix in (\ref{EqnTwistPhiMatrix}) after substituting $P_{v-1}(\pm x)\mapsto P_{v-1}(\pm X)$, $P_v(x)\mapsto(-1)^vP_v(X)$, and $P_{v-2}(-x)\mapsto(-1)^vP_{v-2}(-X)$. 

As $v-1>0$, there is a choice $\bar e(X)=X_0\in\bb C$ so that $P_{v-1}(X_0)= (-1)^{v-1}$. Now $\Phi_{\sg_n^{v-1}}^L=\Delta(\sg_n^{v-1})$ has a solution when $v\ge3$, so $X_0$ may be chosen so that $P_{v-2}(X_0)=0$. Then $(-1)^vP_v(X_0)=(-1)^{v-1}X_0(-1)^{v-1}P_{v-1}(X_0)=(-1)^{v-1}X_0$ by (\ref{EqnRecurrP_k}).

Finishing the proof, if we extend $\bar e$ to $\bb C[X]$ then
{\small
	\al{
	\bar e\circ e(\Phi_b^L)	&= \bar e\circ e\left(\phi_{B\sg_n^{-1}B'}(\Phi_{\sg_n^v}^L)\right)\cdot\bar e\circ e\left(\Phi_{B\sg_n^{-1}B'}^L\right)\\
						&= \begin{pmatrix}\text{Id}_{n-1} &&\\ &(-1)^{v-1}\bar e(X) & 1\\ & (-1)^{v-1} & 0\end{pmatrix}\begin{pmatrix}(-1)^{w(b)}&&&\\ &\text{Id}_{n-2} && \\ && 0&(-1)^{v-1}\\ &&1&-\bar e(X)\end{pmatrix} = \Delta(b).
	}
}

The only difference when $v\le -3$ is that the switching of $i$ and $j$ in the proof of Lemma \ref{LemBasePhis} means the matrix in (\ref{EqnTwistPhiMatrix}) should instead be
{\small
\begin{equation}\Phi_{\sg_n^v}^L= \begin{pmatrix}\text{Id}_{n-1} &&\\ &P_{-v+2}(-x) & P_{-v+1}(x)\\ & P_{-v+1}(-x) & P_{-v}(x)\end{pmatrix}.
\label{EqnTwistPhiMatrixneg}
\end{equation}
}

As $v\le -3$ it is possible to choose $X_0$ so that $P_{-v+1}(-X_0)=(-1)^{v-1}$ and $P_{-v}(X_0)=0$ as in the positive case.
\end{proof}

\begin{proof}[Proof of Theorem \ref{ThmBuildingRknAugs}]

Lemmas \ref{LemBasePhis} and \ref{LemFlype} handle much of the work. Suppose that $\epsilon:\cl A_n^{ab}\to\bb C$ determines a rank $n$ augmentation on $HC_0(K)\vert_{U=1}$, and $K$ is the closure of $B\in B_n$. Define $B'=\sg_{i,n}^u$. We compare $\epsilon(\phi_{B\sg_n^{-1}}(\Phi_{B'}^L))$ to $\epsilon(\Phi_{B'}^L)$ with an argument like that used in Lemma \ref{LemFlype}.

By calculations made in the proof of Lemma \ref{LemBasePhis}, along with the fact that $\phi_{\sg_{i,n}}(a_{kn}) = a_{ki} - a_{kn}a_{ni}$ (and the analgous identity for $\phi_{\sg_{i,n}}(a_{nk})$), the matrix $\Phi_{B'}^L$ has entries involving only $a_{ik}$, $a_{ki}$, $a_{kn},a_{nk}$ and $a_{in},a_{ni}$, for $i<k<n$. 

We note that for any $j<n$, \al{\phi_{\sg_n^{-1}}(a_{jn}) &= a_{j,n+1},\ \phi_{\sg_n^{-1}}(a_{nj}) = a_{n+1,j}\\ \text{ and }\quad\phi_{\sg_n^{-1}}(a_{ki}) &= a_{ki},\ \phi_{\sg_n^{-1}}(a_{ik})=a_{ik}.} 
Extend $\epsilon$ to $\cl A_{n+1}^{ab}$ so that $\epsilon(a_{j,n+1})=a_{j,n+1}$ for $1\le j\le n$. The matrices $\Phi_B^L$ and $\Phi_B^R$, by definition, record the image under $\phi_B$ of $a_{j,n+1}$ and $a_{n+1,j}$ respectively. We observed in Theorem \ref{ThmWrithe} that $\epsilon(\Phi_B^L)=\Delta(B)=\epsilon(\Phi_B^R)$. Moreover, we note that $\epsilon(\phi_B(a_{ik}))$ is either $a_{ik}$ or $(-1)^{w(B)}a_{ik}$ according to whether $i>1$ or $i=1$, since $\epsilon(\phi_B({\bf A})) = \epsilon({\bf\Lambda^{-1}}\cdot{\bf A}\cdot{\bf \Lambda})$. A similar statement holds for $a_{ki}$.

If $i>1$ and $u+k-1$ is even, then the previous paragraph and Theorem \ref{ThmPhiSqRt} imply that $\epsilon(\phi_{B\sg_n^{-1}}(\Phi_{B'}^L))$ is the matrix $\Phi_{B'}^L$, but with $a_{jn}$ (resp. $a_{nj}$) replaced by $a_{j,n+1}$ (resp. $a_{n+1,j}$) for $j=i,k$. Now Lemma \ref{LemBasePhis} gives a map $e:\cl A_{n}^{ab}\to\cl A_{n}^{ab}\otimes\bb C$ such that {\small \[e(\Phi_{B'}^L)=\text{diag}[\text{Id}_{i-1},(-1)^{u+v-1},\text{Id}_{n-i-1},(-1)^{v-1}]=\text{diag}[(-1)^{u+v-1},\text{Id}_{n-2},(-1)^{v-1}]\]} as $u+v-1$ is even. 

Define $\bar e$ on $\cl A_{n+1}^{ab}$ so that $\bar e(a_{j,n+1})=e(a_{j,n})$ for $j=i,k$, and $\bar e(a_{n,n+1})=X$. Now define $\bar\epsilon=\bar e\circ \epsilon:\cl A_{n+1}^{ab}\to\bb C[X]$. Then by Lemma \ref{LemFlype}, since $\bar\epsilon$ satisfies properties (a) and (b), it determines a rank $n+1$ augmentation for the closure of $B\sg_n^{-1}B'\sg_n^v$.

If $i=1$ and $u$ is odd then $\epsilon(\phi_{B\sg_n^{-1}}(\Phi_{B'}^L))$ is $\Phi_{B'}^L$ but with $a_{1n}$ (resp. $a_{n1}$) replaced by $(-1)^{w(B)}a_{1,n+1}$ (resp. $(-1)^{w(B)}a_{n+1,1}$). The fact that the polynomials $P_u$, $P_{u-2}$ on the diagonal of $\Phi_{B'}^L$ are odd functions, along with the relation in (\ref{EqnRecurrP_k}), allows us to choose the sign of $P_u(\bar e(x))$, with $P_{u-2}(-\bar e(x))$ having opposite sign (in similar fashion to the proof of Lemma \ref{LemBasePhis}). This means that we may find $\bar\epsilon$, in similar fashion to the case $i>1$, satisfying the hypotheses of Lemma \ref{LemFlype}.

This proves the theorem for the case that $\delta=-1$. When $\delta=1$ the closure of $B\sg_n^\delta B'\sg_n^v$ is the mirror of a knot $K$ for which the theorem is already proved. By \cite[Prop.\ 4.2]{Ng08}, the mirror of $K$ has a rank $n+1$ augmentation if $K$ has a rank $n+1$ augmentation. This finishes the proof.
\end{proof}

\vspace*{6pt}
\noindent{\bf Corollary \ref{ThmFlypes}.}\ \ 
{\it If $\abs u, \abs w\ge 2$, $\abs v\ge 3$, and $\delta=\pm1$ and a knot $K$ is the closure of $b=\sg_1^w\sg_2^{\delta}\sg_1^u\sg_2^v$, then the closure of $V_K$ contains a curve of rank 3 augmentations.}
\vspace*{6pt}

\begin{proof} If $u$ is odd then this follows from Theorem \ref{ThmBuildingRknAugs} by taking $n=2$ and $i=1$. If $u$ is even then $w$ must be odd as $K$ is a knot. There is either a positive or negative flype (according to the sign of $\delta$) taking $b$ to the braid $\sg_1^u\sg_2^{\delta}\sg_1^w\sg_2^v$, which also has closure $K$. Now apply Theorem \ref{ThmBuildingRknAugs} to this braid.
\end{proof}

\address{Department of Mathematics, Towson University\\ 
8000 York Road, Towson, MD 21252, USA\\
\email{ccornwell@towson.edu}\\


\begin{thebibliography}{99}

\bibitem{AENV}
M.~Aganagic, T.~Ekholm, L.~Ng and C. Vafa,
{\it Topological strings, {D}-model, and knot contact homology},
Adv.\ Theor.\ Math.\ Phys.\ {\bf 18} (2014), 827--956.

\bibitem{BZ05}
S.~Boyer and X.~Zhang,
{\it Every nontrivial knot in ${S}^3$ has nontrivial ${A}$-polynomial},
Proc.\ Amer.\ Math.\ Soc.\ {\bf 133} (2005), 2813--2815.

\bibitem{BZ89}
M.~Boileau and B.~Zimmermann,
{\it The $\pi$-orbifold group of a link},
Math.\ Z.\ {\bf 200} (1989), 187--208.

\bibitem{knot-info}
J.C.~Cha and C.~Livingston,
{\it \emph{Knot{I}nfo}: {T}able of {K}not {I}nvariants},
\href{http://www.indiana.edu/~knotinfo}{http://www.indiana.edu/~knotinfo}, November 30, 2015.

\bibitem{CELN}
K.~Cieliebak, T.~Ekholm, J.~Latschev and L.~Ng,
{\it Knot contact homology, string topology, and the cord algebra},
2016, arXiv:1601.02167, 107 pages.

\bibitem{Cohn95}
P.M.~Cohn,
{\it Skew fields: Theory of general division rings},
Cambridge University Press, {\bf 57}, 1995.

\bibitem{C12}
C.R.~Cornwell,
{\it Knot contact homology and representations of knot groups},
J.\ Topology {\bf 7} (2014), 1221--1242.

\bibitem{DGa04}
N.~Dunfield and S.~Garoufalidis,
{\it Non-triviality of the {$A$}-polynomial for knots in {$S^3$}},
Algebr.\ Geom.\ Topol.\ {\bf 4} (2004), 1145--1153 (electronic).

\bibitem{DP12}
I.A.~Dynnikov and M.V.~Prasolov,
{\it Bypasses for rectangular diagrams. {A} proof of the {J}ones conjecture and related questions},
Trans.\ Moscow Math.\ Soc.\ (2013), 97--144. 

\bibitem{EENS12}
T.~Ekholm, J.~Etnyre, L.~Ng and M.~Sullivan,
{\it Knot contact homology},
Geom.\ Topol.\ {\bf 17} (2013), 975--1112.

\bibitem{EES07}
T.~Ekholm, J.~Etnyre and M. Sullivan,
{\it Legendrian contact homology in ${P}\times\rls$},
Trans.\ Amer.\ Math.\ Soc.\ {\bf 359} (2007), 3301--3335.

\bibitem{Eli98}
Y.~Eliashberg,
{\it Invariants in contact topology},
Documenta Mathematica, Extra Vol.\ II, Berlin, 1998, pp.~327--338.

\bibitem{HR85}
C.~Hodgson and J.H.~Rubinstein,
{\it Knot Theory and Manifolds},
Lecture Notes in Math.\ {\bf 1144}, Springer, Heidelberg, 1985, pp.~60--96.

\bibitem{Kir95}
R.~Kirby,
{\it Problems in low dimensional topology},
Geometric Topology, Amer.\ Math.\ Soc., Providence, RI, 1997, pp.~35--473.

\bibitem{KL99}
K.H.~Ko and S.J.~Lee,
{\it Flypes of closed {$3$}-braids in the standard contact space},
J.\ Korean Math.\ Soc.\ {\bf 36} (1999), 51--71.

\bibitem{LM12}
D.J.~LaFountain and W.W.~Menasco,
{\it Embedded annuli and {J}ones' conjecture},
Algebr.\ Geom.\ Topol.\ {\bf 14} (2014), 3589--3601.

\bibitem{Ng05}
L.~Ng,
{\it Knot and braid invariants from contact homology {I}},
Geom.\ Topol. {\bf 9} (2005), 247--297.

\bibitem{Ng05b}
L.~Ng,
{\it Knot and braid invariants from contact homology. {II}},
with an appendix by author and Siddhartha Gadgil,
Geom.\ Topol.\ {\bf 9} (2005), 1603--1637 (electronic).

\bibitem{Ng08}
L.~Ng,
{\it Framed knot contact homology},
Duke Math.\ J.\ {\bf 141} (2008), 365--406.

\bibitem{Ng11HT}
L.~Ng,
{\it Combinatorial knot contact homology and transverse knots},
Adv.\ Math.\ {\bf 227} (2011), 2189--2219.

\bibitem{NgSurv12}
L.~Ng,
{\it A Topological Introduction to Knot Contact Homology},
Contact and Symplectic Topology, Bolyai Society Mathematical Studies {\bf 26}, Springer International, 2014, pp.~485--530.

\end{thebibliography}
\end{document}